\documentclass{article}
\usepackage{amsfonts}
\usepackage{amsmath} 
\usepackage{amssymb}
\usepackage{graphicx}
\usepackage{amsthm}
\usepackage{mathtools}
\usepackage{bbm}
\usepackage{mathrsfs}
\usepackage{hyperref}

\usepackage{dirtytalk}
\usepackage{fullpage}
\usepackage{textgreek}
\usepackage{tikz-cd}
\usepackage{shuffle}
\usepackage{xcolor}
\usepackage{euscript}
\usepackage{ebgaramond}
\usepackage{scalerel}
\usepackage[T1]{fontenc}
\usepackage[ttdefault=true]{AnonymousPro}

\DeclareFontFamily{U}{bigshuffle}{}
\DeclareFontShape{U}{bigshuffle}{m}{n}{
<5-8> s*[1.6] shuffle7
<8->  s*[1.4] shuffle10
}{}
\DeclareSymbolFont{BigShuffle}{U}{bigshuffle}{m}{n}
\DeclareMathSymbol\bigshuffle{\mathop}{BigShuffle}{"001}
\DeclareMathSymbol\bigcshuffle{\mathop}{BigShuffle}{"002}

\def\d{\mathrm{d}}
\newcommand{\var}[1]{{#1 \mathchar"2D \mathrm{var}}}

\def\eps{\varepsilon}
\newcommand{\bX}{\boldsymbol{X}}

\def\Sig{\EuScript{S}}

\def\p{{\lfloor p \rfloor}}

\def\bbR{\mathbb{R}}
\def\cE{\mathcal{E}}
\def\cM{\mathcal{M}}

\newcommand{\vertiii}[1]{{\left\vert\kern-0.25ex\left\vert\kern-0.25ex\left\vert #1 
\right\vert\kern-0.25ex\right\vert\kern-0.25ex\right\vert}}

\numberwithin{equation}{section}
\usepackage{thmtools}
\declaretheorem[numberwithin=section]{theorem}
\declaretheorem[sibling=theorem]{proposition}
\declaretheorem[sibling=theorem]{lemma}

\theoremstyle{definition}

\declaretheorem[sibling=theorem]{definition}
\declaretheorem[sibling=theorem]{problem}
\theoremstyle{remark}
\declaretheorem[sibling=theorem]{remark}

\title{Concise $(\eps,r)$-representations of a path}

\author{Emilio Ferrucci\thanks{SISSA, Trieste; email: \href{mailto:emilio.ferrucci@sissa.it}{emilio.ferrucci@sissa.it}} \and Oliver Perrée\thanks{Work carried out while at the University of Oxford; email: \href{mailto:oliverperree@gmail.com}{oliverperree@gmail.com}} \and Terry Lyons\thanks{University of Oxford and Imperial College London; email: \href{mailto:terry.lyons@maths.ox.ac}{terry.lyons@maths.ox.ac.uk}}}

\usepackage{euscript}
\allowdisplaybreaks[2]

\begin{document}

\maketitle

\begin{abstract}
Paths $X \colon [0,T] \to \mathbb R^d$ are traditionally stored in finite memory as time series. Recent research has underscored the benefits of instead representing them as collections of iterated integrals $\{\int_{0 < u_1 < \ldots < u_n < T} \d X_{u_1} \otimes \cdots \otimes \d X_{u_n}\}_{n = 0}^N$. These two encodings can be viewed as the extrema on a two-parameter spectrum of representations of the path as degree-$N$ signatures on $m$ intervals in a partition of $[0,T]$. We ask the question of which such representation takes up the least amount of memory, measured as number of real values needed to store the truncated log-signature, subject to the constraint of it being able to approximate solutions to linear controlled differential equations (CDEs) $\d Y = AY\d X$ with $|A| \leq r$ at accuracy at least $\eps$. Estimating the error in terms of the length of $X$, we find that the optimal representation generally lies strictly in between the two naive choices $N = 1$ or $m = 1$, and derive its asymptotics as $r \to \infty$ and $\eps \to 0^+$. Similar considerations can be made when estimating the error in terms of the $p$-variation norm of $X$: in this regime we prove an error bound of the degree-$N$ Euler scheme for linear CDEs with decay in both $m$ and (factorially) in $N$ with the other arbitrarily fixed. We conclude by setting up the analogous problem for SDEs, with the error measured in $L^2$, and derive a similar $L^2$-Euler error estimate for It\^o SDEs with drift. We include an empirical study of the optimisation problem, which we demonstrate for toy examples of $p$-rough paths and for fractional Brownian motion.
\end{abstract}

\section*{Introduction}

Sequential data is ubiquitous and generated at an ever increasing rate. In this paper, we consider the problem of how to represent such data so that it takes up the least amount of memory, while still requiring it to be sufficiently expressive. In order to state this problem precisely, we must describe how we are treating data and memory, which we call \emph{storage}, as well as what type of problems we wish to solve, and what it means to solve them in a satisfactory manner.

Beginning with the former, we regard a sequential data point to be a continuous \emph{path} $X \colon [0,T] \to \mathbb R^d$. The usual way of finitely representing $X$ is as a time series $(X_{t_0}, \ldots, X_{t_m})$ with $0 \leq t_0 < \ldots < t_m \leq T$ (usually $t_0 = 0$ and $t_m = T$). For signals that are too oscillatory, however, it has been understood since \cite{Lyo98} that such a description is inadequate: more specifically, unless $X$ is of finite $2 > p$-variation, there is no canonical meaning of the \emph{controlled differential equation} (CDE) $\d Y = V(Y)\d X$. The notion that is instead rich enough to define the meaning of what it means to solve such an equation is that of a \emph{rough path} $\bX$, i.e.\ a collection of tensors indexed by $0 \leq s < t \leq T$ whose levels intuitively represent Chen's \cite{Chen54} iterated integrals
\begin{equation}\label{eq:rp}
\bX_{s,t}^n  \text{\say{$=$}} \int_{s < u_1 < \ldots < u_n < t} \d X_{u_1} \otimes \cdots \otimes \d X_{u_n} \quad\in (\mathbb R^d)^{\otimes n};
\end{equation}
here the equality is meant as a definition (in a precise sense) since these integrals are generally not well defined. If $X$ is of finite $p$-variation, generally $\p$ such levels are required to give a unique meaning to CDEs, and there exist corresponding degree-$\p$ (or higher) numerical schemes converging to the solution.

There is no reason to stop at the first $n = \p$ for which $X$ has finite $p$-variation, and one may consider the \emph{signature} $\Sig(X)$, i.e.\ the collection of iterated integrals \eqref{eq:rp} for $n \in \mathbb N$, truncated at arbitrarily high order. Even on the single interval $[s,t] = [0,T]$, this has the property of characterising the path up to \say{retracings}, see \cite{chen_integration_1958}, \cite{HL10} and \cite{siguniqueness}. Moreover, the Stone-Weierstrass theorem implies that functions can be approximated \emph{linearly} in terms of the signature: roughly speaking, for any continuous function $F$ on a compact set in an appropriate topology of (rough) paths $K$ and any $\varepsilon > 0$ there exists a truncation level $N$ and a linear function $\lambda_N$ of $\Sig_N(X)_{0,T}$ (the signature truncated at degree $N$, see \eqref{eq:sig} below), such that 
\begin{equation}\label{eq:SW}
\sup_{X \in K}|F(X) - \langle \lambda_N, \Sig_N(X)_{0,T} \rangle| < \varepsilon,
\end{equation}
see e.g.\ \cite[Proposition 1]{Ferm21}. A growing body of literature initiated in the research program \cite{Lyo14} uses these facts as motivation to consider the signature as a feature set for sequential data in machine learning. However, \eqref{eq:SW} has the drawback of not being quantitative (which would be difficult without being more specific about $F$), and there are well-known instances \cite{Tref23} of a \say{universal approximation} theorem guaranteeing a dense span while \say{the associated expansions are so inefficient as to have no conceivable relevance to any actual computation} (the example in question concerns monomials of even degree in $C[0,1]$). Very recently this shortcoming has been addressed in \cite{horvath} in the context of statistical learning.

While the motivation behind the signature method in machine learning hinges on universality in the degree of the signature $N \to \infty$, the literature on numerical methods (e.g.\ \cite{KP10}) is instead focused on low degree signatures and proving convergence in the number of intervals $m \to \infty$ (but see also \cite{MFKL21} for an empirical evaluation of the signature method over multiple intervals). In this paper we interpolate between the two and represent our data as collections of degree-$N$ signatures on $m$ sub-intervals. Assuming the original data $X$ was discretised on a much finer grid, the encoding \eqref{eq:msigs} can be viewed as a form of lossy compression as long as $N$ is not too large. In fact, the signatures $\Sig_N(X)_{t_{k-1},t_k}$ can be replaced with their corresponding log-signatures, free Lie algebra-valued elements that quotient out the integration-by-parts relations present in $\Sig(X)$, resulting in mild but lossless gains. The encoding then becomes
\begin{equation}\label{eq:msigs}
X \ \longleftarrow \ (\log\Sig_N(X)_{t_0,t_1}, \ldots, \log\Sig_N(X)_{t_{m-1}, t_m}).
\end{equation}
Storage cost is computed naively as the number of reals that need to be stored in memory. While this is how it is usually defined in other articles, e.g.\ \cite{Fef09}, it leaves out the important question how information is further lost to floating point arithmetic (and how these errors propagate).

Since the \eqref{eq:msigs} tends to become fully descriptive as either $N$ or $m$ approaches $\infty$, we must agree on a class of problems to solve in order to trade $N$ optimally off against $m$: we consider \emph{linear CDEs}
\begin{equation}\label{eq:linearCDE}
\d Y = A Y \d X,
\end{equation}
which figure prominently in state space models like Mamba \cite{mamba}, see \cite{COWSL24, walker2025structured, walkerPhD} for the link with rough path theory. Linear CDEs are not all equivalent in complexity, and in fact it is natural to parametrise them in terms of the tensor norm $|A|$. In order to state the problem, it is necessary to only consider problems \eqref{eq:linearCDE} with $|A| \leq r$ for some $r$ (it is also necessary set a bound on the length of $X$ or proxy thereof, which we omit in this preliminary description of the problem). Finally, since it is impossible to solve any equation exactly, we must agree on an acceptable tolerance $\varepsilon$.

\begin{problem}[informal]\label{prob}
Find a choice of $(N, m)$ that makes the number of scalars in the encoding \eqref{eq:msigs} as low as possible, while being sufficiently rich to solve \eqref{eq:linearCDE} for all $A$ of norm bounded by $r$ at accuracy at least $\varepsilon$.
\end{problem}

Approximations will all be carried out in terms of degree-$N$ Euler schemes, see \cite[Ch.\ 10]{FV10}. We will call an encoding that satisfies such a property $(\varepsilon, r)$-close to the original $X$. This notion is closely related to the idea of Kolmogorov $\varepsilon$-entropy/complexity \cite{Kol56, KT59, Kol83, DP13}.

We would also like to mention the related work \cite{BBL23}. Here the authors consider the similar problem of deciding whether to refine the partition or raise the log-signature degree, in the context of an adaptive log-ODE solver and with the objective of reducing computational work. Their algorithm is adaptive and problem-dependent (e.g.\ the solver can pick different log-signature degrees $N$ over different intervals, something not considered here), and works for non-linear RDEs. We instead seek representations of paths that work uniformly over the simpler class of norm-bounded linear equations. This allows us to study the tradeoff between partition size and truncation degree independently of any particular equation, and to focus on its asymptotic behaviour as well as on special features of numerical schemes for linear equations.

Our contributions are as follows. In \autoref{sec:1var} we treat \autoref{prob} while measuring errors in terms of the length ($1$-variation) of $X$. In this case, sharp estimates for the Euler error are easy to obtain and continuously relaxing $N, m$ results in a constrained optimisation problem which we prove has a unique solution. In so doing, we characterise the asymptotic behaviour of the dimension of the free Lie algebra over $d$ generators, i.e.\ the storage taken up by a single log-signature. We study the constrained optimisation problem in the asymptotic regimes of $\varepsilon \to 0^+$ and $r \to \infty$. We show that in both cases the optima $N^*$ and $m^*$ are both pushed to $\infty$, at rates which we derive. This is significant for $N^*$, since it shows that despite the log-signature being cursed by dimensionality, it is beneficial to store it at higher degrees for equations of higher norm and/or when requiring a lower tolerance. More specifically, in \autoref{thm:asympN*m*} we show that one can expect
\[
N^* \approx \sqrt{\frac{\log(e^rr/\eps)}{\log d}}, \qquad m^* \approx \bigg(\frac{e^r r^{N^*+1}}{\eps (N^*+1)!} \bigg)^{1/N^*}
\]
and also observe that the corresponding storage footprint grows more slowly than it would given the naive choices $N = 1$ and $m$ as large as needed. A more computational study of this optimisation problem corroborates the main results, additionally validating claims that are difficult to show analytically (e.g.\ that also $m = 1$ is asymptotically suboptimal).

In \autoref{sec:pvar} we begin by deriving an error estimate for the degree-$N$ Euler scheme in which the error is quantified in terms of the $p$-variation of the rough path $\Sig_\p(X)$, see \autoref{thm:eulerp}. We insist on the form of this error being similar to that in the $1$-variation case, and in particular that it decay not only polynomially in $m$ but factorially in $N$, a special feature of linear CDEs (simpler proofs would exist without this requirement). We regard this result to be of independent interest, since it shows convergence of the Euler scheme not only as $m \to \infty$ but also as $N \to \infty$ with $m$ arbitrary, including $m = 1$, independently of the other parameters. Formulating the corresponding constrained optimisation problem in a convenient manner is more challenging in the $p$-variation case, because the parameter $p$ can be chosen freely in $[1,N+1)$ and the formulation is in terms of the \emph{$p$-variation curve} $p \mapsto \|X\|_{\var{p}}$ of $X$, an infinite-dimensional quantity. We instead advocate for choosing the optimal $N$ and $m$ by solving batches of RDEs and measuring errors empirically. We put this plan into action by considering a class of smooth paths that approximate pure degree-$\nu \in \mathbb N$ rough paths, which underscore the importance of being able to store the path with $N > 1$. The important takeaway message of this section is that rough path theory is of value even when applied to smooth paths (see also \cite{BFPP22, maud2}): in the problem considered here, this is due to the fact that it unlocks the possibility of estimating errors according to norms that are more favourable.

Finally, in \autoref{sec:random} we set up the analogous problem for paths sampled from a stochastic process, with the error measured in $L^2$. Using martingale arguments and properties of the signature of Brownian motion with drift, in \autoref{thm:L2error} we derive an $L^2$-Euler error estimate for SDEs which has similar features to those in previous sections. The corresponding constrained optimisation problem could be studied and is qualitatively very similar to that in the bounded variation case; we instead opt for a practical approach here too, which we demonstrate with an empirical study of the case of multidimensional fractional Brownian motion with Hurst parameter $H > \frac 14$. Our results once again show that it is often beneficial to pick $N$ significantly higher than $1$ or even than the minimum $\lfloor H^{-1} \rfloor$ at which the rough path is defined, including when $H > \frac 12$.

While motivated by practical questions (e.g.\ see \autoref{fig:ukdale-cycle-patterns}), we believe our results reveal something quite deep about the nature of approximate sequential data: in our framework, the intrinsic roughness and dimensionality of a path is only defined w.r.t.\ a reference choice of $\eps$ and $r$. We hope this perspective can be developed further in the stochastic case: while by Chow's theorem \cite[\S 7.4]{FV10} all truncated Lie elements can be realised as truncated log-signatures of paths, this leaves out the important fact that, when $X$ is drawn from a distribution on path-space, the coordinates of its log-signature (e.g.\ in a Hall basis) may have a lot of redundancy. We believe it would be interesting to investigate optimal encodings of such data subject to similar $(\varepsilon, r)$-constraints, using ideas from information theory, and to strengthen the link with Kolmogorov complexity.

\begin{figure}[h!]
	\centering
	\begin{minipage}[c]{0.48\textwidth}
		\centering
		\includegraphics[width=\linewidth]{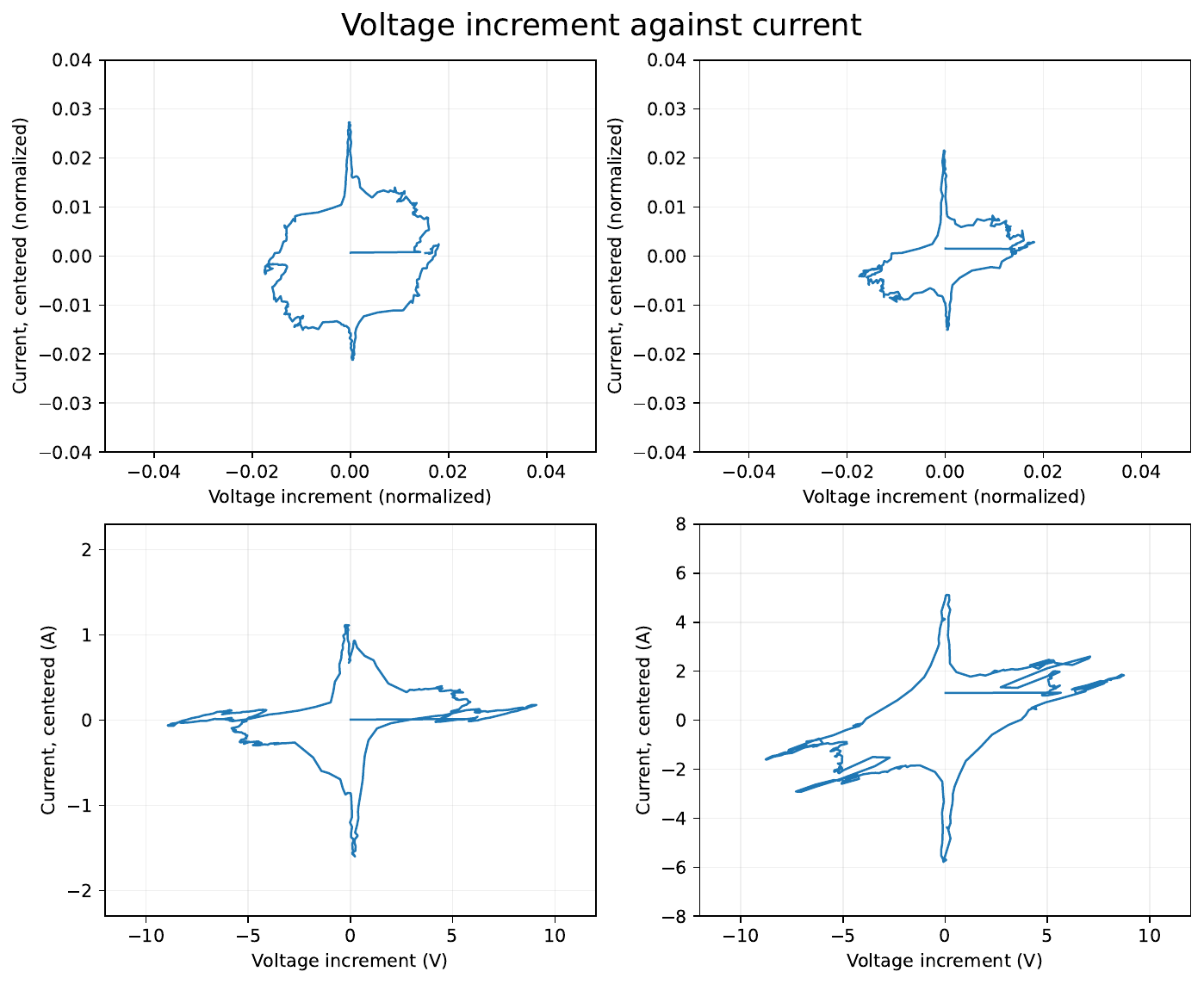}
	\end{minipage}\hfill
	\begin{minipage}[c]{0.48\textwidth}
			\caption{Four one-cycle voltage-increment/current patterns from the high-frequency UK-DALE domestic electricity dataset \cite{kelly2015ukdale}; a similar dataset has previously been analysed using signatures \cite{moore2022path}. The cycles were selected from three two-channel 16~kHz voltage/current FLAC recordings from houses~1, 2, and~5, totalling 489~MiB (more than 0.5~GB). The plots expose geometric structure in the aggregate electrical state, related to the type of home and combination of appliances in use. For datasets like these, truncated logsignatures over appropriately sized intervals have the potential benefit of providing much more compact and informative summaries of the path geometry compared to long raw time series. }
		\label{fig:ukdale-cycle-patterns}
	\end{minipage}
\end{figure}

\paragraph{Acknowledgements.}\ \\[-2ex]

We would like to thank Jason Rader for fruitful discussions.

During the first phase of this project, EF was employed at the University of Oxford and supported through the EPSRC programme grant [EP/S026347/1]. EF's research is currently supported through the ERC Grant SQGT [101116964]. TL was supported in part by UK Research and Innovation (UKRI) through the Engineering and Physical Sciences Research Council (EPSRC) via Programme Grants [Grant No. UKRI1010: High order mathematical and computational infrastructure for streamed data that enhance contemporary generative and large language models], [Grant No. EP/S026347/1: Unparameterised multi-model data, high order signatures and the mathematics of data science], and the UKRI AI for Science award [Grant No. UKRI2385: Creating Foundational Benchmarks for AI in Physical and Biological Complexity]. He was also supported by The Alan Turing Institute under the Defence and Security Programme (funded by the UK Government) and through the provision of research facilities; by the UK Government; and through CIMDA@Oxford, part of the AIR@InnoHK initiative funded by the Innovation and Technology Commission, HKSAR Government.

For the purpose of open access, the authors have applied a Creative Commons Attribution (CC BY) licence to any author accepted manuscript version arising from this manuscript.

\section{Paths of bounded variation}\label{sec:1var}

We refer to \cite{FV10} for the following background notions. Let $X \in C^\var{1}([0,T],\mathbb R^d)$. We denote the (truncated) \emph{signature} of $X$ by
\begin{equation}\label{eq:sig}
	\begin{split}
		\Sig^{n}(X)_{s,t} &\coloneqq \int_{s < u_1 < \ldots < u_n < t} \d X_{u_1} \otimes \cdots \otimes \d X_{u_n} \quad\in (\mathbb R^d)^{\otimes n} , \\
		\qquad \Sig_{N}(X)_{s,t} &\coloneqq \sum_{k = 0}^n \Sig^{k}(X)_{s,t} \quad\in T_N(\mathbb R^d) \coloneqq \bigoplus_{k = 0}^N (\mathbb R^d)^{\otimes k}
	\end{split}
\end{equation}
We will write
\begin{equation}
	\pi^n \colon T(\bbR^d) \twoheadrightarrow (\bbR^d)^{\otimes n},\qquad \pi_N \colon T(\bbR^d) \twoheadrightarrow T_N(\bbR^d).
\end{equation}
for the canonical projections (where $T(\bbR^d)$ is the untruncated tensor algebra), and recall that
\begin{equation}\label{eq:dimT}
	\dim(T_N(V)) = \frac{d^{N+1} - 1}{d-1} .
\end{equation}
The algebraic (shuffle) relations in $\Sig_N(X)$ can be eliminated by expressing it as exponential of a Lie series, the (truncated) \emph{log-signature}
\begin{align*}
	\pi_N \log \Sig(X) \in \mathfrak L_N(\bbR^d), \qquad \log(x) = \sum_{n \geq 1} \frac{(-1)^{n-1}}{n} (x - 1)^{\otimes n}
\end{align*}
valued in the free Lie algebra over $d$ generators truncated at degree $N$, $\mathfrak L_N(\bbR^d)$; the signature can be recovered by exponentiating the log-signature (noncommutatively, in the tensor algebra). This motivates our interest in
\begin{equation}
\Lambda_d (N) \coloneqq \dim \mathfrak L_N (\bbR^d),
\end{equation}
which we regard as the intrinsic dimension of the degree-$N$ data of $X$ (modulo tree-like equivalence).

\begin{remark}
	Here and below it is only interesting to take $d \geq 2$, since otherwise there are no non-trivial Lie brackets and $\mathfrak L (\bbR^d) = \bbR^d$. The convergence of Euler schemes presented below does not assume it, but time-homogeneous controlled differential equations are only an interesting (i.e.\ characteristic) class of functions on paths if the path is genuinely multidimensional. This is not limiting, since it is always possible to augment a path with the time coordinate, $t \mapsto (t, X_t)$. We therefore make the standing assumption that $d \geq 2$ (except for in \autoref{sec:random} in which time is automatically prepended).
\end{remark}

Let $A \in \mathcal L(\mathbb R^d, \mathcal L(\mathbb R^e, \mathbb R^e))$, with $d \geq 2$ and $e \geq 1$ integers. We consider the linear controlled differential equation (CDE)
\begin{equation}\label{eq:linear}
\d Y_t^j = A^j_{i\gamma} Y_t^i \d X^\gamma_t, \quad Y_0 = y_0 .
\end{equation}
in which we have used the Einstein summation convention. We view this as a function on paths, i.e.\ write
\begin{equation}
\Phi^A_{s,t} \colon C^\var{1}([s,t],\mathbb R^d) \to \mathcal L(\mathbb R^e, \mathbb R^e) , \quad X \mapsto (Y_s \mapsto Y_t) .
\end{equation}
which we will often abbreviate $\Phi_{s,t}$ when there is no ambiguity as to the choice of $X, A$. It satisfies the semigroup property
\[
\Phi_{s,t} = \Phi_{u,t}   \Phi_{s,u},\qquad s < u < t.
\]
and is given explicitly by the series \cite[Theorem 4.5]{LCL}
\begin{equation}\label{eq:AyS}
\Phi^A_{s,t}(X)y = \sum_{n = 0}^\infty A^{\otimes n} y \Sig^n(X)_{s,t} ,
\end{equation}
where we denote
\[
A^{\otimes n} = (A^i_{j_1\gamma_1} A^{j_1}_{j_2\gamma_2} \cdots A^{j_{n-1}}_{j\gamma_n})^i_{j, \gamma_1\ldots \gamma_n} \quad \in \mathcal L((\bbR^d)^{\otimes n}, \mathcal L(\mathbb R^e, \bbR^e)) .
\]
Define, the one-step degree-$N$ \emph{Euler scheme} and its iteration over a partition $\pi = \{0 = t_0 <\ldots < t_m = T\}$
\begin{equation}
\mathcal E^A_{N;s,t}(X)y \coloneqq \sum_{n = 0}^N A^{\otimes n} y \Sig^n(X)_{s,t}, \qquad \mathcal E^A_N(X)_{\pi} \coloneqq \mathcal E^A_{N;t_{m-1},t_m}(X) \cdots \mathcal E^A_{N;t_0,t_1}(X)
\end{equation}
Notice that both $\Phi_{s,t}$ and $\cE_{N;\pi}$ are linear in the initial condition $y$. Given a partition $\pi$ as above, we will often apply the Euler scheme to the restricted partition $\pi[t_0,t_k] \coloneqq \{t_0,\ldots, t_k\}$ and we will use similar abbreviations as for $\Phi$.

Throughout this paper we will use the Hilbert-Schmidt inner product and norm on $(\mathbb R^d)^{\otimes n}$, given by
\[
\langle x_1 \otimes \cdots \otimes x_n , y_1 \otimes \cdots \otimes y_n \rangle = \langle x_1, y_1 \rangle \cdots \langle x_n, y_n \rangle
\]
on elementary tensors, and extended linearly. By Cauchy-Schwarz it follows that, for $S \in (\bbR^d)^{\otimes n}$
\begin{equation}\label{eq:AySleq}
|A^{\otimes n}yS| \leq |A|^n |y| |S|,
\end{equation}
where the norm on $A$ is also Euclidean, i.e.\ Frobenius. It would be possible and even more natural to use other combinations of norms on $A$ and $S$, e.g.\ injective or projective (see \cite[Appendix 5.6]{LQ02}), but these have the potential drawback of being harder to evaluate practically.

In this paper, we will be concerned with simplifications and variations of following optimisation problem, which seeks to minimise the storage taken up by the data needed to run the Euler scheme $\mathcal E_{N, \pi}$ with $\pi$ containing $m$ intervals, subject to $N$ and $\pi$ being chosen so that such approximation lies within $\varepsilon$ of the solution for all differential equations of norm less than $r$: 
\begin{equation}\label{eq:problemBasic}
\begin{split}
\text{minimise}\quad & m\Lambda_d(N) \quad \text{over } N, m \in \mathbb N^+ \\
\text{subject to}\quad &|\Phi_{0,T}^A(X) - \mathcal E_N^A(X)_\pi | \leq \varepsilon \quad \text{for all } A, X \text{ with } |A| \|X\|_{\var{1}} \leq r.
\end{split}
\end{equation}
Here and below the norm $|\Phi - \mathcal E_{N,\pi}|$ should be understood as taking a supremum over all $y_0$ on the unit ball. It is necessary to constrain $|A|$ because the behaviour of any numerical scheme can generally expected to be arbitrarily poor as $|A| \to \infty$. The variables over which we are optimising are $N, m$, while the fixed parameters are $d \in \mathbb N_{\geq 2}$, and $\varepsilon, r > 0$. While the optimisation variables $N, m$, will only depend on $d, \eps, r$, the choice of the partition $\pi$ is allowed to depend on $X$.

The problem \eqref{eq:problemBasic} is not analytically tractable. Rather, we will substitute the asymptotics for $\Lambda_d(N)$ as $N \to \infty$ and a bound in terms of $N, m, r$ and $\| X \|_{\var{1}}$ for the error in the constraint. We begin by deriving the former (see \autoref{fig:dims}).

\begin{theorem}\label{thm:dims}
	For $d \geq 2$ and $N \geq 1$
	\[
	\Lambda_d (N) \sim \frac{d^{N+1}}{(d-1)N} \quad \text{as} \quad N \to \infty .
	\]
\end{theorem}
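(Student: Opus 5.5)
The plan is to start from Witt's necklace formula for the graded dimensions of the free Lie algebra. The degree-$n$ component of $\mathfrak L(\bbR^d)$ has dimension
\[
\ell_d(n) = \frac1n \sum_{k \mid n} \mu(k)\, d^{n/k},
\]
where $\mu$ is the Möbius function. Hence $\Lambda_d(N) = \sum_{n=1}^N \ell_d(n)$. This formula is classical, for instance via the PBW theorem and comparison of Hilbert series of $T(\bbR^d)$ and $U(\mathfrak L)$, and I will cite it rather than reprove it.

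The first step is to isolate the main term. I write $\ell_d(n) = d^n/n + R_n$, where $R_n$ collects the divisors $k \geq 2$. Each such term is at most $d^{n/2}$ in absolute value, and there are at most $n$ divisors. This gives
\[
|R_n| \leq \frac{1}{n}\cdot n\, d^{n/2} = d^{n/2},
\]
and therefore
\[
\sum_{n \le N} |R_n| \leq C_d\, d^{N/2}.
\]
This is $o(d^{N}/N)$, because $N d^{-N/2} \to 0$ when $d \geq 2$. So it suffices to show
\[
S_N := \sum_{n=1}^N \frac{d^n}{n} \sim \frac{d^{N+1}}{(d-1)N}.
\]

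The second step handles $S_N$. I factor out the top term:
\[
S_N = \frac{d^N}{N} \sum_{j=0}^{N-1} d^{-j}\, \frac{N}{N-j}.
\]
For each fixed $j$ the summand tends to $d^{-j}$ as $N \to \infty$. To pass the limit through the sum I use dominated convergence with a split. On $j \le N/2$ the factor $N/(N-j)$ is at most $2$, so $2d^{-j}$ is a summable dominator. On $j > N/2$ the factor is at most $N$, so that tail contributes at most $N \sum_{j > N/2} d^{-j} \lesssim N d^{-N/2} \to 0$. Hence
\[
\sum_{j=0}^{N-1} d^{-j}\,\frac{N}{N-j} \to \sum_{j \ge 0} d^{-j} = \frac{d}{d-1},
\]
and this gives the claim.

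The main obstacle is only to justify the necklace formula and to keep the error term uniformly controlled, which the crude bound $d^{n/2}$ handles. Alternatively, I could avoid Witt's formula by using the two-sided bounds
\[
\frac{d^n - n\,d^{n/2}}{n} \leq \ell_d(n) \leq \frac{d^n}{n},
\]
which follow from counting Lyndon words of length $n$ (these form a basis of the degree-$n$ component). After that, the same summation argument applies.
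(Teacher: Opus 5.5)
Your proof is correct. The first half matches the paper's. Both start from Witt's formula and reduce to $\Sigma_d(N)=\sum_{n=1}^N d^n/n$. Your per-degree error bound $d^{n/2}$ is cruder than the paper's $\mathrm{O}(d^{n/2}/n)$, but this does not matter: after summing, only $o(d^N/N)$ is needed.

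The second half takes a different route. The paper sandwiches $\Sigma_d(N)$:
\begin{itemize}
\item the lower bound is $\frac1N\sum_{n\le N}d^n$;
\item the upper bound comes from splitting at $n=uN$, bounding $1/n$ by $1$ below $uN$ and by $1/(uN)$ above;
\item it then takes $\liminf$ and $\limsup$ and lets $u\nearrow 1$.
\end{itemize}
You instead reindex from the top, $n=N-j$, and pass to the limit inside the sum by dominated convergence (Tannery's theorem).

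Each approach has its advantage. Yours identifies the constant $d/(d-1)$ directly as the geometric series $\sum_{j\ge 0}d^{-j}$ contributed by the top few terms, and it avoids the auxiliary parameter $u$ and the second limit. The paper's gives explicit non-asymptotic two-sided inequalities for each fixed $u$. Your representation recovers the paper's lower bound immediately, since $N/(N-j)\ge 1$.

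Your Lyndon-word alternative is also sound. It amounts to the identity $d^n=\sum_{k\mid n}k\,\ell_d(k)$, which is equivalent to Witt's formula.
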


\begin{proof}
Recall Witt's formula \cite{Witt1937} for the dimension of the homogeneous components of the free Lie algebra over $d \geq 2$ generators:
\begin{equation}\label{eq:witt}
	\ell_d(n) \coloneqq \dim \pi_n \mathfrak L(\bbR^d) = \frac 1n \sum_{k \mid n} \mu(k) d^{n/k} = \frac{d^n}{n} + \mathrm O \bigg( \frac{d^{n/2}}{n} \bigg).
\end{equation}
where the sum is over positive divisors of $n$ and $\mu$ is the M\"obius function. Therefore 
\[
\Lambda_d(N) = \sum_{n = 1}^N \ell_d(n) = \sum_{n = 1}^N \frac{d^n}{n} + \mathrm{O}\bigg( \sum_{n = 1}^N \frac{d^{n/2}}{n}  \bigg) = \sum_{n = 1}^N \frac{d^n}{n} + \mathrm{O}(d^{n/2}) \sim \sum_{n = 1}^N \frac{d^n}{n} \eqqcolon \Sigma_d(N).
\]
For this we reproduce the argument of \cite{888521}, slightly generalised. First,
\[
\Sigma_d(N) \geq \frac 1N \sum_{n = 1}^N d^n = \frac 1N \frac{d^{N+1} - d}{d-1} .
\]
Next, observe that for any $u \in (0,1)$,
\begin{align*}
	\Sigma_d(N) &= \sum_{1 \leq n < uN} \frac {d^n}{n} + \sum_{uN \leq n \leq N} \frac {d^n}{n} \\
	&\leq \sum_{1 \leq n < uN} d^n + \frac{1}{uN} \sum_{uN \leq n \leq N} d^n \\
	&\leq \frac{d^{uN} - d}{d - 1} + \frac{1}{uN} \frac{d^{N+1} - d}{d - 1} .
\end{align*}
We have shown
\[
1 - d^{-N} \leq \frac{(d - 1)N}{d^{N+1}} \Sigma_d(N) \leq N (d^{uN - (N+1)} - d^{-N}) + \frac{1 - d^{-N}}{u} ,
\]
from which it follows that 
\[
1 \leq \liminf_{N \to \infty} \frac{(d-1)N}{d^{N+1}} \Sigma_d(N) \leq \limsup_{N \to \infty} \frac{(d-1)N}{d^{N+1}}\Sigma_d(N) \leq \frac 1u
\]
and the assertion follows by taking $u \nearrow 1$.
\end{proof}

\begin{figure}[htbp]
	\centering
	\includegraphics[width=0.6\textwidth]{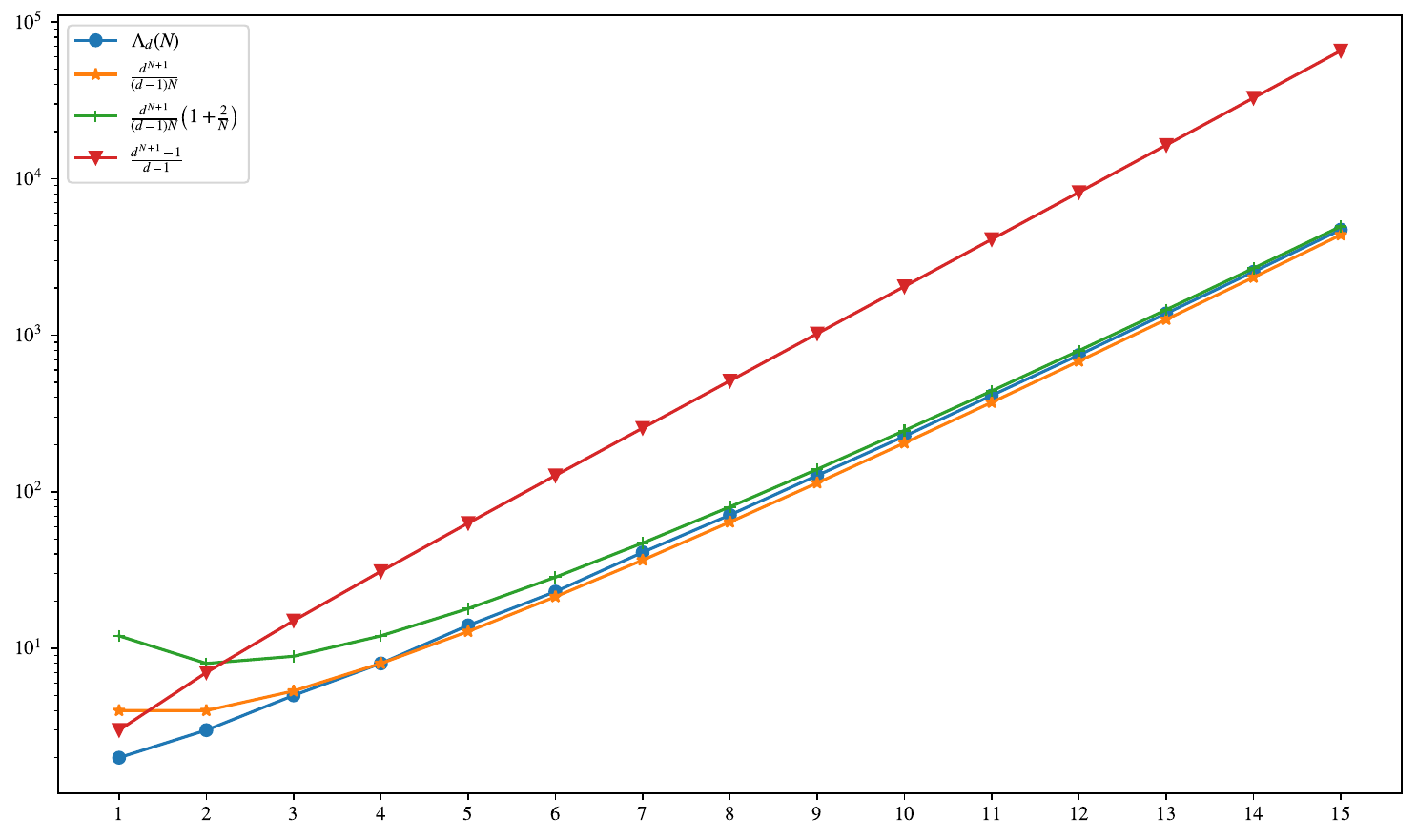}
	\caption{Growth of the dimension of the truncated free Lie algebra, dimension of the tensor algebra, asymptotics predicted by \autoref{thm:dims} for $d = 2$ and $1 \leq N \leq 15$. We have additionally plotted a function which is easy to show is a strict upper bound to $\Lambda_d(N)$ with the same asymptotics.}
	\label{fig:dims}
\end{figure}

We proceed to derive this bound, which is similar to many ones in the literature (e.g.\ it has the expected decay in $m$), but has the feature, specific to linear CDEs, of also decaying factorially in $N$. We refer to the \emph{equal length split} partition $\pi$ to be the (unique, as long as $t \mapsto \| X \|_{\var{1}, [0,t]}$ is strictly increasing) partition of $[0,T]$ with a given number of intervals ($m$) such that $\| X \|_{\var{1}, [t_k,t_{k-1}]} \equiv \| X \|_{\var{1}}/m$.

\begin{theorem}[Euler error for linear CDEs, bounded variation case]\label{thm:euler1}
For any $X \in C^{\var{1}}([0,T], \mathbb R^d)$, $A \in \mathcal L(\mathbb R^d, \mathcal L(\mathbb R^e, \mathbb R^e))$ with $|A| \leq r$, $N, m \in \mathbb N^+$ and $\pi$ a partition of $[0,T]$ with $m$ intervals, we have
\begin{align*}
|\Phi_{0,T}^A(X)y_0 - \mathcal E_{\pi[t_0,t_m]} y_0| &\leq \frac{r^{N+1}}{(N+1)!} \exp(r \| X \|_{\var{1}}) y_0 \sum_{k = 1}^m \| X \|_{\var{1},[t_{k - 1}, t_k]}^{N+1} \\
&= \frac{(r\| X \|_{\var{1}})^{N+1}}{(N+1)! m^N} \exp(r \| X \|_{\var{1}}) |y_0|
\end{align*}
where the equality holds if $\pi$ is the equal length split $\| X \|_{\var{1}, [t_{k-1}, t_k]} \equiv \| X \|_{\var{1}}/m$, the optimal choice among all partitions with $m$ intervals. Finally, this bound is asymptotically sharp as either $m \to \infty$ with $N$ fixed, and as $N \to \infty$ with $m$ fixed, in the latter case up to a multiplicative $m$-dependent constant converging to $1$ as $m \to \infty$.
\end{theorem}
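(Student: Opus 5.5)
The plan is a telescoping argument that combines a one-step local error bound with stability bounds for both the exact flow and the Euler scheme. Write $\ell_k \coloneqq \| X \|_{\var{1},[t_{k-1},t_k]}$ and $L \coloneqq \|X\|_{\var{1}}$.

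\emph{Local error.} From the series \eqref{eq:AyS} and the bound \eqref{eq:AySleq}, the one-step error is the tail of the series:
\[
|(\Phi_{t_{k-1},t_k} - \cE_{N;t_{k-1},t_k})y| \leq |y|\sum_{n > N} r^n |\Sig^n(X)_{t_{k-1},t_k}|.
\]
We then use the standard estimate $|\Sig^n(X)_{s,t}| \leq \|X\|_{\var{1},[s,t]}^n/n!$, which follows from the simplex volume in the arclength parametrisation. This gives the tail bound
\[
|y|\sum_{n>N} \frac{(r\ell_k)^n}{n!} \leq |y|\,\frac{(r\ell_k)^{N+1}}{(N+1)!}\,e^{r\ell_k},
\]
using $n! \geq (N+1)!\,(n-N-1)!$.

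\emph{Stability.} By the same estimate, $|\Phi_{s,t}| \leq e^{r\|X\|_{\var{1},[s,t]}}$ and $|\cE_{N;s,t}| \leq e^{r\|X\|_{\var{1},[s,t]}}$. Both bounds multiply over adjacent intervals, because $1$-variation is additive.

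\emph{Telescoping.} Write
\[
\Phi_{0,T} - \cE_{\pi} = \sum_{k=1}^m \Phi_{t_k,T}\,(\Phi_{t_{k-1},t_k}-\cE_{N;t_{k-1},t_k})\,\cE_{\pi[t_0,t_{k-1}]}.
\]
Bounding the $k$-th term by $e^{r\|X\|_{[t_k,T]}}\cdot\frac{(r\ell_k)^{N+1}}{(N+1)!}e^{r\ell_k}\cdot e^{r\|X\|_{[0,t_{k-1}]}}$, the exponentials combine to exactly $e^{rL}$. Summing over $k$ gives the first inequality.

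\emph{Optimal partition.} Since $x\mapsto x^{N+1}$ is convex, Jensen's inequality shows that $\sum_k \ell_k^{N+1}$ subject to $\sum_k \ell_k = L$ is minimised at $\ell_k \equiv L/m$. The minimum value is $L^{N+1}/m^N$, which gives the equality and the optimality claim. For the existence of such a partition one uses continuity of $t\mapsto\|X\|_{\var{1},[0,t]}$.

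\emph{Sharpness.} This is the step I expect to be the main obstacle. One needs an extremal example where every inequality is nearly tight. I would take $d$ arbitrary and $e=1$, with $X$ a straight line in a fixed direction $v$ and $A$ acting as multiplication by $r$ in direction $v$. Then $\Phi_{0,T}=e^{rL}$ exactly, $\Sig^n = (Lv)^{\otimes n}/n!$ saturates the factorial bound, and $\cE_\pi = \big(\sum_{n\leq N}(rL/m)^n/n!\big)^m$. The error is then the explicit scalar $e^{rL} - T_N(rL/m)^m$, where $T_N$ denotes the degree-$N$ Taylor polynomial of $\exp$.

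For the limit $m\to\infty$ with $N$ fixed, write $T_N(x) = e^x(1 - R(x))$ with $R(x) \sim e^{-x}x^{N+1}/(N+1)!$. Then
\[
e^{rL}\big(1-(1-R)^m\big) \sim e^{rL}\,mR \sim e^{rL}\,\frac{(rL)^{N+1}}{(N+1)!\,m^N},
\]
which matches the bound. For the limit $N\to\infty$ with $m$ fixed, the same expansion gives $e^{rL}mR(rL/m)$ with $R(x) \sim x^{N+1}e^{-x}/(N+1)!$. The ratio to the bound is therefore $e^{-rL/m}$, an $m$-dependent constant that tends to $1$ as $m\to\infty$, as claimed. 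The delicate part is checking that the first-order expansion of $1-(1-R)^m$ is justified uniformly in each regime; this holds because $R\to0$ in both regimes.
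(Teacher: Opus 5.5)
Your proposal is correct and follows the paper's proof. You use the same factorial-decay local error and stability bounds, the same Jensen argument, and the same $e^{rt}$ sharpness example. Your telescoping sum $\sum_k \Phi_{t_k,T}(\Phi_{t_{k-1},t_k}-\cE_{N;t_{k-1},t_k})\cE_{\pi[t_0,t_{k-1}]}$ is the mirror image of what the paper's one-step recursion plus discrete Gr\"onwall unrolls to (there $\cE$ sits on the left and $\Phi$ on the right), and it gives the identical bound.

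One small correction concerns the regime $m\to\infty$. There the expansion $1-(1-R)^m\sim mR$ needs $mR\to 0$, not merely $R\to 0$. This does hold, since $mR=\mathrm{O}(m^{-N})$, as your own preceding display shows.
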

\begin{proof}
Since all quantities are linear in $y_0$ we can eliminate it from the notation. From factorial decay of the signature in the bounded variation case \cite[Proposition 2.2]{LCL}
\[
|\Sig^n(X)_{s,t}| \leq \frac{\|X\|_{\var{1}}^n}{n!},
\]
\eqref{eq:AyS}, and \eqref{eq:AySleq} we have the bounds on the solution and one-step error
\begin{align*}
|\Phi_{s,t}|, |\cE_{N;s,t}| &\leq \sum_{n = 0}^\infty r^n \frac{\|X\|_{\var{1,[s,t]}}^n}{n!} = \exp(r \|X\|_{\var{1},[s,t]} ),\\
|\Phi_{s,t} - \cE_{N;s,t}| &\leq \sum_{n = N+1}^\infty r^n \frac{\|X\|_{\var{1},[s,t]}^n}{n!} \leq \frac{(r \|X\|_{\var{1},[s,t]})^{N+1}}{(N+1)!} \exp(r \|X\|_{\var{1},[s,t]})
\end{align*}
Therefore, using the following simple form of the discrete Grönwall inequality
\begin{equation}\label{eq:discreteGronwall1}
\begin{split}
&\varepsilon_l \leq b_l + e^{a_l} \varepsilon_{l-1}, \quad l \geq 1, \quad \varepsilon_0 = 0 \\
\implies \ &\varepsilon_l \leq \sum_{k = 1}^l b_k \exp \Big( \sum_{j = k + 1}^l a_j \Big)
\end{split}
\end{equation}
for $\eps_l,a_l,b_l$ all positive, and the additivity of $1$-variation over consecutive intervals, we obtain
\begin{align*}
&|\Phi_{t_0,t_l} - \mathcal E_{\pi[t_0,t_l]}| \\
={}& |\Phi_{t_{l-1}, t_l} \Phi_{t_0,t_{l-1}} - \mathcal E_{t_{l-1},t_l} \mathcal E_{\pi[t_0,t_{l-1}]}|\\
\leq{}& |\Phi_{t_{l-1}, t_l} - \cE_{t_{l-1}, t_l}||\Phi_{t_0,t_{l-1}}| + |\mathcal E_{t_{l-1},t_l}| |\Phi_{t_0,t_{l-1}} - \mathcal E_{\pi[t_0,t_{l-1}]}| \\
\leq{}& \frac{(r \|X\|_{\var{1},[t_{l-1},t_l]})^{N+1}}{(N+1)!}\exp(r \|X\|_{\var{1},[t_0,t_l]}) + \exp(r \|X\|_{\var{1},[t_{l-1},t_l]})|\Phi_{t_0,t_{l-1}} - \mathcal E_{\pi[t_0,t_{l-1}]}| \\
\leq{} &\sum_{k = 1}^l \frac{(r \| X\|_{\var{1},[t_{k-1},t_k]})^{N+1}}{(N+1)!}\exp(r \| X\|_{\var{1},[t_0,t_k]}) \exp\Big(r \sum_{j = k+1}^l \| X\|_{\var{1},[t_{j-1},t_j]}\Big) \\
={}& \frac{r^{N+1}}{(N+1)!} \exp(r \| X \|_{\var{1},[t_0,t_l]}) \sum_{k = 1}^l \| X \|_{\var{1},[t_{k - 1}, t_k]}^{N+1}
\end{align*}

The claim about optimality of the bound for the equal-length split follows from the fact that for any $x > 0$ the function
\[
\Big\{(x_1,\ldots, x_m) \in [0,+\infty)^m : \sum_{k = 1}^m x_k = x \Big\} \to [0,+\infty), \qquad (x_1,\ldots,x_m) \mapsto \frac 1m \sum_{k = 1}^m x^{N+1}
\]
is minimised for $x_1 = \ldots = x_m = x/m$, since $x \mapsto x^{N+1}$ is strictly convex, by the condition for equality in Jensen's inequality for convex combinations.

Regarding the sharpness claim, we consider the one-dimensional ODE for $e^{rt}$, $\d Y = rY\d t$. Since $X_t = t$, $\| X \|_{\var{1}} = T$ and we compute directly
\begin{equation}\label{eq:sharp}
\begin{split}
|\Phi_{0,T}^A(X)y_0 - \mathcal E_{\pi[t_0,t_m]} y_0| &= \bigg| e^{rT} - \Big( \sum_{n = 0}^N \frac{(rT/m)^n}{n!} \Big)^m \bigg||y_0| \\
&= e^{rT}\bigg| 1 - \Big( 1 - e^{-rT/m}\sum_{n = N+1}^\infty \frac{(rT/m)^n}{n!} \Big)^m \bigg||y_0|.
\end{split}
\end{equation}
With $N$ fixed and as $m \to \infty$, using that for $a_m \geq 0$ with $ma_m \to 0$, $(1-a_m)^m = 1 - ma_m + \mathrm{O}((ma_m)^2)$,
\begin{align*}
|\Phi_{0,T}^A(X)y_0 - \mathcal E_{\pi[t_0,t_m]} y_0| &= e^{rT}\bigg| 1 - \Big( 1 - (1+\mathrm{O}(m^{-1})) \frac{(rT/m)^{N+1}}{(N+1)!} + \mathrm{O}(m^{-(N+2)}) \Big)^m \bigg||y_0| \\
&\sim e^{rT} \bigg| 1 - \bigg( 1 - \frac{(rT)^{N+1}}{(N+1)!m^N} \bigg) \bigg||y_0| \\
&\sim \frac{e^{rT} (rT)^{N+1}}{(N+1)!m^N}|y_0|.
\end{align*}
With $m$ fixed and $N \to \infty$, starting again from \eqref{eq:sharp}, since for $0 \leq b_N \to 0$, $(1-b_N)^m = 1 - mb_N + \mathrm{O}(b_N^2)$,
\begin{align*}
|\Phi_{0,T}^A(X)y_0 - \mathcal E_{\pi[t_0,t_m]} y_0| &= e^{rT}\bigg| 1 - \Big( 1 - e^{-rT/m} \Big( \frac{(rT/m)^{N+1}}{(N+1)!} + \mathrm{O}\Big(\frac{(rT/m)^{N+2}}{(N+2)!}\Big) \Big) \Big)^m \bigg||y_0| \\
&\sim e^{rT(1- 1/m)} \frac{(rT)^{N+1}}{(N+1)!m^N} |y_0|,
\end{align*}
concluding the proof since
\[
\frac{e^{rT(1- 1/m)}/m^N}{e^{rT}/m^N} = e^{-rT/m} \to 1.\qedhere
\]
\end{proof}

As anticipated, we now rephrase the constrained optimisation problem \eqref{eq:problemBasic} in terms of the storage asymptotics and Euler error bound (with equal length split partition). The form of the bound justifies a posteriori $|A|$ and $\|X\|_{\var{1}}$ being multiplied in \eqref{eq:problemBasic}; without loss of generality we may normalise $X$ to have unit length, reobtaining the formulation of \autoref{prob}. Moreover, we consider the problem in its continuously relaxed form, i.e.\ optimising over $N, m \in (0,\infty)$, with the factorial extended with the Gamma function $(N+1)! = \Gamma(N+2)$ as usual. The problem thus becomes
\begin{equation}\label{eq:relaxed}
\begin{split}
	\text{minimise}\quad &f_d(N,m) \coloneqq m\frac{d^{N+1}}{(d-1)N} \quad \text{over } N, m \in (0,\infty) \\
	\text{subject to}\quad &g_r(N, m) \coloneqq \frac{r^{N+1}e^r}{(N+1)! m^N} = \eps.
\end{split}
\end{equation}
Here we have replaced the inequality $g_r \leq \eps$ with an equality: this is possible since $g_r$ is strictly decreasing in $m$ for every $N$, implying the constraint is active, i.e.\ the minimum will be attained on the boundary of the admissible region.

Let $m^*(N)$ be the value of $m$ which satisfies the constraint for a given $N$, namely
\begin{equation}
m^*(N) \coloneqq \bigg(\frac{e^r r^{N+1}}{\eps (N+1)!} \bigg)^{1/N}
\end{equation}
and consider the function
\begin{equation}\label{eq:phisub}
\phi(N) \coloneqq f_d(N, m^*(N)) = \frac{d^{N+1}}{(d-1)N} \bigg(\frac{e^r r^{N+1}}{\eps (N+1)!} \bigg)^{1/N}, \qquad N \in (0,+\infty).
\end{equation}

\begin{lemma}\label{eq:minphi}
For $\eps \leq e^r r$ the function $\phi$ diverges to $+\infty$ as $N \to 0, +\infty$ and is strictly convex on $(0,+\infty)$. Thus \eqref{eq:relaxed} admits a unique solution.
\end{lemma}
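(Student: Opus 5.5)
The plan is to work with $\psi \coloneqq \log \phi$ rather than with $\phi$ itself. If $\psi$ is strictly convex, then so is $\phi = e^{\psi}$, because $\exp$ is strictly increasing and convex. Similarly, $\psi \to +\infty$ at an endpoint is equivalent to $\phi \to +\infty$ there. Writing $c \coloneqq r + \log r - \log\eps = \log(e^r r/\eps)$ and $h(N) \coloneqq \log\Gamma(N+2)$, a direct expansion of \eqref{eq:phisub} gives
\[
\psi(N) = (N+1)\log d - \log(d-1) - \log N + \log r + \frac{c}{N} - \frac{h(N)}{N}.
\]
The hypothesis $\eps \leq e^r r$ is used exactly once: it gives $c \geq 0$.

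For strict convexity I will treat the terms of $\psi$ separately.
\begin{itemize}
\item The affine terms do not contribute.
\item The term $-\log N$ has second derivative $1/N^2 > 0$, and this supplies the strictness.
\item The term $c/N$ has second derivative $2c/N^3 \geq 0$, since $c \geq 0$.
\item The term $-h(N)/N$ is the main point: I need $N \mapsto h(N)/N$ to be concave.
\end{itemize}
For the last term, since $h(0) = \log\Gamma(2) = 0$, I will write
\[
\frac{h(N)}{N} = \frac1N\int_0^N h'(s)\,\d s = \int_0^1 h'(Nt)\,\d t .
\]
Here $h'(s) = \psi_0(s+2)$ is the digamma function, which is concave on $(0,\infty)$ because its second derivative, $\psi_2(s+2) = -\sum_{k \geq 0} 2/(s+2+k)^3$, is negative. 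For each fixed $t$, the map $N \mapsto h'(Nt)$ is therefore concave. Integrating over $t \in [0,1]$ preserves concavity, so $h(N)/N$ is concave and $-h(N)/N$ is convex. I expect this averaging representation to be the only non-routine step; differentiating $h(N)/N$ twice directly would be messier.

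For the limits, consider first $N \to 0^+$. Here $-\log N \to +\infty$ and $c/N \geq 0$, while $h(N)/N \to h'(0) = \psi_0(2)$ is finite. Hence $\psi \to +\infty$, including in the borderline case $c = 0$. Next consider $N \to +\infty$. By Stirling, $h(N)/N = \log N + \mathrm O(1)$, so $\psi(N) = N\log d - 2\log N + \mathrm O(1) \to +\infty$, using $d \geq 2$.

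A strictly convex continuous function on $(0,\infty)$ that diverges at both ends has a unique minimiser $N^*$. As argued before the lemma, the constraint in \eqref{eq:relaxed} is active, and for each $N$ it is satisfied by exactly one value $m = m^*(N)$. Hence $(N^*, m^*(N^*))$ is the unique solution of \eqref{eq:relaxed}.
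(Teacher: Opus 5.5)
Your proof is correct. Its key input is the same as the paper's, namely $\psi''<0$ for the digamma function $\psi$ (this is your $\psi_0$; your $\psi=\log\phi$ is the paper's $\theta$). What differs is the decomposition.

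The paper computes $\theta''$ in one go. It then implicitly uses $N^3\theta''(N) = \eta(N) + 2\log(e^r r/\eps)$ to reduce to the worst case $\eps = e^r r$, and proves $\eta>0$ from $\eta(0)=0$ and $\eta'(N) = 1 - N^2\psi''(N+2) > 0$. You instead split $\theta$ into separately convex terms, and get convexity of $-\log\Gamma(N+2)/N$ from the averaging identity $\log\Gamma(N+2)/N = \int_0^1\psi(Nt+2)\,\d t$.

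The two are the same computation in different clothing. Differentiating your identity twice gives $\frac{\d^2}{\d N^2}\frac{\log\Gamma(N+2)}{N} = N^{-3}\int_0^N s^2\psi''(s+2)\,\d s$, so your concavity claim is exactly $\eta(N) - N \geq 0$. The paper's $\eta>0$ simply absorbs the $1/N^2$ coming from $-\log N$, which you keep separate to get strictness.

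Your packaging has three small advantages:
\begin{itemize}
\item It shows transparently where strictness comes from and where the hypothesis $c\geq 0$ is used.
\item It avoids writing out the full second derivative.
\item It handles the borderline case $\eps = e^r r$ in the limit $N\to0^+$ correctly. The $1/N$ prefactor still forces divergence, since $\log\Gamma(N+2)/N\to\psi(2)$ stays finite. The paper's limit argument is phrased only for $\eps<e^r r$, where the argument of the logarithm tends to $e^r r/\eps>1$.
\end{itemize}
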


\begin{proof}
Rewriting, we see that for $N \to 0^+$
\[
\phi(N) =  \frac{d^{N+1}}{(d-1)N} \exp\bigg(\frac 1N \log \frac{e^r r^{N+1}}{\eps (N+1)!} \bigg) \xrightarrow{N \to 0^+} +\infty \qquad \text{for } \eps < e^r r
\]
since the argument of the logarithm tends to $e^r r/\eps > 1$. By Stirling's approximation, without any condition on the parameters we have for $N \to +\infty$
\[
\phi(N) \sim \frac{d^{N+1}}{(d-1)N} \bigg(\frac{e^r (er)^{N+1}}{\sqrt{2\pi}\eps (N+1)^{N+3/2}} \bigg)^{1/N} \sim \frac{er}{d-1} \frac{d^{N+1}}{N^2} \xrightarrow{N \to \infty} + \infty
\]
We now show strict convexity by showing strict log-convexity:
\[
\phi''(N) = \phi(N)(\theta''(N) + \theta'(N)^2) > 0 \ \Longleftarrow \ \theta''(N) > 0 \qquad \text{for } N \in (0,+\infty), \qquad \text{with } \theta \coloneqq \log \phi.
\]
Recalling the digamma and polygamma functions and the series representation fo the latter \cite[(6.4.10)]{handbook}
\begin{align*}
	\psi(z) &= \frac{\d}{\d z} \log \Gamma(z),\qquad \psi^{(m)}(z) = \frac{\d^m}{\d z^m} \psi(z) \\
	\psi^{(m)}(z) &= (-1)^{m+1}m! \sum_{k = 0}^\infty \frac{1}{(z+k)^{m+1}}, \qquad z \in \mathbb C \setminus \mathbb Z_{<0}, \ m \in \mathbb N^+.
\end{align*}
We have
\begin{equation}\label{eq:theta}
\begin{split}
	\theta(N) &= (N+1)\log d - \log(d-1) - \log N + \log r + \frac 1N \Big( \log \Big(\frac{e^r r}{\eps}\Big) - \log \Gamma(N+2) \Big) \\
	\theta'(N) &= \log d -\frac 1N -\frac{1}{N^2} \Big( \log \Big(\frac{e^r r}{\eps}\Big) - \log \Gamma(N+2) \Big) -\frac{\psi(N+2)}{N} \\
	\theta''(N) &= \frac{1}{N^2} +\frac{2}{N^3} \Big( \log \Big(\frac{e^r r}{\eps}\Big) - \log \Gamma(N+2) \Big) +\frac{2\psi(N+2)}{N^{2}} -\frac{\psi'(N+2)}{N}
\end{split}
\end{equation}
We have
\begin{align*}
	\eta(N) &\coloneqq N^3 \theta''(N)\big|_{\eps = e^r r} = N - 2\log \Gamma(N+2) + 2N\psi(N+2) - N^2\psi^{(1)}(N+2) \\
	\eta'(N) &= 1 - N^2\psi^{(2)}(N+2) = 1 + 2N^2 \sum_{k = 0}^\infty \frac{1}{(N+2+k)^3} > 0.
\end{align*}
Since $\eta(0) = 0$ we conclude $\eta > 0$ on $(0,+\infty)$ and the claim about $\phi$ follows, and since a convex function on $(0,+\infty)$ which diverges to $+\infty$ has a unique minimum we have that
\begin{equation}
	N^* \coloneqq \arg\min_{(0,\infty)} \phi, \qquad m^* \coloneqq m^*(N^*).
\end{equation}
is the unique solution to \eqref{eq:relaxed}.
\end{proof}
Plotting shows that the convexity claim fails (close to $0$) immediately once $\eps > e^r r$. Next, we view $N^*$ and $m^*$ as a functions of $r,\eps$ and show that both $N^*$ and $m^*$ diverge as $r \to +\infty$ or $\eps \to 0^+$. It will be convenient to introduce the parameter $\lambda$, which will occasionally be substituted for the function of $\eps, r$
\begin{equation}\label{eq:lambda}
\lambda(\eps, r) \coloneqq \log(r e^r/\eps).
\end{equation}
The following is the main result in this section, in which we obtain the asymptotics for $N^*$ and $m^*$ in terms of $r$ and $\eps$. The optimal truncation degree $N^*$ grows as the square root of $\log(1/\eps)$ and at the square root rate in $r$. The optimal number of intervals $m^*$ grows subpolynomially in $1/\eps$, but stretched-exponentially in $r$ (faster than every power of $r$ but slower than $e^{cr}$ for every $c>0$).
\begin{theorem}\label{thm:asympN*m*}
$N^*$ is strictly increasing as $r$ increases and as $\eps$ decreases, and
\[
N^* \sim \sqrt{\frac{\lambda(\eps, r)}{\log d}},\qquad m^* \sim r \sqrt{\frac{\log d}{\lambda(\eps, r)}} \exp \big(\sqrt{\lambda(\eps, r) \log d}\big)
\]
as $r \to +\infty$ or $\eps \to 0^+$, in each case with the other fixed.
\end{theorem}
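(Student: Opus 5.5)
The plan is to work entirely through the first-order condition $\theta'(N^*)=0$, with $\theta=\log\phi$ as in \eqref{eq:theta}. The key observation is that $\eps$ and $r$ enter $\theta'$ only through $\lambda=\log(e^r r/\eps)$, apart from the harmless additive term $\log r$ in $\theta$, whose derivative in $N$ vanishes. Multiplying $\theta'(N)=0$ by $N^2$ gives the equation
\[
F(N)\coloneqq N^2\log d - N - N\psi(N+2) + \log\Gamma(N+2) = \lambda .
\]
Since $N^*$ is the unique minimiser of the strictly convex $\phi$ from \autoref{eq:minphi}, it is the unique zero of $\theta'$. The map $\lambda\mapsto\theta'$ is pointwise decreasing, with $\partial_\lambda\theta' = -1/N^2<0$. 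Together with $\theta''>0$ at the zero, the implicit function theorem gives
\[
\frac{\d N^*}{\d\lambda} = \frac{1}{N^2\theta''(N^*)}>0 .
\]
Equivalently, $F'(N)=N^3\theta''(N)>0$, so $F$ is strictly increasing. Because $\lambda$ is strictly increasing in $r$ (its derivative is $1+1/r$) and strictly decreasing in $\eps$, $N^*$ is strictly monotone in the stated directions.

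For the asymptotics, first note that $\lambda\to\infty$ in both regimes, and $F$ is increasing and unbounded, so $N^*\to\infty$. Next I expand $F$ as $N\to\infty$. Stirling gives $\log\Gamma(N+2)=N\log N - N + \mathrm O(\log N)$ and $\psi(N+2)=\log N+\mathrm O(1/N)$. Hence
\[
F(N)=N^2\log d - 2N + \mathrm O(\log N).
\]
The $N\log N$ terms cancel, which is the one point to check carefully. Inverting $F(N^*)=\lambda$ then gives $N^{*2}\log d = \lambda + 2N^* + \mathrm O(\log N^*)$, and so
\[
N^* = \sqrt{\lambda/\log d} + \frac{1}{\log d} + \mathrm o(1).
\]
In particular $N^*\sim\sqrt{\lambda/\log d}$.

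For $m^*$, write
\[
\log m^* = \frac{\lambda + N^*\log r - \log\Gamma(N^*+2)}{N^*}.
\]
This splits into $\lambda/N^* + \log r - \log N^* + 1 + \mathrm o(1)$. The main obstacle is that the target $r\sqrt{\log d/\lambda}\,\exp(\sqrt{\lambda\log d})$ is an $\sim$ statement, so $\log m^*$ must be controlled up to $\mathrm o(1)$. That requires the second-order correction to $N^*$, not just its leading term. Using the first-order condition, I substitute $\lambda = N^{*2}\log d - 2N^* + \mathrm O(\log N^*)$ into $\lambda/N^*$. This gives $N^*\log d - 2 + \mathrm O(\log N^*/N^*)$.

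Write $N^*=\sqrt{\lambda/\log d}+c+\delta$, with $c=1/\log d$ and $\delta\to 0$. The quantity $N^*\log d$ must then be compared with $\sqrt{\lambda\log d}$, and the constants $1+c\log d-2 = 0$ must cancel. Checking this cancellation precisely, including whether the $\mathrm O(\log N)$ remainder in $F$ is really $\mathrm o(1)$ after division by $N$ and the implicit terms in $\delta$, is where I expect the work. If the constants do not cancel exactly, I would track the $\tfrac12\log(2\pi N)$ term from Stirling and the $1/(2N)$ term in $\psi$. The aim is a bound $\delta=\mathrm O(\log N^*/N^*)$, which makes $N^*\log d=\sqrt{\lambda\log d}+1+\mathrm o(1)$.

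Exponentiating then gives $m^*\sim r\,\exp(\sqrt{\lambda\log d})/N^*$ times a constant that should equal $1$. Finally, substituting $N^*\sim\sqrt{\lambda/\log d}$ yields the claimed formula.
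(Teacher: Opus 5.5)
Your proposal is correct and is essentially the paper's argument: the first-order condition $\lambda = F(N^*)$, implicit differentiation for monotonicity, and the Stirling/digamma expansion giving $N^* = \sqrt{\lambda/\log d} + 1/\log d + \mathrm o(1)$. Your control of $\log m^*$ to $\mathrm o(1)$ precision also matches, and the constants do cancel exactly as you found, so your fallback plan is unnecessary (the paper gets there slightly more directly via the exact identity $(\lambda - \log\Gamma(N^*+2))/N^* = N^*\log d - 1 - \psi(N^*+2)$).

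One small slip: at the root one has $F'(N^*) = N^{*2}\theta''(N^*)$ rather than $N^3\theta''$, and $F$ is not increasing on all of $(0,\infty)$ since $F'(0) = -1$. This is harmless, because $N^* \to \infty$ still follows from continuity of $F$ together with $F(N^*) = \lambda \to \infty$.
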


We mention that the proof of the theorem above identifies the sharper approximate value of $N^*$ \eqref{eq:preciseN*} (necessary for the asymptotics of $m^*$), which may be of value for fixed choices of $r,\eps$.

\begin{proof}[Proof of \autoref{thm:asympN*m*}]
Since $N^*$ is the unique solution to $\phi'(N) = 0$, equivalent to $\theta'(N) = 0$, following \eqref{eq:theta} we consider the function
\begin{equation}\label{eq:alphaNlambda}
\alpha(N, \lambda) = \log d -\frac{1 + \psi(N+2)}{N} -\frac{\lambda - \log \Gamma(N+2)}{N^2}, \qquad N > 0, \ \lambda \geq 0
\end{equation}
so that $\alpha(N, \lambda(\eps,r)) = \theta'(N)$. Writing $N^* = N^*(\lambda)$ and differentiating implicitly
\[
\frac{\d N^*}{\d \lambda}(\lambda) = - \frac{\partial_\lambda \alpha(N^*(\lambda), \lambda)}{\partial_N \alpha(N^*(\lambda), \lambda)} = \frac{1}{N^*(\lambda)^2 \theta''(N^*(\lambda))} > 0.
\]
Since $\lambda(\eps,r)$ \eqref{eq:lambda} is strictly increasing in $r$ and strictly decreasing in $\eps$, the first claim follows.

The equation $\alpha(N, \lambda) = 0$ reads
\begin{equation}\label{eq:lambdaequals}
\lambda = N^2 \log d - N(1 + \psi(N+2)) + \log \Gamma(N+2).
\end{equation}
By Stirling's approximation $\log \Gamma(N+2)  \sim N\log N - N + \mathrm{O}(\log N)$ and by the series representation of the digamma function \cite[(6.3.18)]{handbook} $\psi(N+2) \sim \log N + \mathrm{O}(1/N)$. Substituting into \eqref{eq:lambdaequals} and viewing $N^* = N^*(\lambda)$ we obtain
\begin{equation}\label{eq:lambdaequals2}
\lambda = N^*(\lambda)^2 \log d - 2N^*(\lambda) + \mathrm{O}(\log N^*(\lambda))
\end{equation}
From this the leading asymptotics
\[
N^*(\lambda) \sim \sqrt{\frac{\lambda}{\log d}}
\]
follow, and substituting in $\lambda = \lambda(\eps,r)$ proves the claims regarding the asymptotics of $N^*$ as a function of $r$ and $\eps$. Now subtract $\lambda = \log d (\sqrt{\lambda/\log d})^2$ from both sides, and obtain
\[
\log d \bigg( (N^*)^2 - \frac{\lambda}{\log d} \bigg) = 2N^* + \mathrm{O}(\log N^*)
\]
Factoring the difference of squares, and using the fact that $N^* + \sqrt{\lambda/\log d} \sim 2N^*$ obtain
\[
N^* - \sqrt{\frac{\lambda}{\log d}} = \frac{1}{\log d} + \mathrm{O}\bigg(\frac{\log N^*}{N^*} \bigg)
\]
which can be rewritten as
\begin{equation}\label{eq:preciseN*}
N^* = \sqrt{\frac{\lambda}{\log d}} + \frac{1}{\log d} + \mathrm{o}(1)
\end{equation}
We turn to the asymptotics for $m^*$, which can be written as a function of lambda as
\[
m^*(N) = r \bigg( \frac{e^\lambda}{\Gamma(N+2)} \bigg)^{1/N} \implies \log m^*(N) = \log r + \frac{\lambda - \log \Gamma(N+2)}{N}.
\]
Since \eqref{eq:lambdaequals} can be rewritten as
\[
\frac{\lambda - \log \Gamma(N^*+2)}{N^*} = N^*\log d - 1 - \psi(N^*+2),
\]
matching terms, using \eqref{eq:preciseN*} and the asymptotics of the digamma function \cite[(6.3.18)]{handbook} $\psi(N+2) = \log N + \mathrm{o}(1)$ we obtain for $m^* = m^*(N^*)$
\begin{align*}
\log m^*
&= \log r + N^*\log d - 1 - \psi(N^*+2) \\
&= \log r + \sqrt{\lambda \log d} - \frac 12 \log \lambda + \frac 12 \log \log d + \mathrm{o}(1).
\end{align*}
Exponentiating yields the statement on the asymptotics for $m^*$.
\end{proof}

Finally, we analyse the asymptotics of the storage $f_d$ evaluated at a convenient choice of $N$ and $m$ (close to the optimum) and compare them to those of the naive solutions given by taking one of $N, m$ to be equal to $1$ and solving \eqref{eq:relaxed} for the other:
\begin{equation}\label{eq:naiveaxis}
N^\circ \coloneqq \min \bigg\{ N:  \frac{r^{N+1}e^r}{(N+1)!} \leq \eps \bigg\}, \qquad m^\circ \coloneqq \frac{r^2e^r}{2\eps}.
\end{equation}
We prove a result that should be interpreted as saying that there are choices of $(N, m)$ that are genuinely better than either saving the path as $(1, m^\circ)$. We do not show the corresponding statement for $(N^\circ, 1)$ but verify it experimentally in \autoref{fig:naive}; note that $N^\circ < \infty$ for any $r$ thanks to factorial decay.

\begin{proposition}\label{thm:optimal1var}
Recall the definition of the function $\lambda(\eps, r)$ \eqref{eq:lambda}. Let
\[
\widetilde N \coloneqq \sqrt{\frac{\lambda(\eps,r)}{\log d}}, \qquad \widetilde m \coloneqq m^*(\widetilde N).
\]
We have the bound
\[
f_d(\widetilde N, \widetilde m) \leq \frac{r d \log d}{(d-1)\lambda(\eps,r)} \exp\big(1 + 2 \sqrt{\lambda(\eps,r)\log d}\big) \in \mathrm{o}(f_d(1, m^\circ))
\]
as both $r \to +\infty$ and $\eps \to 0^+$ with the other fixed.
\end{proposition}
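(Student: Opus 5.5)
The plan is to compute $f_d(\widetilde N,\widetilde m)$ directly from the closed form \eqref{eq:phisub}, i.e.\ $f_d(\widetilde N,\widetilde m)=\phi(\widetilde N)$, and to bound the only non-explicit factor, namely $\Gamma(\widetilde N+2)^{-1/\widetilde N}$, with an elementary lower bound on the Gamma function. Write $\lambda=\lambda(\eps,r)$ and $\widetilde N=\sqrt{\lambda/\log d}$, so that $\widetilde N\log d=\sqrt{\lambda\log d}$ and $\widetilde N^2\log d=\lambda$. As in the proof of \autoref{thm:asympN*m*}, write $\widetilde m = r\,(e^\lambda/\Gamma(\widetilde N+2))^{1/\widetilde N}$. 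Then
\[
f_d(\widetilde N,\widetilde m)=\frac{d\, r}{(d-1)\widetilde N}\exp\Big(\widetilde N\log d+\frac{\lambda}{\widetilde N}-\frac{\log\Gamma(\widetilde N+2)}{\widetilde N}\Big),
\]
and the first two terms in the exponent are each $\sqrt{\lambda\log d}$, together giving $2\sqrt{\lambda\log d}$.

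Next I would bound $\log\Gamma(\widetilde N+2)$ from below. The target has an extra factor $1/\widetilde N$ relative to the prefactor $\frac{dr}{(d-1)\widetilde N}$ (since $\log d/\lambda = 1/\widetilde N^2$), and a factor $e^{1}$. So I need
\[
-\frac{\log\Gamma(\widetilde N+2)}{\widetilde N}\le 1-\log\widetilde N,
\]
i.e.\ $\Gamma(\widetilde N+2)\ge (\widetilde N/e)^{\widetilde N}$. This follows from the standard bound $\Gamma(x+1)\ge (x/e)^x$ for $x>0$. That bound can be proved from $e^x\ge x^x/\Gamma(x+1)$, a term of the series at non-integer index; more cleanly, I would take logs and use $\log\Gamma(x+1)\ge x\log x - x$, which holds by comparing $\int_0^x\log t\,dt$ with the log-convexity of $\Gamma$. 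Since $\Gamma(\widetilde N+2)=(\widetilde N+1)\Gamma(\widetilde N+1)\ge \Gamma(\widetilde N+1)$ whenever $\widetilde N\ge 0$, the needed bound follows, at least for $\widetilde N\ge1$; otherwise one handles the $(\widetilde N+1)$ factor directly. Combining gives $\frac{dr}{(d-1)\widetilde N^2}e^{1+2\sqrt{\lambda\log d}}$, which is the stated bound after substituting $\widetilde N^2=\lambda/\log d$.

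Finally, for the little-o claim I would compare with $f_d(1,m^\circ)=\frac{d^2}{d-1}\cdot\frac{r^2e^r}{2\eps}=\frac{d^2 r}{2(d-1)}e^{\lambda}$. The ratio is then $\lesssim \frac{\log d}{\lambda}\exp(1+2\sqrt{\lambda\log d}-\lambda)$. As $r\to\infty$ or $\eps\to0^+$, $\lambda\to\infty$, and $2\sqrt{\lambda\log d}-\lambda\to-\infty$, so the ratio tends to $0$. The main obstacle is only the Gamma lower bound with the exact constant $e$; I would first try the integral comparison $\log\Gamma(x+1)\ge\int_0^{x}\log t\,\d t$, using that $\log\Gamma$ is increasing-convex, before resorting to Stirling with explicit error.
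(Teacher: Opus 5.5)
Your proposal is correct and is essentially the paper's proof. The paper also bounds $\log\phi(\widetilde N)$ via \eqref{eq:theta}, using $\log\Gamma(N+2)\geq N\log N-N$ to produce the extra factor $e/\widetilde N$, and then compares with $f_d(1,m^\circ)=\phi(1)=\frac{d^2re^\lambda}{2(d-1)}$. Your real-variable justification of the Gamma bound (monotonicity of $\psi$ from log-convexity gives $\psi(t+1)\geq\log t$, then integrate over $[0,x]$) is slightly more careful than the paper's sum--integral comparison, which implicitly treats $\widetilde N$ as an integer. Two remarks should be dropped, though. The ``series term at non-integer index'' remark is not an argument. The $\widetilde N\geq 1$ caveat is unnecessary, since $\Gamma(\widetilde N+2)=(\widetilde N+1)\Gamma(\widetilde N+1)\geq\Gamma(\widetilde N+1)$ holds for every $\widetilde N\geq 0$.
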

\begin{proof}
We have, using
\[
\log (N+1)! = \sum_{k = 1}^{N+1} \log k \geq \int_1^{N+1} \log x \d x \geq N \log N - N,
\]
the bound on $\theta$, using \eqref{eq:theta}
\begin{align*}
	\theta(N) &= (N+1)\log d - \log(d-1) - \log N + \log r + \frac{\lambda - \log \Gamma(N+2)}{N} \\
	&\leq (N+1)\log d - \log(d-1) - \log N + \log r + \frac{\lambda}{N} - \log N + 1.
\end{align*}
Substituting $N = \widetilde N$ we obtain
\begin{align*}
	\theta(\widetilde N) \leq 2 \sqrt{\lambda \log d} - \log \lambda + \log \Big( r \frac{d}{d-1} \log d  \Big) + 1
\end{align*}
and the claimed bound follows by exponentiating. Now consider 
\[
f_d(1, m^\circ) = \phi(1) =  \frac{r^2e^r}{2\eps}\frac{d^2}{(d-1)} = \frac{d^2 r e^\lambda}{2(d-1)}.
\]
On the other hand, we have just shown that
\[
f_d(\widetilde N, \widetilde m) \lesssim_d \frac{r}{\lambda} \exp\big( 2\sqrt{\lambda \log d} \big),
\]
and therefore
\[
\frac{f_d(\widetilde N, \widetilde m)}{f_d(1,m^\circ)} \lesssim_d \lambda^{-1} \exp \big( 2 \sqrt{\lambda \log d} - \lambda \big) \to 0
\]
since $\lambda \to \infty$ as either $r \to \infty$ and $\eps \to 0$.
\end{proof}

\begin{remark}\label{rem:allCDEs}
	We briefly comment on one of the reasons for our choice of considering linear CDEs and not more general non-linear ones $\d Y = V(Y)\d X$. Level-$N$ Euler estimates with the same decay in $m$ also exist for such equations (e.g.\ \cite[Theorem 10.30]{FV10} with $p = 1$). An important distinction from the linear case is that the factorial is not present in the error estimate \autoref{thm:euler1}, due to a combinatorial coefficients arising from the iteration of vector field composition. It is therefore not the case that the error decays as $N \to \infty$ for every $m$, although it does for $m$ sufficiently high (depending on $r$). The same occurs for the log-ODE method, also in the linear case, see \eqref{eq:logODE} below and related discussion. More practically, it would be much more difficult to sample choices of $V$ with $\|V\|_{\mathrm{Lip}^\gamma} = r$ in a way that is expected to be representative, compared to sample matrices with $|A| = r$.
\end{remark}

\begin{remark}
	The computational cost of exponentiating the log-signature in the Euler scheme may not be negligible for high-enough $N$; if this is considered unacceptable, one could store signatures instead of log-signatures, and modify the problem by replacing $\Lambda_d(N)$ with $\mathrm{dim}(T_N(\mathbb R^d))$ \eqref{eq:dimT}. This change does not affect the asymptotics derived in this section, but we observe experimentally \cite{code} that it may still have a small effect of decreasing $N^*$ and increasing $m^*$ for fixed choices of the parameters.
\end{remark}

In practice, it is possible to solve the original integer-valued minimisation problem of the exact storage cost $m\Lambda_d(N)$ subject to the Euler inequality $\leq \eps$ constraint, by computing for $N$ up to some $N_\mathrm{max}$ $\lceil m^*(N) \rceil\Lambda_d(N)$ (in fact the exact value of $m$ for each $N$ at which the constraint starts being true can be located with a small binary search) and safely stopping once $\Lambda_d(N)$ exceeds the optimum over $n < N$. The evaluation of $\Lambda_d(N)$, computed by summing Witt's formula \eqref{eq:witt} from $1$ to $N$ is instantaneous for all reasonable values of $d, N$. We refer to \autoref{fig:constrained}, \autoref{fig:one-var-sweeps} and \autoref{fig:naive} for some empirical validations of our results.

\begin{figure}[h!]
	\centering
	\includegraphics[width=0.6\textwidth]{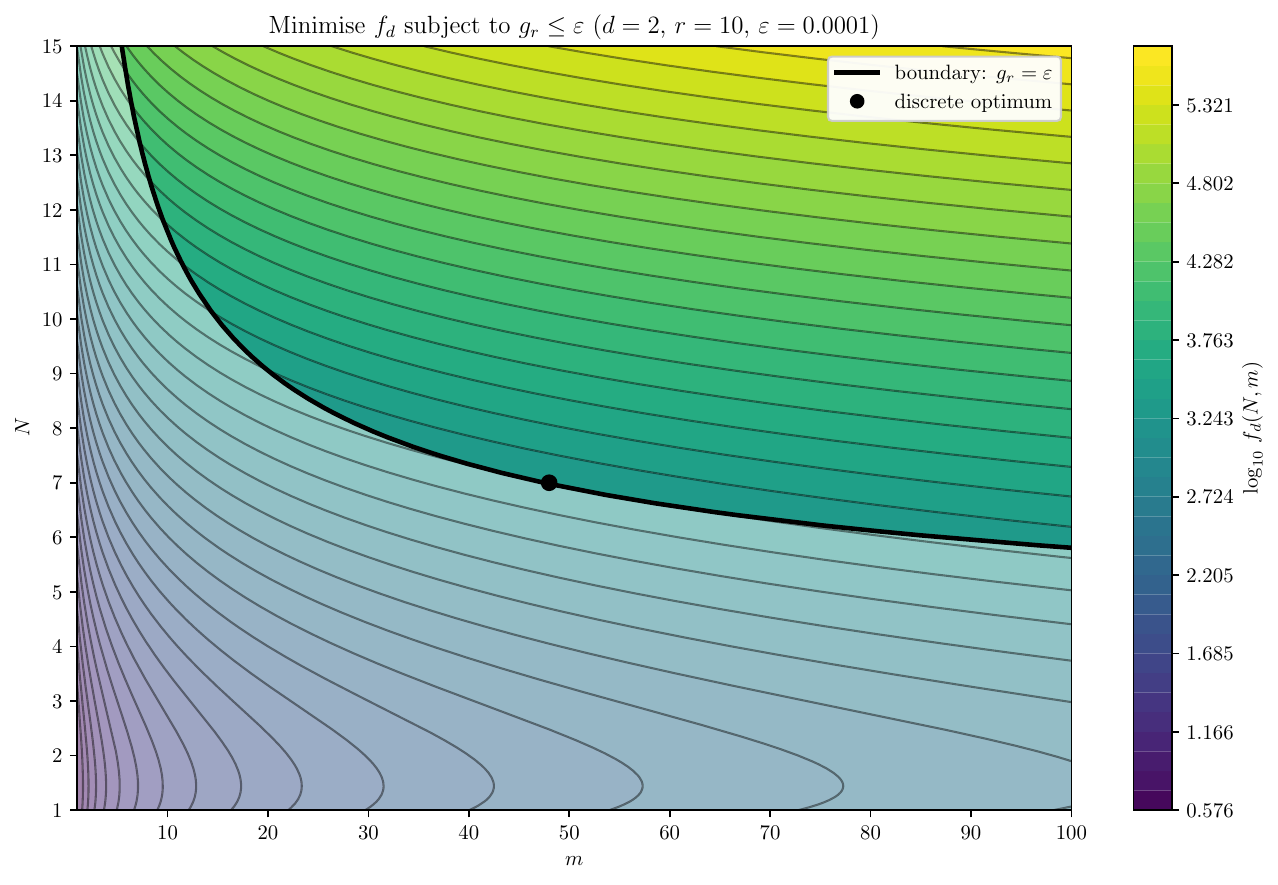}
	\caption{Plot of the constrained optimisation problem \eqref{eq:relaxed}. We also compute the discrete minimum of the actual storage cost $m\Lambda_d(N)$ (the black dot). As one would expect, this occurs at the point of tangency of the level sets of $f_d$ and the feasibility boundary of the constraint. The infeasible region $g_r > \eps$, closer to the axes, is shaded. One can verify, both in this and in other plots, that $f_d(N,m)$ is an excellent proxy for $m\Lambda_d(N)$ even non-asymptotically, and the resulting discrete problem most often has the same solution.}
	\label{fig:constrained}
\end{figure}

\begin{figure}[h!]
	\centering
	\setlength{\tabcolsep}{1pt}
	\begin{tabular}{@{}cc@{}}
		\begin{minipage}{0.4\textwidth}
			\centering
			\includegraphics[width=\textwidth]{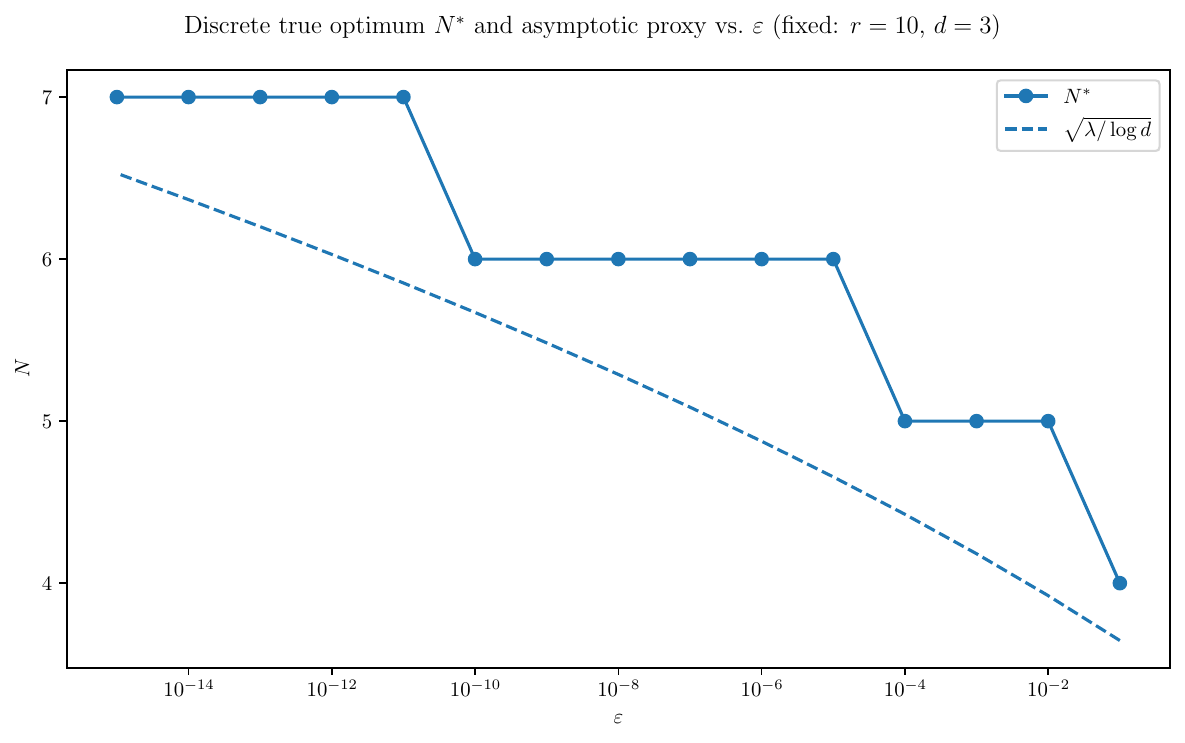}
		\end{minipage}
		&
		\begin{minipage}{0.4\textwidth}
			\centering
			\includegraphics[width=\textwidth]{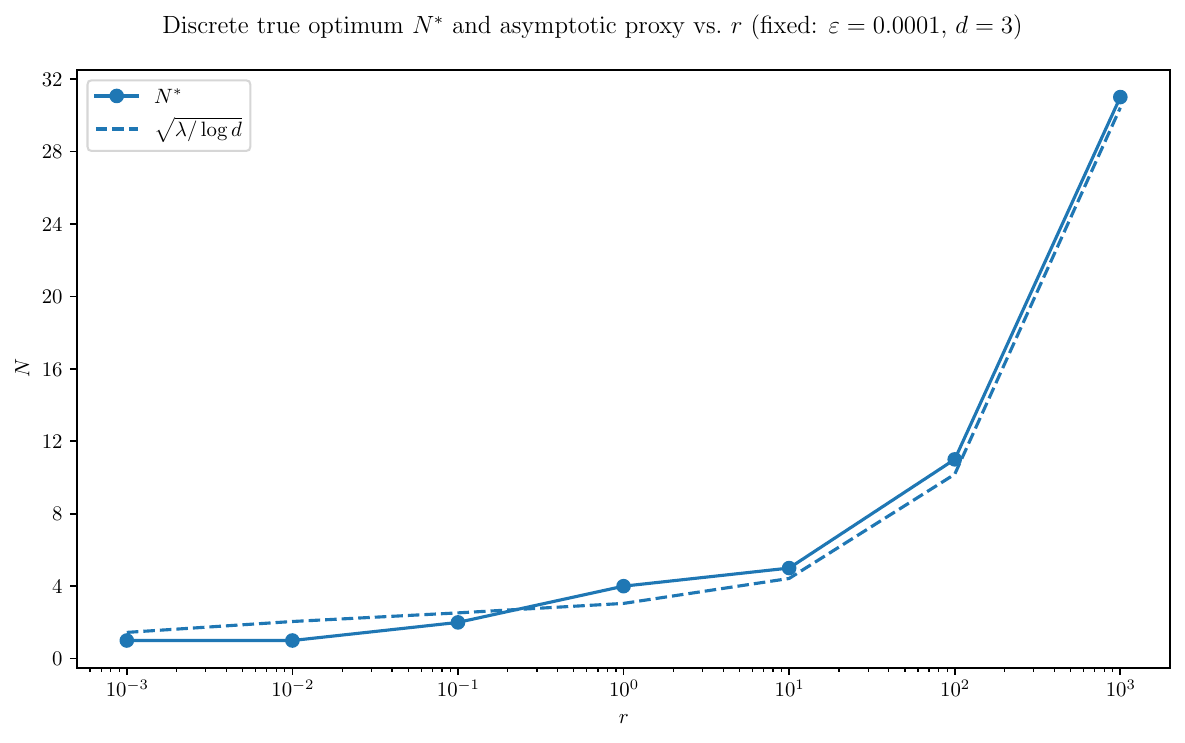}
		\end{minipage}
		\\[0.5em]
		\begin{minipage}{0.4\textwidth}
			\centering
			\includegraphics[width=\textwidth]{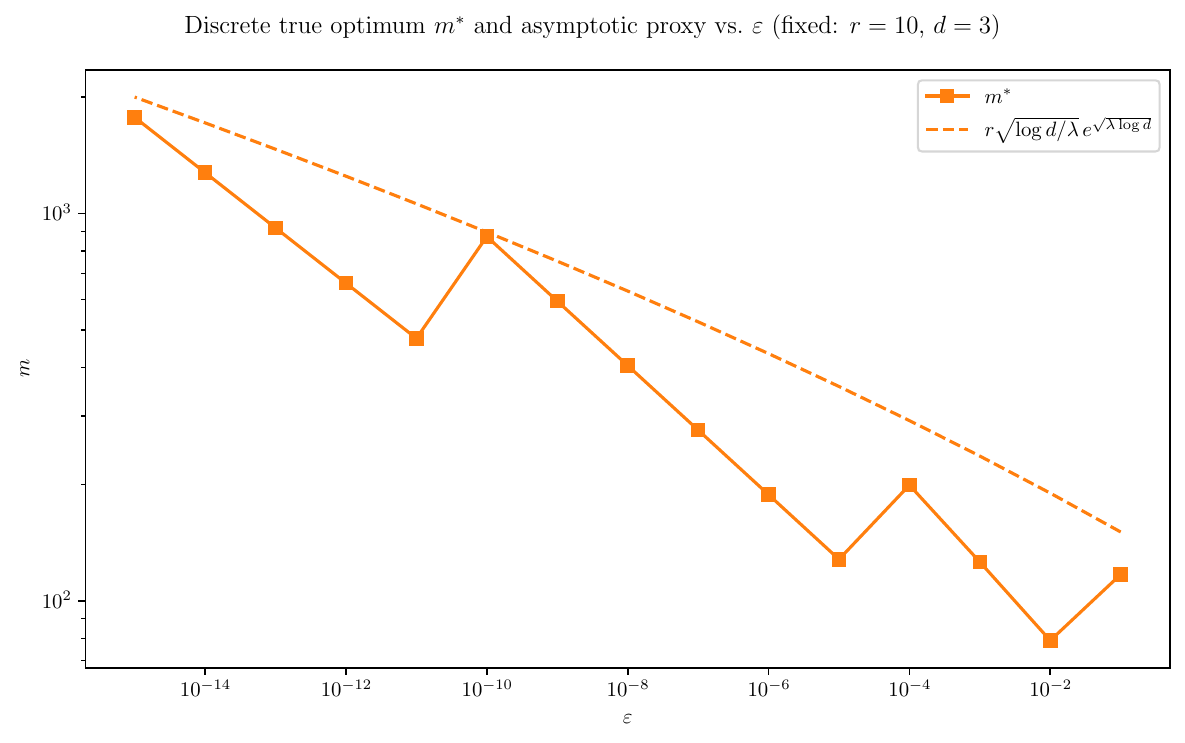}
		\end{minipage}
		&
		\begin{minipage}{0.4\textwidth}
			\centering
			\includegraphics[width=\textwidth]{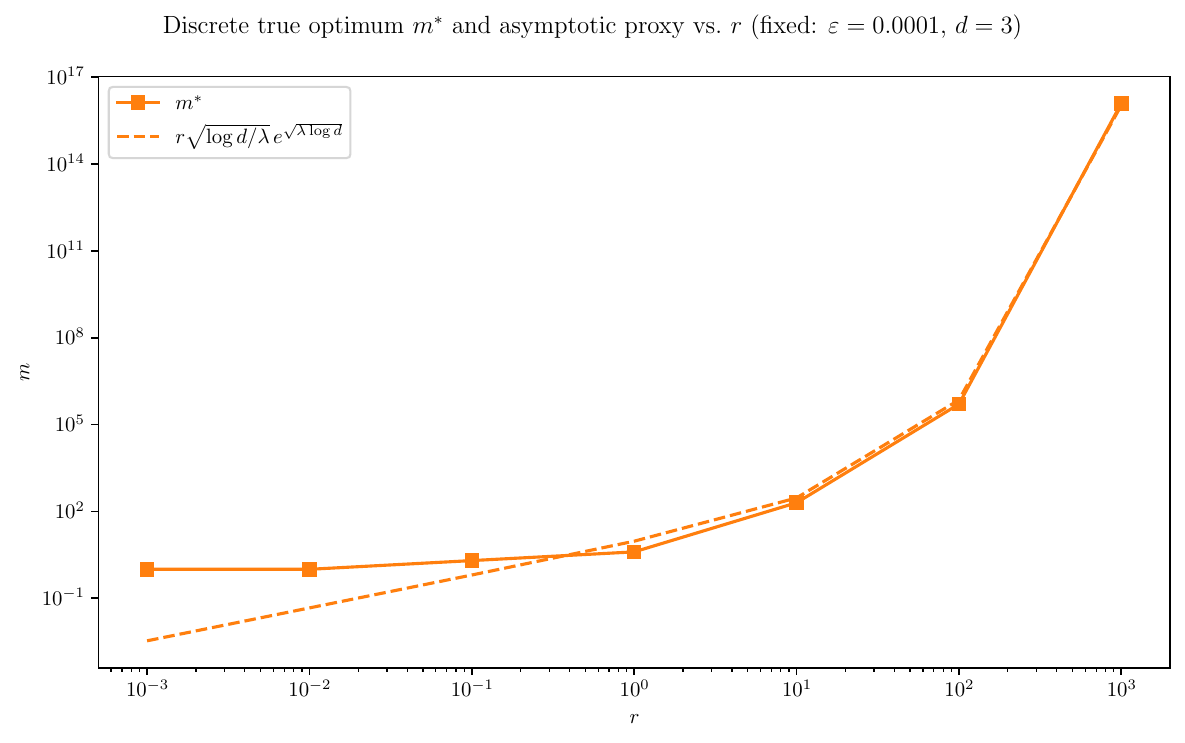}
		\end{minipage}
	\end{tabular}
	\caption{Comparison of the discrete optimum with true cost function $m\Lambda_d(N)$ with the asymptotics of \autoref{thm:asympN*m*}. Across most choices of the fixed parameters, the proxies track the true optima very well, even in the non-asymptotic regime.}
	\label{fig:one-var-sweeps}
\end{figure}

\begin{figure}[h!]
	\centering
	\begin{minipage}{0.4\textwidth}
		\centering
		\includegraphics[width=\textwidth]{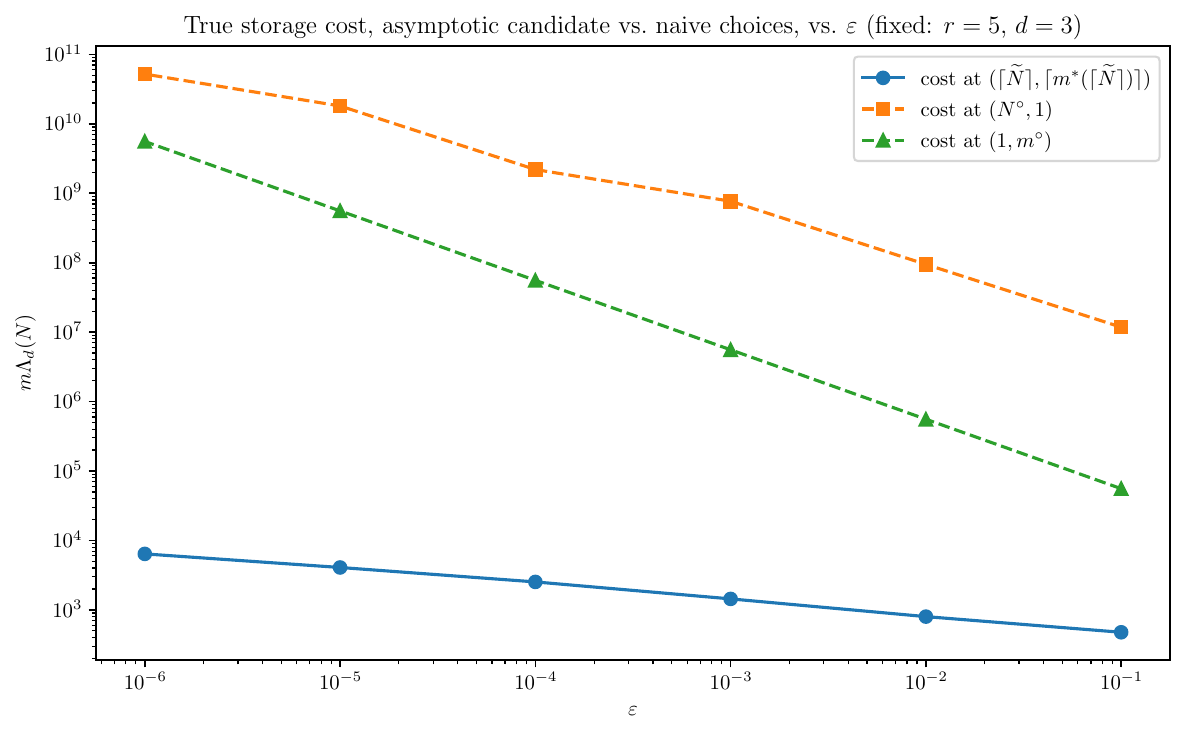}
	\end{minipage}
	\begin{minipage}{0.4\textwidth}
		\centering
		\includegraphics[width=\textwidth]{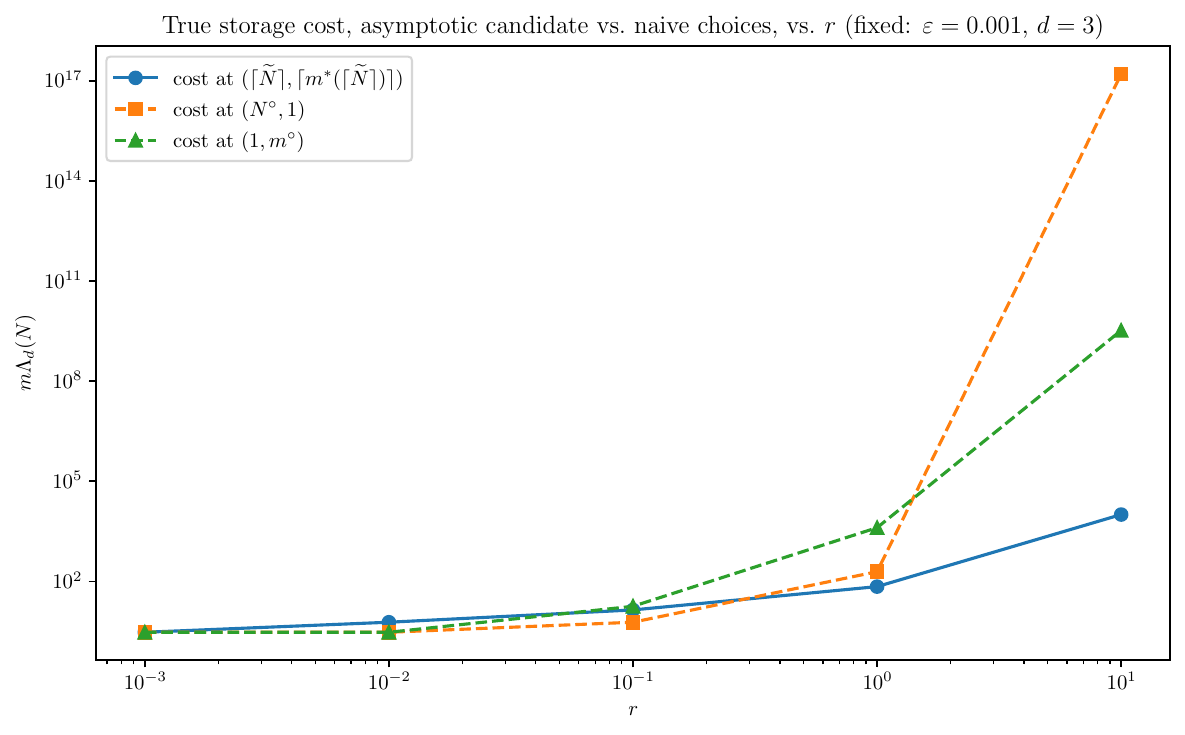}
	\end{minipage}
	
	\caption{Comparison of the true storage cost of the proxy $(\widetilde{N}, m^*(\widetilde{N}))$ for the optimum (or rather integer values of these which are guaranteed to be feasible, needed to compute the cost) with that of the naive choices \eqref{eq:naiveaxis}, as $\eps$ and $r$ vary, corroborating the ``strictness'' claim of \autoref{thm:optimal1var} and adding a similar one for the choice $(N^\circ, 1)$.}
	\label{fig:naive}
\end{figure}

\section{Paths of bounded $p$-variation}\label{sec:pvar}

We now relax the restriction on $X$ being of bounded variation to the requirement that it be of finite $p$-variation. In this case $X$ alone does not define solutions to differential equations. As shown by \cite{Lyo98}, the natural extension is a \emph{rough path} $\bX$. We refer to \cite{LCL, FV10} for background on rough path theory. We emphasise the following crucial fact, which will be explained in due course: the material in this section is also relevant for $C^\infty$ paths.

Let $p \in [1,\infty)$ and define the constant
\begin{equation}\label{eq:beta}
\beta_p \coloneqq \frac{p}{1 - 2^{1-\frac{\p + 1}{p}}} .
\end{equation}
Following \cite[p.240-241]{Lyo98}, consider the homogeneous norm on $\mathcal G_\p(\mathbb R^d)$ defined as follows, given the choice of a cross norm on tensor powers of $\bbR^d$:
\begin{equation}\label{eq:homnorm}
\vertiii{g} \coloneqq \vertiii{g}_p \coloneqq \max_{1 \leq i \leq \p} \big\{ \big(\beta_p (i/p)! |g_i| \big)^{1/i} \big\} .
\end{equation}
Let $\bX$ be a $p$-rough path defined on $[0,T]$, i.e. $\omega_p(0,T) < \infty$ with the control $\omega_p$ (recall the definition \cite[Definition 1.6]{FV10} of a control as a superadditive, continuous function which vanishes on the diagonal) defined by
\begin{equation}\label{eq:pvar}
\omega_p(s,t) \coloneqq \omega^{\bX}_p(s,t) \coloneqq \| \bX \|_{p\text{-}\mathrm{var}}^p \coloneqq \sup_{\pi \in \Pi[s,t]} \sum_{[u,v] \in \pi} \vertiii{\bX_{u,v}}^p.
\end{equation}
Then
\begin{equation}\label{eq:factless}
|\bX^n_{s,t}| \leq \frac{\vertiii{\bX_{s,t}}^n}{\beta_p (n/p)!} \leq \frac{\omega(s,t)^{n/p}}{\beta_p(n/p)!}, \qquad n = 1,\ldots,\p .
\end{equation}

The following lemma will be used to guarantee existence of a good choice of the partition.
\begin{lemma}\label{lem:pi}
	Given a control $\omega$ on $[0,T]$, for any $m$ there exists a partition with $m$ intervals $\pi = \{0 = t_0, t_1,\ldots, t_m = T\}$ such that $\omega(t_i,t_j) \leq \frac{j-i}{m} \omega(0,T)$ for all $i,j = 0,\ldots,m$ with $i < j$.
\end{lemma}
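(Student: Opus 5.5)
Set $W \coloneqq \omega(0,T)$; if $W = 0$ superadditivity forces $\omega \equiv 0$ on $[0,T]$ and any partition works, so assume $W > 0$. The natural choice is to place the points where the ``cumulative control'' $t \mapsto \omega(0,t)$ crosses the levels $kW/m$. Since $\omega$ is continuous and $\omega(0,0) = 0$, the map $t \mapsto \omega(0,t)$ is continuous on $[0,T]$, with value $0$ at $0$ and $W$ at $T$. Superadditivity gives $\omega(0,s) + \omega(s,t) \le \omega(0,t)$ for $s<t$, so the map is also nondecreasing. By the intermediate value theorem we can define
\[
t_k \coloneqq \inf\{t \in [0,T] : \omega(0,t) \ge kW/m\}, \qquad k = 0,\ldots,m,
\]
so that $t_0 = 0$, $\omega(0,t_k) = kW/m$ by continuity, and $t_0 \le t_1 \le \cdots \le t_m$. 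We can take $t_m = T$: if $\omega(0,t) = W$ already for some $t<T$, simply redefine $t_m \coloneqq T$, which keeps $\omega(0,t_m) = W$.

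The key estimate then follows from superadditivity. For $i<j$,
\[
\omega(t_i,t_j) \le \omega(0,t_j) - \omega(0,t_i) = \frac{jW}{m} - \frac{iW}{m} = \frac{j-i}{m}\,W .
\]
This is the inequality claimed in the lemma. We never need the reverse inequality or additivity; this matters because, unlike in the $1$-variation case, $\omega_p$ is only superadditive.

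The main obstacle is strictness of the partition: if $\omega(0,\cdot)$ is constant on a subinterval, consecutive $t_k$ as defined could coincide, and then $\pi$ would have fewer than $m$ genuine intervals. Only the case $t_{k-1} = t_k$ needs fixing, since strict monotonicity of the levels $kW/m$ already gives $t_{k-1} \le t_k$ and $\omega(0,t_{k-1}) < \omega(0,t_k)$ whenever the defining infima differ. The plan is to nudge coinciding points apart while preserving the bound, using a monotonicity fact that follows from superadditivity and $\omega \ge 0$: $\omega(s,t) \le \omega(s',t')$ whenever $[s,t] \subseteq [s',t']$. Thus moving points only shrinks the intervals $[t_i,t_j]$ and cannot increase the left-hand side.

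If instead one wants a construction with no collisions to begin with, replace $\omega$ by the strictly increasing perturbation $\omega_\delta(s,t) \coloneqq \omega(s,t) + \delta(t-s)$. This is again a continuous superadditive control vanishing on the diagonal, and $\omega_\delta(0,\cdot)$ is strictly increasing, so the construction above produces strictly increasing $t_k$. We then have
\[
\omega(t_i,t_j) \le \omega_\delta(t_i,t_j) \le \frac{j-i}{m}\,\omega_\delta(0,T) = \frac{j-i}{m}\,(W + \delta T).
\]
This only yields the bound up to $\delta$, and passing $\delta \to 0$ may merge points again. So I would present the $\delta$-argument as a sanity check and take the infimum construction (allowing degenerate intervals, which contribute nothing to the Euler error estimates where the lemma is used) as the actual proof.
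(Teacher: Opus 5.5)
Your proof is correct and is essentially the paper's: place $t_k$ where the continuous nondecreasing map $t\mapsto\omega(0,t)$ reaches the level $k\,\omega(0,T)/m$ (the paper cites the inductive intermediate-value construction of \cite[Lemma A.3]{maud2}), then conclude with $\omega(t_i,t_j)\le\omega(0,t_j)-\omega(0,t_i)$ by superadditivity.

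The ``main obstacle'' you raise does not exist. You already showed $\omega(0,t_k)=kW/m$, and these values are distinct for distinct $k$ when $W>0$. So $t_0<t_1<\cdots<t_m$ holds automatically, and your construction already yields a genuine partition with $m$ intervals. You therefore do not need to allow degenerate intervals, and neither the $\delta$-perturbation nor the nudging argument is needed. The nudging claim that moving points ``only shrinks'' the intervals $[t_i,t_j]$ is false anyway: moving $t_k$ to the right enlarges every $[t_i,t_k]$ with $i<k$.
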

\begin{proof}
	We use the same choice as \cite[Lemma A.3]{maud2}, who argue inductively using the intermediate value theorem that there exists a partition such that $\omega(0,t_k) = \frac km \omega(0,T)$. Then by superadditivity it additionally holds that
	\[
	\omega(t_i,t_j) \leq \omega(t_0,t_j) - \omega(t_0,t_i) = \frac{j-i}{m} \omega(0,T) . \qedhere
	\]
\end{proof}

\begin{remark}\label{eq:optpart}
	For a time series, finding an approximately equal length split has linear complexity in the number of observations (one may compute the cumulative sums of $|X_{t_i}-X_{t_{i-1}}|$ and then locate the partition points), whereas for the $p$-variation control with $p>1$ the analogous problem is computationally harder, since even computing the ordinary $p$-variation $\| X \|_{\var{p}}$ requires an optimisation over partitions.
\end{remark}

We now state the factorial decay theorem for the signature of a $p$-rough path. We use the formulation of \cite[Theorem 2.3]{LX13}, which differs slightly from the original \cite[Theorem 2.2.1]{Lyo98}, namely in the following two aspects: (i) the sharpened version of the neo-classical inequality due to \cite{HH10}
\begin{equation}\label{eq:neoclassical}
\sum_{k = 0}^n \binom{n/p}{k/p} x^{k/p} y^{(n-k)/p} \leq p(x+y)^{n/p}	
\end{equation}
(which replaces $p^2$ with $p$) is used, and (ii) the proof performs diadic refinement (see e.g.\ \cite{FH20}) rather than coarsening the partition in a particular order, which gives rise to the form of the constant $\beta_p$ in \eqref{eq:beta}.

\begin{theorem}[Factorial decay of the signature, {\cite[Theorem 2.2.1]{Lyo98}}]\label{thm:factdec}
For a $p$-rough path $\bX$,
\[
|\Sig^n(\bX)_{s,t}| \leq \frac{\omega_p(s,t)^{n/p}}{\beta_p(n/p)!}
\]
for all $n \geq 1$, where $\beta_p$ is defined in \eqref{eq:beta} and $\omega_p$ in \eqref{eq:pvar}.
\end{theorem}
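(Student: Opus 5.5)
We follow Lyons' original extension argument \cite[Theorem 2.2.1]{Lyo98}, with the modifications indicated above: the sharpened neo-classical inequality \eqref{eq:neoclassical} and dyadic refinement instead of coarsening. The proof is by induction on $n$. For $n \leq \p$ the claim is exactly \eqref{eq:factless}. Assume the bound holds for all levels $\le n$ (with $n \geq \p$) and for all $s<t$, and consider level $n+1$. Since levels $\le n$ are multiplicative, the level-$(n+1)$ component is obtained as a limit over partitions of $[s,t]$ of the level-$(n+1)$ part of the products of $\Sig_n(\bX)_{u,v}$. It therefore suffices to bound these products uniformly in the partition and pass to the limit.

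Fix a partition $D$ of $[s,t]$. We compare the product over $D$ with the product over $D$ with one point removed, say $u$, whose neighbours are $u^-$ and $u^+$. At level $n+1$ the two products differ only by a term of the form
\[
\sum_{k=1}^{n} \bX^k_{u^-,u}\otimes \bX^{n+1-k}_{u,u^+},
\]
because the level-$(n+1)$ part of $\Sig_n(\bX)_{u^-,u^+}$ is absent. By the inductive hypothesis and \eqref{eq:neoclassical} with $x=\omega(u^-,u)$, $y=\omega(u,u^+)$, this difference is bounded by
\[
\frac{1}{\beta_p^2}\sum_{k}\frac{x^{k/p}y^{(n+1-k)/p}}{(k/p)!\,((n+1-k)/p)!}\le \frac{p\,\omega(u^-,u^+)^{(n+1)/p}}{\beta_p^2\,((n+1)/p)!},
\]
using $\binom{a}{b}=a!/(b!(a-b)!)$ in Gamma form.

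For the organisation of the sum, in the dyadic-refinement version we take $\omega(s,t)$-dyadic points: level $j$ of the refinement consists of $2^j$ intervals of control at most $2^{-j}\omega(s,t)$. This is achievable by continuity of the control, in the spirit of Lemma \ref{lem:pi}. Passing from level $j$ to level $j+1$ removes $2^j$ midpoints, and each removal costs at most $\frac{p}{\beta_p^2((n+1)/p)!}(2^{-j}\omega)^{(n+1)/p}$. Summing gives a geometric series
\[
\frac{p\,\omega^{(n+1)/p}}{\beta_p^2((n+1)/p)!}\sum_{j\ge0}2^{j(1-(n+1)/p)},
\]
which converges since $n+1>p$. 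Its sum is at most $\frac{1}{1-2^{1-(\p+1)/p}}$, because $n+1\ge \p+1$. Multiplying by $p$ gives exactly $\beta_p$, so the total is at most $\omega^{(n+1)/p}/(\beta_p((n+1)/p)!)$. The level-$0$ term of the dyadic sequence (the single interval, whose level-$(n+1)$ component is zero) contributes nothing.

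The main obstacle is that general partitions are not dyadic, so we must ensure that the limit defining $\Sig^{n+1}$ agrees with the limit along dyadic refinements and that the constant is not lost. We expect to handle this by first establishing the bound along dyadic sequences. We then use the uniqueness of the multiplicative extension, obtained from the same estimate with a vanishing mesh, to identify the limit. A further point of care is that superadditivity may make the dyadic points only satisfy $\omega\le 2^{-j}\omega(s,t)$ rather than equality; since only upper bounds enter the estimate, this suffices.
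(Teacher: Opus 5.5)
Your proposal is correct and takes essentially the same route the paper relies on (the paper gives no proof of its own, only a citation): Lyons' inductive extension argument, with the Hara--Hino neo-classical inequality supplying the factor $p$, and nested control-dyadic refinement yielding exactly $\beta_p = p/(1-2^{1-(\p+1)/p})$. The identification step you flag is standard: by additivity at the top degree, the level-$(n+1)$ part of the product over $D_J$ differs from $\Sig^{n+1}(\bX)_{s,t}$ by $-\sum_{[u,v]\in D_J}\Sig^{n+1}(\bX)_{u,v}$, and this tends to $0$ given any a priori bound $|\Sig^{n+1}(\bX)_{u,v}|\lesssim\omega_p(u,v)^{(n+1)/p}$ (for instance the one from the original extension theorem, which has a worse constant).
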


We continue to consider the linear differential equation \eqref{eq:linear} in the rough case; the series representation \eqref{eq:AyS} continues to hold \cite[Theorem 4.5]{LCL}, and we proceed with the same notations $\Phi, \mathcal E$, etc., with the goal of deriving a bound for the error produced by the Euler scheme on a partition. The proof of \autoref{thm:euler1} does not work in the $p$-variation case (even for $p \in (1,2)$), essentially for the same reason that factorial decay of the signature is more delicate for $p > 1$: indeed, attempting to use the Chen identity
\[
\Sig^n(\bX)_{0,T} = \lim_{m \to \infty}\sum_{\substack{n_1 + \ldots + n_m = n \\ n_k \leq \p}} \bX^{n_1}_{t_0,t_1} \otimes \cdots \otimes \bX^{n_m}_{t_{m-1},t_m}
\]
it is not possible to bound with \eqref{eq:factless} since one has to distinguish between the cases of $k$ for which $n_k = 0$; on the other hand, forgetting the factor $\beta_p$ in the denominator and the constraint $n_k \leq \p$ and using the $m$-fold neo-classical inequality results in a divergent factor of $p^{m-1}$. We will instead adapt the proof of the convergence of the Euler scheme for (non-linear) $\mathrm{Lip}^\gamma$ equations \cite[Theorem 10.30]{FV10}, which circumvents this difficulty by a telescoping argument. This result could be used directly by extending the linear vector fields to $\mathrm{Lip}^\gamma$ (even supported outside of the region given by the a priori bound of the linear RDE); however, doing so would give up factorial decay in the bound, a feature of the problem that we wish to preserve. The main difficulty in adapting the proof for the $\mathrm{Lip}^\gamma$ bound is that, in the linear case, it is necessary to have an a priori bound on $\cE_{N;\pi}$. This is achieved by bounding it by proximity to the true solution, for which an a priori bound \emph{is} available; this results in a recursion that can be resolved by an application of the discrete Gr\"onwall inequality. We begin with a few preliminary lemmas. Let
\begin{equation}
\exp_p(x) \coloneqq \sum_{n = 0}^\infty \frac{x^{n/p}}{(n/p)!} .
\end{equation}
denote the Mittag-Leffler function with parameter $1/p$, evaluated at $x^{1/p}$.
\begin{lemma}
For $x \geq 0$
\[
\sum_{k = n}^\infty \frac{x^{k/p}}{(k/p)!} \leq \frac{x^{n/p}}{(n/p)!}\exp_p(x) .
\]
\end{lemma}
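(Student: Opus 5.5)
Reindex the tail by $k = n + j$ and factor out the leading term:
\[
\sum_{k = n}^\infty \frac{x^{k/p}}{(k/p)!} = \frac{x^{n/p}}{(n/p)!}\sum_{j = 0}^\infty \frac{(n/p)!}{((n+j)/p)!}\, x^{j/p}.
\]
Comparing with $\exp_p(x) = \sum_{j\ge 0} x^{j/p}/(j/p)!$ term by term, the lemma reduces to the elementary inequality
\begin{equation*}
\frac{(n/p)!\,(j/p)!}{((n+j)/p)!} \leq 1, \qquad n, j \geq 0,
\end{equation*}
i.e.\ $\Gamma(a+1)\Gamma(b+1) \leq \Gamma(a+b+1)$ for $a = n/p$, $b = j/p \geq 0$. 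Every term of both series is nonnegative since $x \geq 0$, so this termwise bound suffices.

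I would prove this Gamma inequality via the Beta function. For $a, b > 0$,
\[
\frac{\Gamma(a+1)\Gamma(b+1)}{\Gamma(a+b+1)} = \frac{ab}{a+b}\cdot\frac{\Gamma(a)\Gamma(b)}{\Gamma(a+b)} = \frac{ab}{a+b}\, B(a,b) = \frac{1}{\binom{a+b}{a}},
\]
where $\binom{a+b}{a}$ denotes the generalised binomial coefficient. So it suffices to show $\binom{a+b}{a} \geq 1$. The cases $a = 0$ or $b = 0$ are trivial, since then the ratio equals $1$.

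To show $\binom{a+b}{a} \geq 1$, use the integral representation $\Gamma(a+1)\Gamma(b+1)/\Gamma(a+b+2) = \int_0^1 t^a(1-t)^b\,\d t$. The integrand is at most $1$, so this gives
\[
\frac{\Gamma(a+1)\Gamma(b+1)}{\Gamma(a+b+1)} = (a+b+1)\int_0^1 t^a(1-t)^b\,\d t.
\]
This alone is not obviously $\leq 1$, because of the factor $(a+b+1)$. A cleaner route is log-convexity: $f(s) = \log\Gamma(s+1)$ is convex on $[0,\infty)$ with $f(0) = 0$, hence superadditive, $f(a) + f(b) \leq f(a+b)$. Superadditivity follows from convexity and $f(0)=0$ via $f(a) \leq \frac{a}{a+b}f(a+b) + \frac{b}{a+b}f(0)$ and the symmetric bound for $b$; adding the two gives the claim.

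The only step needing care is this superadditivity argument. Log-convexity of $\Gamma$ (Bohr--Mollerup, or $\psi' > 0$ from the polygamma series already quoted in the proof of Lemma~\ref{eq:minphi}) is standard, so no real obstacle is expected. Summing the termwise bound over $j$ then yields the stated inequality.
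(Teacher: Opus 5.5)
Your proof is correct and follows the paper's route. The paper reduces the lemma, after the same reindexing, to the inequality $x!\,y!\leq(x+y)!$ for real $x,y\geq 0$ and simply cites it, whereas you also prove that inequality from convexity of $s\mapsto\log\Gamma(s+1)$ together with $\log\Gamma(1)=0$. The abandoned Beta-function paragraph plays no role in the argument and can be deleted.
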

\begin{proof}
This follows from the identity $x!y! \leq (x+y)!$, see \cite[Lemma 4]{maud1}. 
\end{proof}

\begin{lemma}
For $x,y \geq 0$
\[
\exp_p(x)\exp_p(y) \leq p\exp_p(x + y) .
\]
\end{lemma}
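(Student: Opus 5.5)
The plan is to expand the product of the two series, regroup the terms by total degree, and apply the neo-classical inequality \eqref{eq:neoclassical} degree by degree. Both $\exp_p(x)$ and $\exp_p(y)$ are series of nonnegative terms, so the product is an absolutely convergent double series. Reindexing the pairs $(k,j)$ by the total degree $n = k+j$ gives
\[
\exp_p(x)\exp_p(y) = \sum_{n=0}^\infty \sum_{k=0}^n \frac{x^{k/p}\,y^{(n-k)/p}}{(k/p)!\,((n-k)/p)!} .
\]

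Next, for each fixed $n$, I multiply and divide by $(n/p)!$. Since $\binom{n/p}{k/p} = \frac{(n/p)!}{(k/p)!\,((n-k)/p)!}$, defined with the Gamma function as usual, the inner sum becomes
\[
\frac{1}{(n/p)!}\sum_{k=0}^n \binom{n/p}{k/p} x^{k/p}y^{(n-k)/p}.
\]
The neo-classical inequality \eqref{eq:neoclassical}, in the sharpened form of \cite{HH10}, bounds this by $\frac{p\,(x+y)^{n/p}}{(n/p)!}$. Summing over $n\ge 0$ then gives exactly $p\exp_p(x+y)$.

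The only point to check is that \eqref{eq:neoclassical} holds for every $n\ge 0$ and all $x,y\ge0$ with the constant $p$, in particular for $n < p$. This is the setting of \cite{HH10}: the inequality is stated for $p\ge1$ and all $n\in\mathbb N$. The case $n=0$ reads $1\le p$, which holds trivially. The boundary cases $x=0$ or $y=0$ follow by continuity, or directly with the convention $0^0=1$. With that confirmed, there is no genuine obstacle, and the only care required is the reindexing of the nonnegative double series, which is justified by Tonelli's theorem.
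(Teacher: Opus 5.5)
Your proof is correct and follows the same route as the paper: form the Cauchy product of the two series, group terms by total degree $n$, and apply the sharpened neo-classical inequality \eqref{eq:neoclassical} of \cite{HH10} to each degree. Your checks of the case $n=0$ (which reduces to $1 \leq p$), the boundary cases $x=0$ or $y=0$, and the rearrangement of nonnegative terms are sound, and the paper leaves them implicit.
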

\begin{proof}
By the neo-classical inequality \cite{HH10}
\begin{align*}
\exp_p(x) \exp_p(y) ={}& \sum_{i = 0}^\infty \sum_{j = 0}^\infty \frac{x^{i/p}}{(i/p)!} \frac{x^{j/p}}{(j/p)!} \\
={}&\sum_{k = 0}^\infty \sum_{i = 0}^k \frac{x^{i/p}}{(i/p)!} \frac{y^{(k-i)/p}}{((k-i)/p)!} \\
\leq{}& p\sum_{k = 0}^\infty \frac{(x+y)^{k/p}}{(k/p)!} \\
={}&p\exp_p(x+y) . \qedhere
\end{align*}
\end{proof}

\begin{lemma}
Let $|A| \leq r$, then
\[
|\Phi_{s,t} z - \Phi_{s,t} y| \leq \exp_p(r^p\omega_p(s,t)) |z-y|
\]
\end{lemma}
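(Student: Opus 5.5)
By linearity of $\Phi_{s,t}$ in the initial condition we have $\Phi_{s,t}z - \Phi_{s,t}y = \Phi_{s,t}(z-y)$. It therefore suffices to bound the operator norm of $\Phi_{s,t}$, i.e.\ to show
\[
|\Phi_{s,t} w| \leq \exp_p(r^p\omega_p(s,t))\,|w| \qquad \text{for all } w \in \bbR^e .
\]

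The plan is to use the series representation \eqref{eq:AyS}, which remains valid for $p$-rough paths by \cite[Theorem 4.5]{LCL}:
\[
\Phi_{s,t} w = \sum_{n \geq 0} A^{\otimes n} w\, \Sig^n(\bX)_{s,t}.
\]
We bound it term by term. The Cauchy--Schwarz estimate \eqref{eq:AySleq} gives
\[
|A^{\otimes n} w\, \Sig^n(\bX)_{s,t}| \leq r^n |w|\, |\Sig^n(\bX)_{s,t}| .
\]
The factorial decay of \autoref{thm:factdec} then gives
\[
|\Sig^n(\bX)_{s,t}| \leq \frac{\omega_p(s,t)^{n/p}}{\beta_p (n/p)!}
\]
for $n \geq 1$, while the $n = 0$ term is simply $|w|$.

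Combining these, and writing
\[
r^n \omega_p(s,t)^{n/p} = \big(r^p\omega_p(s,t)\big)^{n/p},
\]
we obtain
\[
|\Phi_{s,t}w| \leq |w|\Big(1 + \beta_p^{-1}\sum_{n\geq 1} \frac{(r^p\omega_p(s,t))^{n/p}}{(n/p)!}\Big).
\]
It remains to show that $\beta_p \geq 1$. Since $\p + 1 > p$, the exponent $1 - (\p+1)/p$ is negative, so $0 < 2^{1-(\p+1)/p} < 1$. Hence the denominator of $\beta_p$ lies in $(0,1)$, and $\beta_p \geq p \geq 1$. The bracket is therefore dominated by $\exp_p(r^p\omega_p(s,t))$, which proves the claim.

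The only point requiring care is justifying convergence and the interchange of sum and norm. This follows from absolute convergence, which the same factorial bounds provide, since $\exp_p$ is entire in $x^{1/p}$. There is no real obstacle beyond checking $\beta_p \geq 1$, which is what allows us to discard the constant $\beta_p^{-1}$.
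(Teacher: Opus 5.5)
Your proof is correct and follows the paper's argument: you use linearity in the initial condition, the series representation \eqref{eq:AyS}, the bound \eqref{eq:AySleq}, and the factorial decay of \autoref{thm:factdec}, and then drop $\beta_p \geq 1$, which is present only for $n \geq 1$. Your explicit check that $\beta_p > p \geq 1$, using $\lfloor p \rfloor + 1 > p$, fills in a detail the paper states without proof.
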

\begin{proof}
By \autoref{thm:factdec}, dropping the $\beta_p \geq 1$ in the denominator (which is not present for $n = 0$) we have
\begin{align*}
|\Phi_{s,t} z - \Phi_{s,t} y| &\leq \sum_{n = 0}^\infty |A|^n |\Sig^n_{s,t}| |z-y| \leq \sum_{n = 0}^\infty r^n \frac{\omega_p(s,t)^{n/p}}{(n/p)!} |z-y| \leq \exp_p(r^p\omega_p(s,t)) |z-y| . \qedhere
\end{align*}
\end{proof}

\begin{lemma}\label{lem:MLbound}
For $x \geq 0$ and $p \geq 1$
\[
\exp_p(x) \leq pe^x, \quad \text{and} \quad \exp_p(x) \sim pe^x \quad \text{as } x \to \infty.
\]
\end{lemma}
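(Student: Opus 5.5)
The plan is to work with the function $h_x(s) \coloneqq x^s/\Gamma(s+1)$, $s \geq 0$, so that $\exp_p(x) = \sum_{n \geq 0} h_x(n/p)$. The basic ingredient is the classical identity for the regularised lower incomplete Gamma function $P(\theta,x) = \gamma(\theta,x)/\Gamma(\theta)$:
\[
F_x(\theta) \coloneqq \sum_{k = 0}^\infty h_x(k+\theta) = e^x P(\theta, x), \qquad \theta \in (0,1].
\]
This can be checked by differentiating in $x$ (both sides satisfy $F' = F + x^{\theta-1}/\Gamma(\theta)$ with value $0$ at $x=0$). For $\theta = 0$ the sum is exactly $e^x$. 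In particular $0 \leq F_x(\theta) \leq e^x$ for every $\theta \in [0,1)$, and $F_x(\theta)/e^x \to 1$ as $x \to \infty$ for each fixed $\theta$.

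\textbf{Integer $p$.} Splitting $n = kp + j$ with $j = 0,\ldots,p-1$ gives
\[
\exp_p(x) = \sum_{j=0}^{p-1} F_x(j/p) \leq p e^x,
\]
and each summand is asymptotic to $e^x$, so $\exp_p(x) \sim p e^x$.

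\textbf{General real $p \geq 1$.} Here I would group the indices $n$ according to $k = \lfloor n/p \rfloor$, writing $n/p = k + \theta_n$. The difficulty is that the number of $n$ landing in $[k,k+1)$ is $\lfloor p \rfloor$ or $\lceil p \rceil$ rather than $p$. Done naively, this grouping only gives the bound $\lceil p \rceil e^x$.

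To obtain the sharp constant $p$ I would try two things, in this order.

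\emph{First attempt: averaging over shifts.} Since $\int_0^1 F_x(\theta)\,\d\theta = \int_0^\infty h_x(s)\,\d s$, and Ramanujan's formula gives $\int_0^\infty h_x(s)\,\d s \leq e^x$, it would suffice to compare the lattice sum $\sum_n h_x(n/p)$ with $p\int_0^\infty h_x$. I would attempt this using monotonicity of $P(\cdot,x)$ in $\theta$: $P(\theta,x)$ is decreasing in $\theta$ with $P(0^+,x)=1$, so the lattice sum is dominated by the corresponding left-endpoint Riemann sum.

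\emph{Fallback: an inequality of neo-classical type.} If the averaging argument does not close, I would instead try to derive the bound from an inequality in the spirit of \eqref{eq:neoclassical} (\cite{HH10}), exploiting that $(x,y)\mapsto\sum_k \binom{n/p}{k/p}x^{k/p}y^{(n-k)/p}$ already carries the constant $p$.

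I expect the non-integer case of the inequality, with constant exactly $p$, to be the main obstacle.

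\textbf{Asymptotics.} The asymptotic statement for general $p$ follows from the classical large-argument asymptotics of the Mittag-Leffler function,
\[
E_\alpha(z) \sim \alpha^{-1}\exp\big(z^{1/\alpha}\big) \quad \text{as } z \to +\infty, \qquad 0 < \alpha < 2.
\]
Apply this with $\alpha = 1/p$ and $z = x^{1/p}$: this gives $E_{1/p}(x^{1/p}) \sim p\, e^{x}$, since $z^{1/\alpha} = (x^{1/p})^{p} = x$. The relation $\exp_p(x) = E_{1/p}(x^{1/p})$ is immediate from the definitions. For integer $p$ this asymptotic statement is already covered by the residue decomposition above.
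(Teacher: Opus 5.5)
Your integer-$p$ argument is correct: it even gives the defect exactly, $pe^x-\exp_p(x)=e^x\sum_{j=1}^{p-1}(1-P(j/p,x))$. Your treatment of the asymptotics, via $\exp_p(x)=E_{1/p}(x^{1/p})$ and the classical large-argument behaviour of $E_\alpha$, is the same as the paper's. The gap is the inequality for non-integer $p$, which is exactly the case needed in Section 2.

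Your first attempt cannot close this gap. Since $\theta\mapsto F_x(\theta)=e^xP(\theta,x)$ is decreasing on $[0,1]$, a left-endpoint Riemann sum of it \emph{over}estimates its integral, so the monotonicity you invoke pushes the wrong way. In fact the intermediate inequality $\sum_n h_x(n/p)\le p\int_0^\infty h_x(s)\,\d s$ that you propose to prove is false. At $x=0$ the left side is $1$ and the right side is $0$. For integer $p$, your own identity gives $\exp_p(x)=\sum_{j<p}F_x(j/p)>p\int_0^1F_x(\theta)\,\d\theta=p\int_0^\infty h_x(s)\,\d s$ for every $x>0$. Routing through Ramanujan's formula therefore only rewrites the problem: you would need $\exp_p(x)-p\int_0^\infty h_x\le p\,(e^x-\int_0^\infty h_x)$, which is the original inequality. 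For integer $p$ both sides are $\sim p/\log x$ as $x\to\infty$, so no Riemann-sum comparison can separate them.

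Your fallback is the right instinct, but it is missing a tensorisation step. Divide the neo-classical inequality \eqref{eq:neoclassical} by $(n/p)!$ and sum over $n$ to get $\exp_p(x)\exp_p(y)\le p\exp_p(x+y)$; the paper proves this a few lines earlier. By induction, $\exp_p(x)^k\le p^{k-1}\exp_p(kx)$, that is,
\[
\exp_p(x)\le p^{1-1/k}\,\exp_p(kx)^{1/k}, \qquad k\in\mathbb N^+ .
\]
Now let $k\to\infty$ and use the asymptotics $\exp_p(kx)\sim pe^{kx}$ you already have; any crude bound $\exp_p(y)\le C_pe^{y}$ would also do. This yields $\exp_p(x)\le pe^x$ for every real $p\ge1$. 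The step needs Hara--Hino's sharp constant $p$; Lyons's original constant $p^2$ would only give $p^2e^x$.

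With this step your proof is complete, and it differs from the paper's. The paper instead quotes the contour-integral representation of the Mittag--Leffler function \cite[Theorem 4.22]{GKMR20}, which writes $pe^x-\exp_p(x)$ as a nonnegative multiple of an integral with nonnegative integrand. The paper's route gives an explicit formula for the defect for all $p$ at once. The tensorisation route needs only the neo-classical inequality and the exponential growth rate, so it stays within the rough-path toolkit, but it says nothing about the size of $pe^x-\exp_p(x)$.
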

\begin{proof}
This follows from contour integral formulae for the Mittag-Leffler function, see \cite[(4.7.21) Theorem 4.22]{GKMR20}, which for $\alpha = 1/p$ and $\beta = 1$ reduces to
\[
\exp_p(x) = pe^x - \frac{\sin(\pi/p)}{2\pi/p} \int_0^{+\infty} \frac{x e^{-r^p}}{r^2 - 2rx \cos(\pi/p) + x^2}\d r,
\]
and the integrand is easily checked to be non-negative for $x, r \geq 0$. The asymptotic statement can be found in \cite[Proposition 3.5]{GKMR20} (with their $p = 0$).
\end{proof}

\begin{theorem}[Euler error for linear RDEs]\label{thm:eulerp}
There exists a partition $\pi$ on $[0,T]$ with $m$ intervals (i.e.\ \autoref{lem:pi}) and $C_p(r, \omega_p(0,T); N, m)$ which converges to a constant $C_p$ (depending only on $p$) as either $N$ or $m \to \infty$, such that for $p < N + 1$
\begin{equation}\label{eq:eulerp'}
	|\Phi_{0,T} - \mathcal E_{N;\pi}| \leq C_p(r, \omega_p(0,T); N, m) \exp(2r^p \omega_p(0,T))  \frac{(r \omega_p(0,T)^{1/p})^{N+1}}{\big(\frac{N+1}{p}\big)! m^{\frac{N+1}{p} - 1} } 
\end{equation}
(The proof will provide the explicit values of $C_p(r, \omega_p(0,T); N, m)$ and $C_p$.)
\end{theorem}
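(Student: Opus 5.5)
We follow the strategy outlined before the lemmas: adapt the telescoping proof of \cite[Theorem 10.30]{FV10}, but use the a priori bound on the true solution $\Phi$ rather than on $\cE$. Throughout, take $\pi$ from \autoref{lem:pi}, write $\omega \coloneqq \omega_p(0,T)$, and drop $y_0$ by linearity. There are three ingredients.

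\textbf{Step 1 (one-step error).} On an interval $[s,t]$, use \eqref{eq:AyS}, \autoref{thm:factdec} (dropping $\beta_p \geq 1$) and the tail lemma for $\exp_p$:
\[
|\Phi_{s,t} - \cE_{N;s,t}| \leq \sum_{n > N} r^n \frac{\omega_p(s,t)^{n/p}}{(n/p)!} \leq \frac{(r\omega_p(s,t)^{1/p})^{N+1}}{(\frac{N+1}{p})!} \exp_p(r^p\omega_p(s,t)).
\]
This is the only place where the factorial in $N$ enters. The hypothesis $p < N+1$ ensures the exponent $\frac{N+1}{p} > 1$, which is needed in Step 3.

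\textbf{Step 2 (telescoping recursion).} Write
\[
\Phi_{0,T} - \cE_{N;\pi} = \sum_{k=1}^m \cE_{N;\pi[t_k,t_m]}\,(\Phi_{t_{k-1},t_k} - \cE_{N;t_{k-1},t_k})\,\Phi_{0,t_{k-1}}.
\]
Equivalently, as in \autoref{thm:euler1}, set $\eps_l \coloneqq |\Phi_{0,t_l} - \cE_{N;\pi[t_0,t_l]}|$ and derive
\[
\eps_l \leq |\Phi_{t_{l-1},t_l} - \cE_{t_{l-1},t_l}|\,|\Phi_{0,t_{l-1}}| + |\cE_{t_{l-1},t_l}|\,\eps_{l-1}.
\]
The lemmas give $|\Phi_{0,t_{l-1}}| \leq \exp_p(r^p\omega)$. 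For the one-step Euler factor, avoid an a priori bound on iterated $\cE$ by writing $|\cE_{t_{l-1},t_l}| \leq |\Phi_{t_{l-1},t_l}| + \delta_l$, where $\delta_l$ is the Step 1 bound; hence $|\cE_{t_{l-1},t_l}| \leq \exp_p(r^p\omega/m)(1 + \text{small})$. Using \autoref{lem:MLbound}, replace $\exp_p(x) \leq pe^x$, so that $|\cE_{t_{l-1},t_l}| \leq e^{a_l}$ with $a_l = r^p\omega/m + \log(p(1+c_{N,m}))$. Here $c_{N,m} \to 0$ as $N \to \infty$ or $m \to \infty$.

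\textbf{Step 3 (Grönwall and summation).} Apply \eqref{eq:discreteGronwall1}. We get $\exp(\sum_j a_j) \lesssim e^{r^p\omega}$ times the accumulated factors. Combined with $|\Phi| \leq pe^{r^p\omega}$, this produces $\exp(2r^p\omega)$. The remaining sum is
\[
\sum_{k} \omega_p(t_{k-1},t_k)^{\frac{N+1}{p}} \leq m\,(\omega/m)^{\frac{N+1}{p}} = \omega^{\frac{N+1}{p}}\, m^{1-\frac{N+1}{p}},
\]
where we use $\omega_p(t_{k-1},t_k) \leq \omega/m$ from \autoref{lem:pi}. This gives the rate in \eqref{eq:eulerp'}.

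\textbf{Main obstacle.} The delicate point is the per-step constant. A naive use of $\exp_p \leq pe^x$ at every step gives a factor $p^m$, which diverges in $m$, whereas $C_p$ must converge. To avoid this, we do not bound $|\cE_{t_{l-1},t_l}|$ via $\exp_p$. Instead, bound it by $\sum_{n\leq N} r^n \omega_p^{n/p}/(n/p)!$ and compare directly with $e^{r^p\omega/m}$, or rather with $\exp_p$ of a small argument, where $\exp_p(x) \to 1$ as $x \to 0$. Concretely:
\begin{itemize}
\item Bound the product $\prod_j |\cE_{t_{j-1},t_j}|$ through the closeness $|\cE| \leq |\Phi| + \delta$ and the semigroup structure.
\item Alternatively, compare $\prod_j(1 + \text{terms})$ with $\exp$ of the summed terms, and use the telescoping form with $\Phi$ on one side so that only one full-interval $\exp_p \leq pe^{r^p\omega}$ appears.
\end{itemize}
The explicit $C_p(r,\omega;N,m)$ then collects these corrections, with limit $C_p$ equal to a power of $p$ (e.g. $p^2$).
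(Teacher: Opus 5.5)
\textbf{There is a genuine gap, and it sits exactly at what you call the main obstacle. Your proposed remedies do not close it.}

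Your Steps 2--3 run the one-step recursion $\eps_l \leq \delta_l |\Phi_{0,t_{l-1}}| + |\cE_{t_{l-1},t_l}|\,\eps_{l-1}$ from \autoref{thm:euler1}. After Gr\"onwall, the error is therefore multiplied by $\prod_j |\cE_{t_{j-1},t_j}|$, a product of $m$ one-step norms. For $p>1$ this product cannot be bounded uniformly in $m$, and this is separate from the factor $p^m$ produced by $\exp_p(x)\leq pe^x$.

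The reason is the degree-one term $A X_{t_{j-1},t_j}$. It forces any bound on $|\cE_{t_{j-1},t_j}|$ or $|\Phi_{t_{j-1},t_j}|$ that depends only on $\omega_p(t_{j-1},t_j)$ to be at least of order $1 + r\,\omega_p(t_{j-1},t_j)^{1/p}$; test this on a straight-line increment. Consistently, $\exp_p(x) - 1 \sim x^{1/p}/(1/p)!$ as $x \to 0^+$. With $\omega_p(t_{j-1},t_j) \approx \omega/m$ you therefore get $\exp_p(r^p\omega/m)^m \approx \exp(c\,r\,\omega^{1/p} m^{1-1/p}) \to \infty$.

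This rules out each remedy you suggest:
\begin{itemize}
\item Comparing $\sum_{n \leq N} r^n \omega_j^{n/p}/(n/p)!$ with $e^{r^p\omega_j}$ fails, because the inequality is false for small $\omega_j$.
\item Using $\exp_p(x) \to 1$ as $x \to 0$ does not help, because the convergence is far too slow.
\item The semigroup property is of no use once you have taken norms of the individual factors.
\end{itemize}
This is precisely why the paper warns that the proof of \autoref{thm:euler1} does not extend even to $p \in (1,2)$.

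\textbf{The missing idea is never to split an iterated operator into one-step norms.} The paper telescopes as
$\Phi_{0,t_l} - \cE_{\pi[t_0,t_l]} = \sum_{k=1}^l \Phi_{t_k,t_l}(\Phi_{t_{k-1},t_k} - \cE_{t_{k-1},t_k})\,\cE_{\pi[t_0,t_{k-1}]}$.
This has two effects:
\begin{itemize}
\item Each local error is propagated by a single exact flow, bounded by $\exp_p(r^p\omega_p(t_k,t_l))$.
\item The \emph{multi-step} Euler operator is bounded as a whole by proximity to the true solution: $|\cE_{\pi[t_0,t_{k-1}]}| \leq |\Phi_{0,t_{k-1}}| + \eps_{k-1}$.
\end{itemize}
Each term then contains only boundedly many $\exp_p$ factors. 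These are merged using $\exp_p(x)\exp_p(y) \leq p\exp_p(x+y)$ and superadditivity of $\omega_p$. The result is $\eps_l \leq a + b\sum_{h<l}\exp_p(c(l-h))\,\eps_h$, with $c = r^p\omega/m$ and $b \propto r^{N+1}(\omega/m)^{\frac{N+1}{p}}/(\frac{N+1}{p})!$.

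Rescaling $\tilde\eps_l \coloneqq \eps_l/\exp_p(cl)$ and using the same inequality turns this into a standard discrete Gr\"onwall with factor $e^{bpm}$. Since $bm \propto m^{1-\frac{N+1}{p}}/(\frac{N+1}{p})!$, this is where $p<N+1$ is genuinely needed: it makes $C_p(r,\omega;N,m)$ converge as $m\to\infty$. Your Step 3 summation, by contrast, holds for any exponent.

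Your own telescoping identity, with $\cE_{N;\pi[t_k,t_m]}$ on the left, could be closed in the mirror-image way. You would bound $|\cE_{N;\pi[t_k,t_m]}| \leq |\Phi_{t_k,t_m}| + |\Phi_{t_k,t_m} - \cE_{N;\pi[t_k,t_m]}|$ and run a backward recursion. As written, however, you discard that identity in favour of the one-step recursion, which is not equivalent at the level of estimates.
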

The reason why it is not possible for us to state the bound with $C_p$ outright independent of the other parameters has to do with the difficulty of obtaining an independent an a priori bound for $\cE_{N;\pi}$. Modulo this rephrasing, \eqref{eq:eulerp'} has a similar form to the bound in the $1$-variation case \autoref{thm:euler1}, except for the fact that the exponential is squared, a consequence of the fact that the proof is different.
\begin{proof}[Proof of \autoref{thm:eulerp}]
We may omit the argument $y_0$ as in the proof of \autoref{thm:euler1}, and also simplify notation by omitting the argument $N$ in $\cE_{N,\pi}$ (and similar) by writing $\Sig^n_{s,t} \coloneqq \Sig^n(X)_{s,t}$ and $\omega \coloneqq \omega_p$. For $l = 1,\ldots,m$
\begin{align*}
&|\Phi_{t_0,t_l} - \mathcal E_{\pi[t_0,t_l]}| \\
\leq{} &\sum_{k = 1}^l |\Phi_{t_k, t_l}   \mathcal E_{\pi[t_0, t_k]} - \Phi_{t_{k-1}, t_l}   \mathcal E_{\pi[t_0,t_{k-1}]}| \\
={}&\sum_{k = 1}^l |\Phi_{t_k, t_l}   \mathcal E_{\pi[t_0, t_k]} - \Phi_{t_{k}, t_l}   \Phi_{t_{k-1}, t_k}   \mathcal E_{\pi[t_0,t_{k-1}]}| \\
\leq{}& \sum_{k = 1}^l \exp_p(r^p \omega(t_k,t_l)) |\mathcal E_{\pi[t_0, t_k]} - \Phi_{t_{k-1}, t_k}   \mathcal E_{\pi[t_0,t_{k-1}]}| \\
={}& \sum_{k = 1}^l \exp_p(r^p \omega(t_k,t_l)) \Big|\sum_{n = 0}^N A^{\otimes n} \mathcal E_{\pi[t_0,t_{k-1}]} \Sig^n_{t_{k-1},t_k} - \sum_{n = 0}^\infty A^{\otimes n} \mathcal E_{\pi[t_0,t_{k-1}]} \Sig^n_{t_{k-1},t_k} \Big| \\
\leq{}& \sum_{k = 1}^l \exp_p(r^p\omega(t_k,t_l)) \sum_{n = N+1}^\infty r^n|\mathcal E_{\pi[t_0, t_{k-1}]}||\Sig^n_{t_{k-1},t_k}| \\
\leq{}& \sum_{k = 1}^l \exp_p(r^p\omega(t_k,t_l)) |\mathcal E_{\pi[t_0, t_{k-1}]}| \sum_{n = N+1}^\infty r^n \frac{\omega(t_{k-1},t_k)^{n/p}}{(n/p)!} \\
\leq{}& \sum_{k = 1}^l \exp_p(r^p\omega(t_k,t_l)) |\mathcal E_{\pi[t_0, t_{k-1}]}| \frac{r^{N+1}\omega(t_{k - 1},t_k)^{\frac{N+1}{p}}}{\beta_p\big(\frac{N+1}{p}\big)!} \exp_p(r^p\omega(t_{k-1},t_k)) \\
\leq{}&\frac{p r^{N+1}}{\beta_p\big(\frac{N+1}{p}\big)!} \sum_{k = 1}^l |\mathcal E_{\pi[t_0, t_{k-1}]}| \exp_p(r^p \omega(t_{k-1},t_l)) \omega(t_{k-1},t_k)^{\frac{N+1}{p}} \\
\leq{}&\frac{p r^{N+1}}{\beta_p\big(\frac{N+1}{p}\big)!} \sum_{k = 1}^l (|\Phi_{t_0,t_{k-1}} - \mathcal E_{\pi[t_0,t_{k-1}]}| + |\Phi_{t_0,t_{k-1}}| ) \exp_p(r^p \omega(t_{k-1},t_l)) \omega(t_{k-1},t_k)^{\frac{N+1}{p}} \\
\leq{}&\frac{p r^{N+1}}{\beta_p\big(\frac{N+1}{p}\big)!} \sum_{k = 1}^l |\Phi_{t_0,t_{k-1}} - \mathcal E_{\pi[t_0,t_{k-1}]}| \exp_p(r^p \omega(t_{k-1},t_l)) \omega(t_{k-1},t_k)^{\frac{N+1}{p}} \\
&+ \frac{p^2 r^{N+1}}{\beta_p\big(\frac{N+1}{p}\big)!} \sum_{k = 1}^l \exp_p(r^p \omega(t_0,t_l)) \omega(t_{k-1},t_k)^{\frac{N+1}{p}} \\
\leq{} &\frac{p r^{N+1}}{\beta_p\big(\frac{N+1}{p}\big)!} \omega(0,T)^{\frac{N+1}{p}} m^{-\frac{N+1}{p}} \sum_{k = 1}^l |\Phi_{t_0,t_{k-1}} - \mathcal E_{\pi[t_0,t_{k-1}]}| \exp_p(r^p \tfrac{l-k+1}{m} \omega(0,T))  \\
&+ \frac{p^2 r^{N+1} }{\beta_p\big(\frac{N+1}{p}\big)!} m^{1-\frac{N+1}{p}} \exp_p(r^p \omega(0,T)) \omega(0,T)^{\frac{N+1}{p}}.
\end{align*}
Recall the following form of the discrete Gr\"onwall inequality \cite{clark87} (and $1+b \leq e^b$): for $a_l,b_l,\varepsilon_l \geq 0$
\begin{equation}\label{eq:discgron}
\varepsilon_l \leq a_l + \sum_{h = 0}^{l-1} b_h \varepsilon_h \quad \implies \quad \varepsilon_l \leq (\max_{h = 0,\ldots,l} a_h) \exp\Big(\sum_{h = 0}^{l-1} b_h \Big) . 
\end{equation}
Calling $\varepsilon_l = |\Phi_{0,t_l} - \mathcal E_{\pi[t_0,t_l]}|$ with $h = k-1$, the bound obtained is of the form
\[
\varepsilon_l \leq a + b\sum_{h = 0}^{l-1} \exp_p(c (l - h)) \varepsilon_h
\]
(with $a,b,c$ dependent on $m$ but not $l$). Performing the transformation $\widetilde \varepsilon_l = \varepsilon_l \exp_p(cl)^{-1}$, and observing that
\[
\exp_p(c(l - h)) \exp_p(ch) \leq p \exp_p(cl) \ \implies \ \exp_p(c(l - h)) \exp_p(cl)^{-1} \leq p \exp_p(ch)^{-1} 
\]
we obtain
\begin{align*}
\widetilde \varepsilon_l &\leq a \exp_p(cl)^{-1} + b\sum_{h = 0}^{l-1} \exp_p(c (l - h))\exp_p(cl)^{-1} \varepsilon_h \leq a + bp\sum_{h = 0}^{l-1} \widetilde\varepsilon_h \\
\implies \ \widetilde\varepsilon_l &\leq a e^{bpl}\\
\implies \ \varepsilon_l &\leq a \exp_p(cl) e^{bpl} \leq a \exp_p(cm) e^{bpm} .
\end{align*}
From this it follows that for $l = 0,\ldots, m$
\begin{align*}
&|\Phi_{t_0,t_l} - \mathcal E_{\pi[t_0,t_l]}|\\
\leq{}& \frac{p^2 r^{N+1}}{\beta_p} \exp_p(r^p \omega(0,T))^2 \omega(0,T)^{\frac{N+1}{p}} \exp\bigg( \frac{p^2 r^{N+1}}{\beta_p\big(\frac{N+1}{p}\big)!} \omega(0,T)^{\frac{N+1}{p}} m^{1-\frac{N+1}{p}} \bigg) \frac{m^{1-\frac{N+1}{p}}}{\big(\frac{N+1}{p}\big)!} .
\end{align*}
from which the conclusion follows after applying \autoref{lem:MLbound}.
\end{proof}

In theory, the problem for $p$-rough paths is amenable to a similar kind of treatment as the $1$-variation case, and asymptotics on the resulting values of $N^*$, $m^*$ can be derived which are qualitatively similar to those in the $1$-variation case. We find, however, that this approach loses some of its importance for three main reasons: first, the bound involves constants and we do not expect it to be asymptotically sharp as the $1$-variation bound is, second the aforementioned \eqref{eq:optpart} difficulty in finding a good choice of $\pi$, and third, the fact that the optimisation should be stated over all $p \leq N$. Indeed, the optimisation problem should be stated as something like
\begin{align*}
	\text{minimise}\quad &m\frac{d^{N+1}}{(d-1)N} \quad \text{over } N, m \in (0,+\infty) \\
\text{subject to}\quad &\inf_{p < N + 1}  \bigg[ C_p \exp(2r^p \omega_p(0,T))  \frac{(r \omega_p(0,T)^{1/p})^{N+1}}{\big(\frac{N+1}{p}\big)! m^{\frac{N+1}{p} - 1} } \bigg] \leq \eps.
\end{align*}
where $\omega_p$ is the $p$-variation of the rough path $\Sig_{\p}(\bX)$; if $\bX$ itself is a $p_0$-rough path, the infimum should be taken over $p_0 \leq p < N + 1$. The point here is that taking higher $N$ unlocks the possibility of estimating the error in terms of $\omega_p(0,T)$ with higher $p$, which provide a much better estimate even if $X$ is smooth (but has very large $1$-variation norm). Unfortunately, this means that in order to approach this problem one must not only be able to compute the rough $p$-variation for a single value of $p$, but for the whole \emph{$p$-variation curve} of $X$ (at least for integer values of $p$)
\begin{equation}
[1,+\infty) \ni p \mapsto \omega_p^{\Sig_\p(X)}(0,T),
\end{equation}
i.e.\ the $p$-variation of the degree-$\p$ Stieltjes lift of $X$, an infinite-dimensional object. A much more practical approach is instead to test various choices of $(N,m)$ by randomly sampling matrices of norm $r$ and measuring the Euler error empirically for paths of interest. We do this in \autoref{subsec:pure} below for a toy example of smooth path, and in \autoref{sec:random} below in the stochastic setting.\\

A perspective that is slightly different compared to the Euler scheme is offered by the log-ODE method (see \cite{BGLY14} for the rough path formulation), which for linear equations is very closely related to the Magnus expansion \cite{magnus}. Define
\begin{equation}\label{eq:logODE}
	\frac{\d}{\d u}\cM_{N;s,t}^A(\bX)_u = \sum_{n = 1}^N A^{\otimes n} \cM_{N;s,t}^A(\bX)_u \pi^n\log \Sig(\bX)_{s,t}, \qquad 
\end{equation}
We write $\cM_N^A(\bX)_\pi$ to be the concatenation of the paths $\cM_{N;t_{i-1},t_i}^A(\bX)$, equivalently the solution to the ODE with piecewise-defined vector field. This coincides with the RDE (given by the original $A$) driven by the log-linear $N$-rough path $\bX_{N;\pi}$ \cite{flint}
\[
(\bX_{N;\pi})_{u,v} \coloneqq \pi_N \exp \bigg( \frac{v-u}{t_i - t_{i-1}} \pi_N \log \Sig(\bX)_{t_{i-1},t_i} \bigg)
\]
for $[u,v] \subset [t_{i-1},t_i]$ and extended on arbitrary intervals by requiring the Chen identity to hold. The log-ODE method has the advantage over the Euler scheme that the approximation also solves a differential equation and is therefore more easily interpreted geometrically. On the other hand, the Euler scheme has the computational advantage of being implementable in finitely many steps without requiring calls to an ODE solver. More importantly, for the asymptotics considered in this paper, while the log-ODE method has the same rate of $\smash{m^{1 - \frac{N+1}{p}}}$ in the number of (appropriately spaced) intervals, it fails to have factorial decay in the truncation level in general, similarly to how the Euler scheme for non-linear RDEs \autoref{rem:allCDEs} (see \autoref{fig:logode}). The obstruction can be obtained by considering the RDE for $\Sig_{N+1}(\bX)_{0,T}$ itself, and the degree-$N$ log-ODE approximation to it on a single interval $[0,T]$: $\mathcal M_{N;0,T} = \pi_{N+1} \exp (\pi_N \log \Sig_{0,T})$ the error is given by 
\[
\Sig_{N+1;0,T} - \mathcal M_{N;0,T} = \pi^{N+1} \log \Sig_{0,T},
\]
whose norm cannot decay factorially for all paths satisfying the Lyons-Sidorova conjecture \cite{LS06} (the fact that the differential equation depends on $N$ does not alter this statement, since its norm grows only geometrically). Compare with the step-$N$ Euler estimate, $\cE_{N;0,T} = \Sig_{N;0,T}$, which results in the error $\Sig^{N+1}_{0,T}$ that does decay factorially. Nevertheless, the log-ODE method permits the following more elegant alternative formulation of the problem.

\begin{definition}[$(\varepsilon, r)$-close rough paths]
	Let $\bX^1$ and $\bX^2$ be rough paths defined on $[0,T]$ (possibly of different regularity/degree). We say that they are $(\varepsilon, r)$-close if for all $A$ with $|A| \leq r$ it holds that $|\Phi^A_{0,T}(\bX^1) - \Phi^A_{0,T}(\bX^2)| < \varepsilon$.
\end{definition}

If we choose to approximate the solution using the log-ODE method instead of with the Euler scheme, the problem can be restated in terms of finding $N$ and $m$ (as well as $\pi$ of size $m$ satisfying \autoref{lem:pi}) such that $\bX_{N;\pi}$ is $(\eps,r)$-close to $\bX$.

\begin{figure}[h!]
	\centering
	\includegraphics[width=0.5\textwidth]{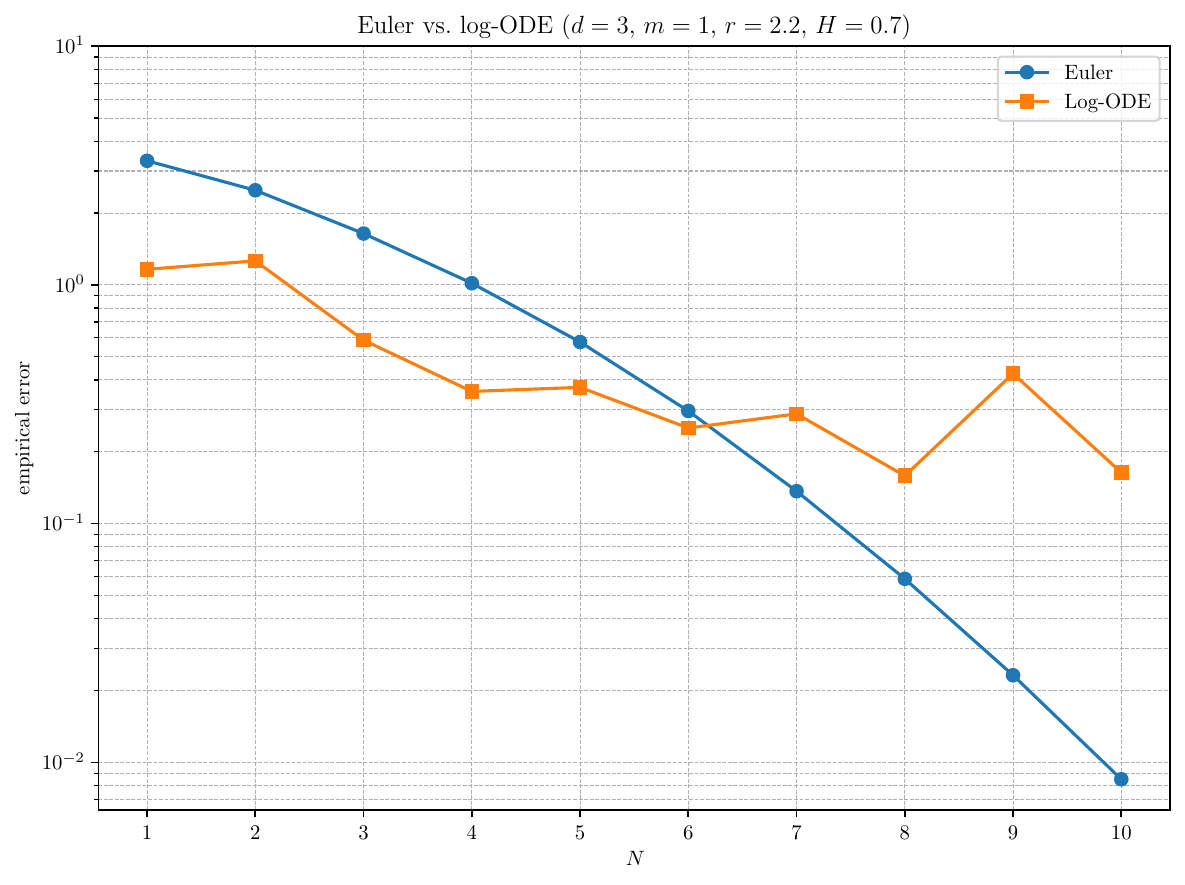}
	\caption{The one-step ($m = 1$) Euler scheme vs.\ log-ODE method. We have computed the $L^2$ norm of the error over $32$ paths sampled from fractional Brownian motion with Hurst paramter $H = 0.7$, averaged over $10$ random matrices with norm $r = 2.2$. Here and in all plots below, errors are compared against the solution computed using \texttt{Diffrax} \cite{kidger_diffrax, kidger2021on} on a very fine partition. For high enough $r = |A|$, the latter can fail to converge with $m$ fixed and $N \to \infty$, although for $r$ too large convergence of the Euler scheme in this sense can be initially slow. In fact, for low $N$, the log-ODE error appears to be lower than the Euler error, a finding which is superficially in agreement with the asymptotic efficiency result of \cite{CG95}.}
	\label{fig:logode}
\end{figure}

\subsection{Approximations to pure level-$\nu$ rough paths.}\label{subsec:pure}

In this subsection we focus on a class of \say{toy examples} of smooth paths which converge in $p$- but not $q$-rough path metric for $q < \p$. Let $\nu$ be a positive integer and $X \in C^{\var{1}}([0,1], \mathbb R^d)$ be such that $\Sig^n(X)_{0,1} = 0$ for $n = 1,\ldots,\nu - 1$, $\Sig^\nu(X)_{0,1} = \ell$ for some $0 \neq \ell \in (\mathbb R^d)^{\otimes \nu}$. Such paths have been constructed in the form of planar sinusoidal curves \cite{GK14}, see also \cite[\S 1.6]{Reiz19}. For $\nu = 2$ any closed curve suffices, for $\nu = 3$ an example is the \say{figure 8 curve} \autoref{fig:your-label}. Define, for a positive integer $\varrho$,
\begin{equation}\label{eq:approxPure}
X^{(\varrho)} \in C^{\var{1}}([0,1], \mathbb R^d),\qquad X^{(\varrho)}_\cdot \coloneqq \varrho^{-1}(X_{\varrho^\nu \cdot})^{\star \varrho^{\nu}},
\end{equation}
where $\star$ denotes concatenation (modulo translation so that endpoints coincide, for the trivial case $\nu = 1$). Notice that $\ell \in \mathfrak L^\nu(\bbR^d)$ since $\log \Sig(X)_{0,1} = \ell +$ higher-order terms, and since $\Sig(X)_{0,1} = \sum_{n \geq 0} \frac{1}{n!} (\log \Sig(X)_{0,1})^{\otimes n}$ the next term in $\Sig(X)_{0,1}$ has degree $2\nu$.

We now show that, in the limit of $\varrho \to \infty$, $\Sig_\nu(X^{(\varrho)})$ has the meaningful interpretation as a \emph{pure level-$\nu$ rough path}. Indeed, let $\bX$ denote the $\nu$-rough path $\bX_{s,t} = (t-s)\ell$. This is a special case of the pure rough paths introduced in \cite{BGS20} in which $\ell$ is a homogeneous Lie polynomial of degree $\nu$, and \cite[Proposition 2.11]{BGS20} implies
\begin{equation}\label{eq:skipdeg}
\Sig(X^{(\varrho)})_{s,t} \to \Sig(\bX)_{s,t} = \exp((t-s)\ell) \in \bigoplus_{k = 0}^\infty (\mathbb R^d)^{\otimes \nu k} .
\end{equation}
For $\nu = 1$ this is just a line segment and for $\nu = 2$ we recover the foundational example of pure area rough path \cite[Example 1.1.1]{Lyo98}. To show convergence $\Sig_{\nu}(X^{(\varrho)}) \to \bX$ in $(\nu, \nu + 1) \ni p$-rough path topology it suffices, by interpolation \cite[Lemma 8.16]{FV10} to establish uniform convergence $\Sig_{\nu}(X^{(\varrho)})_{s,t} \to \bX_{s,t}$ a uniform bound on $\| \Sig_\nu(X^{(\varrho)}) \|_{\var{p}}$ (cf.\ \cite[Exercise 2.10]{FH20}).

For $t-s \leq \varrho^{-\nu}$ and any $n \geq 1$ we have the bound
\begin{equation}\label{eq:tssmall}
\begin{split}
|\Sig^n(X^{(\varrho)})_{s,t}| &= \bigg| \int_{s < u_1 < \ldots < u_n < t} \dot X_{u_1}^{(\varrho)} \otimes \cdots \otimes \dot X_{u_n}^{(\varrho)} \d u_1 \cdots \d u_n \bigg| \\
&\leq \frac{\lVert \dot X^{(\varrho)} \rVert_{\infty}^n(t-s)^n}{n!} \\
&= \frac {\lVert \dot X \rVert_{\infty}^n}{n!} (t-s)^n \varrho^{n(\nu - 1)}.
\end{split}
\end{equation}
For $s \in [i\varrho^{-\nu}, (i+1)\varrho^{-\nu})$ and $t \in (j\varrho^{-\nu}, (j+1)\varrho^{-\nu}]$, $i,j \in \{0,\ldots,\varrho^\nu - 1\}$ and $k = j - i \geq 1$ we compute
\begin{align*}
&\Sig^n(X^{(\varrho)})_{s,t} \\
={} &\sum_{n_0 + \ldots + n_k = n} \Sig^{n_0}(X^{(\varrho)})_{s,(i+1)\varrho^{-\nu}} \Sig^{n_1}(X^{(\varrho)})_{(i+1)\varrho^{-\nu}, (i+2)\varrho^{-\nu}} \cdots \Sig^{n_{k-1}}(X^{(\varrho)})_{(j-1)\varrho^{-\nu}, j\varrho^{-\nu}} \Sig^{n_k}(X^{(\varrho)})_{j\varrho^{-\nu}, t} \\
={} &\varrho^{-n} \sum_{n_0 + \ldots + n_k = n} \Sig^{n_0}(X)_{s\varrho^\nu - i,1} \Sig^{n_1}(X)_{0,1} \cdots \Sig^{n_{k-1}}(X)_{0,1} \Sig^{n_k}(X)_{0, t\varrho^\nu - j} .
\end{align*}
If $1 \leq n < \nu$, bounding as in \eqref{eq:tssmall}
\begin{equation}\label{eq:nlessnu}
\begin{split}
|\Sig^n(X^{(\varrho)})_{s,t}| &\leq \varrho^{-n} \sum_{n_0 + n_{k+1} = n} |\Sig^{n_0}(X)_{s\varrho^\nu - i,1}| |\Sig^{n_{k+1}}(X)_{0, t\varrho^\nu - j}| \\
&\leq \varrho^{-n} \sum_{i+j = n} \frac{\|\dot X\|_\infty^i \|\dot X\|_\infty^j}{i!j!}\\
&= \frac{(2\|\dot X\|_\infty)^n}{n!} \varrho^{-n}
\end{split}
\end{equation}
At degree $\nu$ and $s,t$ in the same regime we have
\begin{equation}\label{eq:atnu}
|\Sig^\nu(X^{(\varrho)})_{s,t}| \sim \varrho^{-\nu}k \ell + \mathrm{O}(\varrho^{-\nu}) \sim (t-s)\ell + \mathrm{O}(\varrho^{-\nu})
\end{equation}
with the leading term coming from the sum $\varrho^{-\nu}\sum_{h  = 1}^k \Sig^\nu(X)_{0,1}$ and the remainder similar to the case above.

We have, following the definition \eqref{eq:pvar}
\begin{align*}
\omega_p^{\Sig_\nu(X)}(0,1) \asymp \sup_{\pi \in \Pi[0,1]} \sum_{[s,t] \in \pi} \max_{1 \leq n \leq \nu} \big\{ |\Sig^n(X^{(\varrho)})_{s,t}|^{p/n} \big\}
\end{align*}
For $p \in [\nu, \nu + 1)$ and by using \eqref{eq:tssmall} in the regime $t-s \leq \varrho^{-\nu}$ and \eqref{eq:nlessnu}, \eqref{eq:atnu} in the regime $t-s \geq \varrho^{-\nu}$
\begin{align*}
\omega_p^{\Sig_\nu(X)}(0,1) &\lesssim \sup_{\pi \in \Pi[0,1]} \sum_{[s,t] \in \pi} [\varrho^{p(\nu - 1)}(t-s)^p \mathbbm 1_{t-s \leq \varrho^{-\nu}}] \vee [(\varrho^{-p}  \vee (t-s) )\mathbbm 1_{t-s \geq \varrho^{-\nu}}] \\
&\lesssim \varrho^\nu \varrho^{p(\nu - 1)} (\varrho^{-\nu})^p \vee \varrho^{\nu - p} \vee 1 \\
&\asymp \varrho^{\nu - p} \vee 1 \\
&< \infty
\end{align*}
where in the first case we have chosen the maximising partition for which $t - s \equiv \varrho^{-\nu}$ with $\varrho^\nu$ intervals and in the second case we have similarly used that there can be at most $\varrho^\nu$ intervals with $t-s \geq \varrho^{-\nu}$. 

Conversely, let $p < \nu$. Since $\ell \neq 0$ there exists $\tau \in (0,1)$ such that 
\[
\max_{1 \leq n \leq \p} |\Sig^n(X)_{0,\tau}| \vee |\Sig^n(X)_{\tau,1}| > 0.
\]
Taking the partition $\pi$ with intervals of length alternating between $\varrho^{-\nu}\tau$ and $\varrho^{-\nu}(1 - \tau)$, we have
\[
\omega_p^{\Sig_\nu(X)}(0,1) \gtrsim \varrho^{\nu-p} \to \infty.
\]
We have therefore shown the following:
\begin{proposition}
$\Sig_\nu(X) \to \bX$ in $(\nu, \nu + 1) \ni p$-rough path metric, while for $p < \nu$, $\omega_p^{\Sig_\nu(X)}(0,1)$ diverges at rate $\varrho^{\nu - p}$.
\end{proposition}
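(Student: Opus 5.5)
Here $\Sig_\nu(X)$ in the statement is shorthand for $\Sig_\nu(X^{(\varrho)})$ with $\varrho \to \infty$. The proposition consists of the estimates just derived, so the plan is to organise them into two separate claims: convergence for $p \in (\nu,\nu+1)$, and divergence for $p<\nu$.

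For the first claim I would use the interpolation criterion \cite[Lemma 8.16]{FV10}. It says that uniform convergence of the increments together with a uniform $p'$-variation bound, for some $p' \in [\nu, p)$, gives convergence in $p$-variation rough path metric.

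\emph{Uniform convergence.} This splits by degree.
\begin{itemize}
\item For $1\le n<\nu$, the target $\bX^n_{s,t}$ vanishes. Bounds \eqref{eq:tssmall} (the case $t-s\le\varrho^{-\nu}$) and \eqref{eq:nlessnu} (the case $t-s\ge\varrho^{-\nu}$) give $|\Sig^n(X^{(\varrho)})_{s,t}|\lesssim \varrho^{-n}\vee \varrho^{-n}=\varrho^{-n}\to0$ uniformly. In the first case one uses $(t-s)^n\varrho^{n(\nu-1)}\le \varrho^{-n}$.
\item For $n=\nu$, the Chen expansion underlying \eqref{eq:atnu} writes $\Sig^\nu(X^{(\varrho)})_{s,t}-(t-s)\ell$ as a sum of three contributions: boundary terms of size $\mathrm O(\varrho^{-\nu})$, cross terms mixing degrees $<\nu$ on different pieces, and the discrepancy $|k\varrho^{-\nu}-(t-s)|\,|\ell|\le 2\varrho^{-\nu}|\ell|$. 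The cross terms vanish except at the two end pieces, because $\Sig^{n_h}(X)_{0,1}=0$ for $0<n_h<\nu$.
\end{itemize}
So we get uniform convergence at every level up to $\nu$.

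\emph{Uniform $p'$-variation bound.} I would reuse the computation giving $\omega_{p'}\lesssim \varrho^{\nu-p'}\vee 1$, which is bounded for $p'\ge\nu$. Taking $p'=\nu$ (or slightly above), the bound holds uniformly in $\varrho$, and interpolation then yields convergence for every $p\in(\nu,\nu+1)$.

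For the second claim, fix $p<\nu$ and choose $\tau$ with some nonzero $\Sig^n(X)_{0,\tau}$ or $\Sig^n(X)_{\tau,1}$, $n\le\nu$. Such a $\tau$ exists: if all these vanished for every $\tau$, continuity would force $X$ to be constant on $[0,1]$, contradicting $\ell\neq0$. Take the partition of $[0,1]$ alternating intervals of lengths $\varrho^{-\nu}\tau$ and $\varrho^{-\nu}(1-\tau)$. By scaling, each interval contributes $\vertiii{\cdot}^p\gtrsim(\varrho^{-n}c)^{p/n}=c'\varrho^{-p}$, and there are $\asymp\varrho^\nu$ intervals, so $\omega_p\gtrsim\varrho^{\nu-p}$. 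For the matching upper bound I would rerun the upper estimate with $p<\nu$:
\begin{itemize}
\item intervals of length $\le\varrho^{-\nu}$ contribute at most $\varrho^\nu\cdot\varrho^{-p}$;
\item longer intervals contribute terms of order $\varrho^{-p}\vee(t-s)^{p/\nu}$, summed over at most $\varrho^\nu$ of them.
\end{itemize}
Concavity of $x\mapsto x^{p/\nu}$ bounds the second sum by $\varrho^{\nu}\varrho^{-p}$ as well. Hence the rate is exactly $\varrho^{\nu-p}$.

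The main obstacle is making the sup over partitions rigorous when it mixes short and long intervals; the displayed estimate treated the two regimes separately. I would handle this by splitting any partition into intervals of length at most $\varrho^{-\nu}$ and longer ones, and bounding each family separately using the counting argument above.
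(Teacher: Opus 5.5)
Your proposal is correct and follows essentially the same route as the paper. The convergence part uses interpolation via \cite[Lemma 8.16]{FV10}, from the uniform bounds \eqref{eq:tssmall}, \eqref{eq:nlessnu}, \eqref{eq:atnu} together with the uniform estimate $\omega_{p'} \lesssim \varrho^{\nu-p'} \vee 1$ for $p' \geq \nu$, and the lower bound for $p < \nu$ uses the alternating $\tau$-partition. Beyond the paper, you add a concavity argument giving the matching upper bound $\omega_p \lesssim \varrho^{\nu-p}$ for $p<\nu$ (the paper only proves $\omega_p \gtrsim \varrho^{\nu-p}$), and you require a strictly smaller exponent $p' < p$ in the interpolation step, which is the correct reading of the lemma.
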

In particular, for the bound of \autoref{thm:eulerp} to continue to be meaningful with $\varrho$ large, $m$ must be chosen exponentially in (a positive power of) $\varrho$.

\begin{figure}[h!]
	\centering
	\begin{minipage}[t]{0.255\textwidth}
		\centering
		\includegraphics[width=\linewidth]{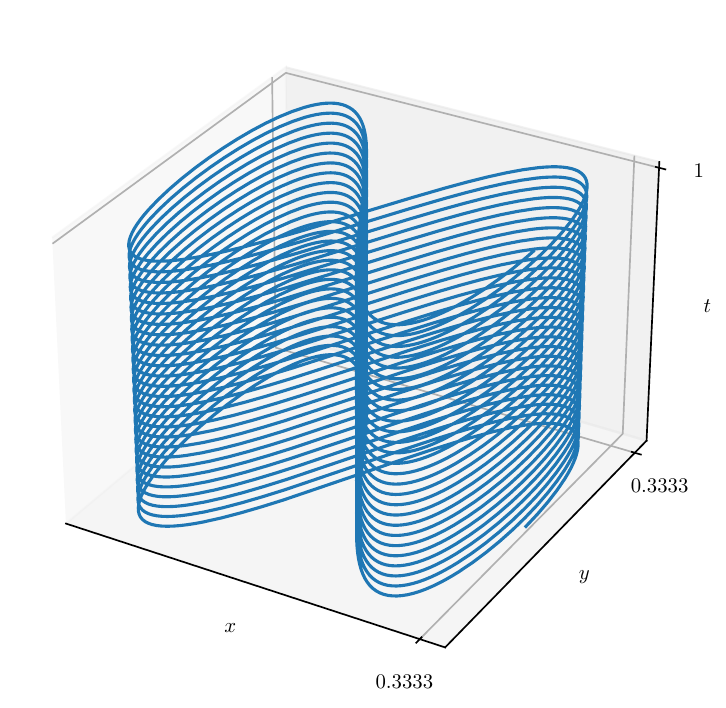}
	\end{minipage}\hspace{0.006\textwidth}%
	\begin{minipage}[t]{0.352\textwidth}
		\centering
		\includegraphics[width=\linewidth]{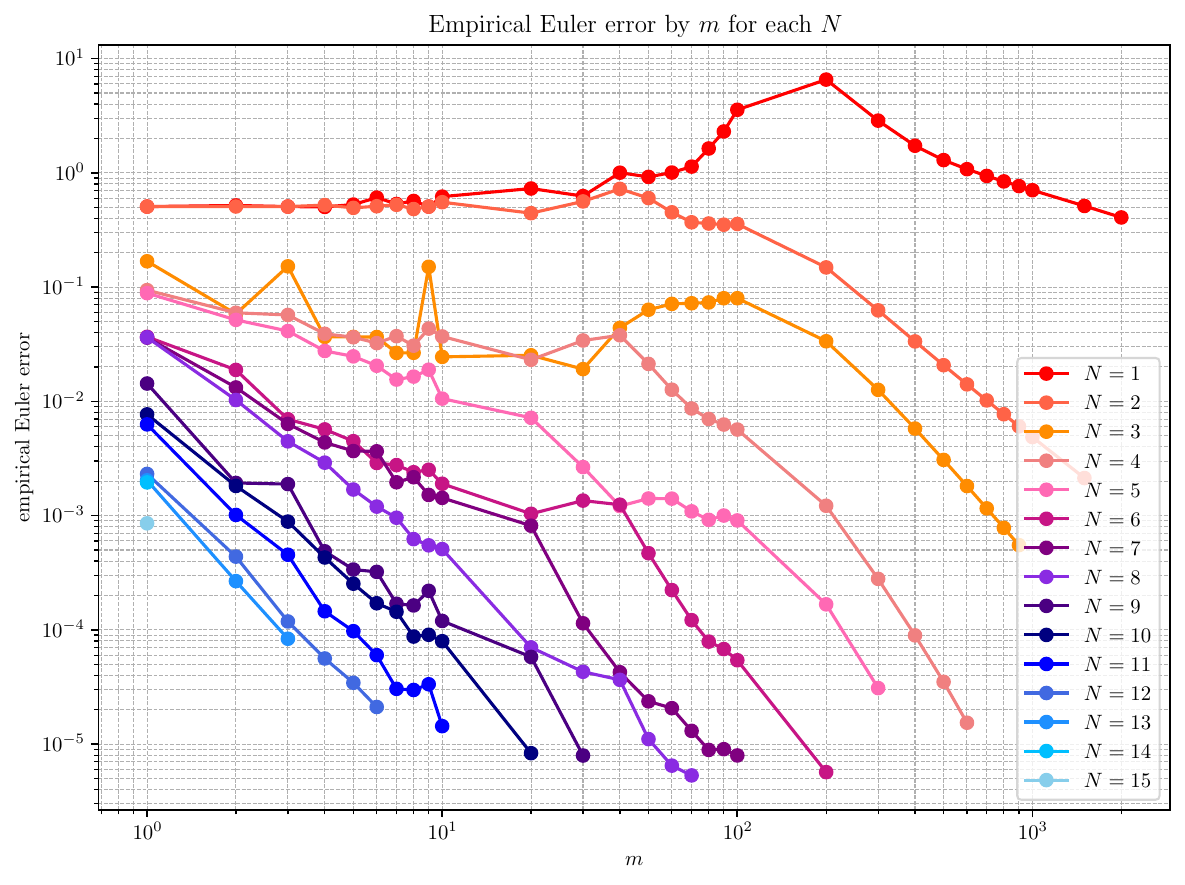}
	\end{minipage}\hspace{0.006\textwidth}%
	\begin{minipage}[t]{0.378\textwidth}
		\centering
		\includegraphics[width=\linewidth]{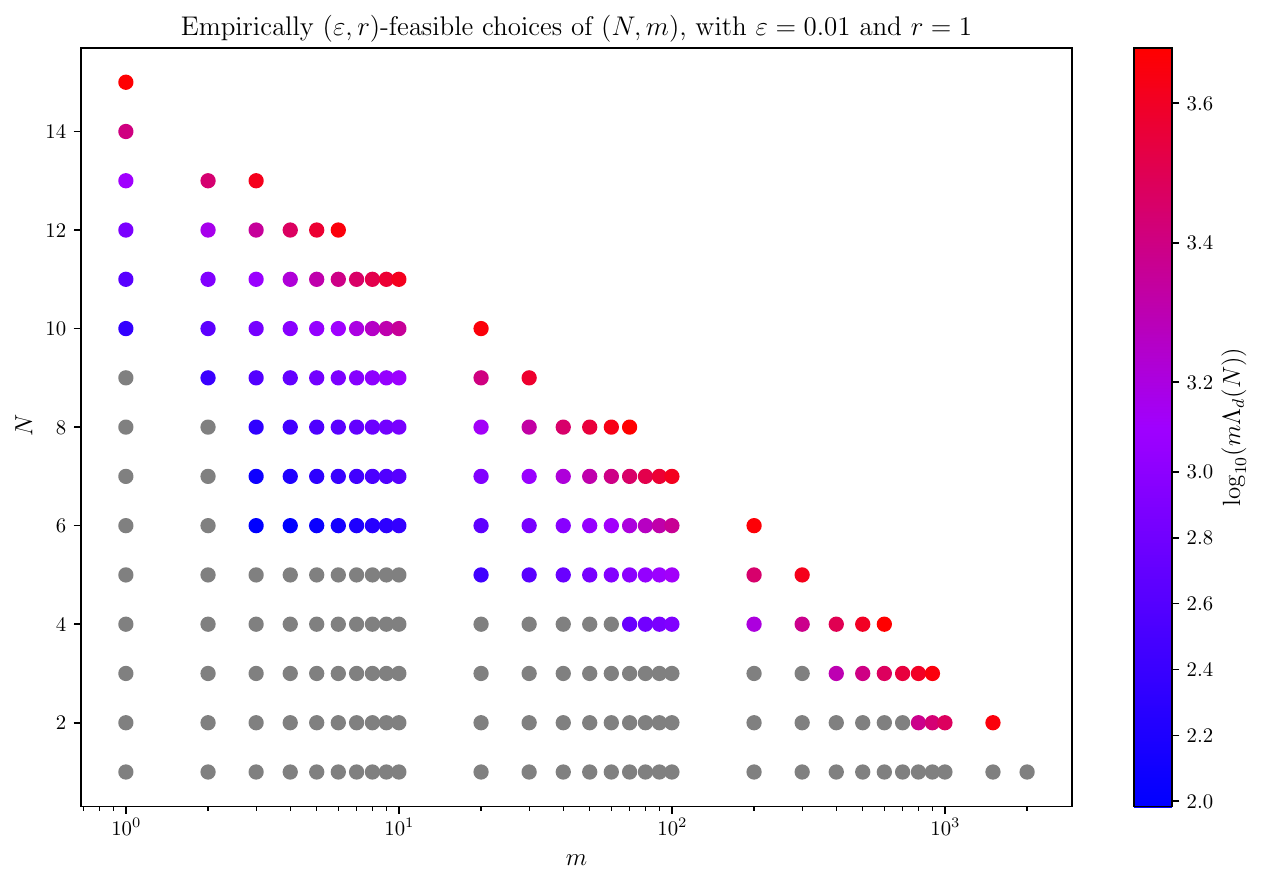}
	\end{minipage}
	\caption{We solve linear CDEs driven by the ``approximate pure figure $8$ path'' (with $\varrho = 3$), plotted on the left with the time parametrisation on the vertical axis. Averaging over $10$ matrices with norm $1$, we compute empirical Euler errors for choices of $(N,m)$ such that $m\Lambda_d(N) \leq 5000$. We see that low choices of $N$ struggle to produce convergence in the range of $m$ considered. We then plot all choices of $(N, m)$ and consider which are $(\eps, r)$-feasible, visualising for each the storage cost on a colour gradient. For this particular example, the top three choices of $(N,m)$, for $r = 1$ and $\eps \in \{10^{-2}, 10^{-3}, 10^{-4}\}$ all have $N \geq 6$. We note that, for this and later figures, taking the maximum error over equations $A$ would be more natural than averaging; this however produces plots that are harder to interpret. We speculate this is because, for small batches of equations, the worst case depends heavily on $N$ and $m$, and in the stochastic case on the chosen sample path.}
	\label{fig:your-label}
\end{figure}

\section{Paths sampled from a stochastic process}\label{sec:random}

Let $\Omega$ be a probability space and $X \colon \Omega \times [0,T] \to \bbR^d$ be a stochastic process with sample paths that are regular enough so that the signature $\Sig(X)$ can be made sense of (e.g.\ as limit in $L^2$ of the Stieltjes signatures of the piecewise linear approximations along a sequence of partitions with vanishing mesh size). This is the case classicaly for Brownian motion and also for fractional Brownian motion with Hurst parameter $H > \frac 14$. A natural reformulation of the problem \eqref{eq:problemBasic} in this context is
\begin{equation}\label{eq:problemStoch}
	\begin{split}
		\text{minimise}\quad & m\Lambda_d(N) \quad \text{over } N, m \in \mathbb N^+ \\
		\text{subject to}\quad & \|\Phi_{0,T}^A(X) - \mathcal E_N^A(X)_\pi \|_{L^2} \leq \varepsilon \quad \text{for all } A \text{ with } |A| \leq r.
	\end{split}
\end{equation}
If we also consider a drift component, noise and time should be treated inhomogeneously, and consequently the meaning $|A|$ should be adjusted, see \autoref{thm:L2error} below. The differences with the deterministic setting are that the error is computed in $L^2$, that the partition must be the same across all sample paths of $X$, and that the constraint may not take into account properties of sample paths of $X$ such as its length, since for many explicit examples of processes we expect a good $L^2$ error estimate to only involve $T$ and $d$.

Motivated by this, we will derive an $L^2$-Euler error estimate similar to the ones in previous sections for linear SDEs with drift. The convergence of these schemes is already known with sharp rate $\smash{m^{-\frac N2}}$ \cite{KP10} (which we observe is strictly faster than the $2$-rough path rate $\smash{m^{1 - \frac{N+1}{p}}}$ for $p > 2$, or even for the forbidden $p = 2$), but as in the deterministic cases focusing on the linear case enables us to additionally show factorial decay with $m$ fixed. Let $X$ be Brownian motion with augmented with drift $X^0_t = t$. Consider the linear CDE \eqref{eq:linear} and write $A_0$ for the drift term and $A_1$ for the diffusion:
\[
A = [A_0 | A_1], \qquad A_0 \in \mathcal L(\bbR^e, \bbR^e),\quad A_1 \in \mathcal L(\bbR^d, \mathcal L(\mathbb R^e, \bbR^e)),\qquad |A_0| \leq r_0, \quad |A_1| \leq r_1.
\]
Define, cf.\ \cite[(6.2) p.360]{KP10}
\begin{equation}\label{eq:AN}
\mathcal A_N \coloneqq \{(i,k) : 2i + k \leq N \text{ or } k = 0, 2i \leq N + 1 \};
\end{equation}
and note that the second condition only comes into play when $N$ is odd. We will formulate the following result in terms of linear It\^o SDEs with drift, which are equivalent to linear Stratonovich ones up to modifying the drift. Write $\Sig^{(i,k)}$ for the sum of It\^o signature components comprising $i$ drift terms and $k$ noise terms and use a similar notation for $A^{\otimes (i,k)}$.
\begin{equation}
\mathcal E_{N;{s,t}}^A \coloneqq \sum_{(i,k) \in \mathcal A_N} A^{\otimes (i,k)} \Sig^{(i,k)}_{s,t}.
\end{equation}

\begin{theorem}[$L^2$ Euler error for linear SDEs]\label{thm:L2error}
Defining 
\[
r \coloneqq \sqrt 2 r_0 \vee 2r_1^2
\]
we have
\[
\|\Phi_{0,T} - \mathcal E_{N;\pi[t_0,t_m]}\|_{L^2} \leq \exp(8Tr(d+2)+3/2)\frac{(8Tr(d+2))^{\frac{N+1}{2}}}{m^{\frac{N}{2}}\sqrt{(N+1)!}} 
\]
\end{theorem}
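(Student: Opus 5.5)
Since both $\Phi_{0,T}$ and $\mathcal E_{N;\pi}$ are linear in $y_0$, I will drop $y_0$ and work with $L^2$ operator norms. The argument mirrors \autoref{thm:euler1}: a one-step remainder estimate, then propagation through the partition. The difference is that the propagation uses orthogonality of martingale increments rather than the triangle inequality; this is what yields the rate $m^{-N/2}$ instead of $m^{-(N-1)/2}$.

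\textbf{One-step estimate.} Expand $\Phi_{s,t}=\sum_{(i,k)}A^{\otimes(i,k)}\Sig^{(i,k)}_{s,t}$ in It\^o iterated integrals, so that the one-step remainder is
\[
R_{s,t} = \sum_{(i,k)\notin\mathcal A_N} A^{\otimes(i,k)}\Sig^{(i,k)}_{s,t}.
\]
By the It\^o isometry applied iteratively (with Cauchy--Schwarz for the $\d t$ integrals), an It\^o iterated integral with $i$ time letters and $k$ Brownian letters, whose word has length $i+k$, satisfies
\[
\|\cdot\|_{L^2}^2 \le \frac{c^{i}\,h^{2i+k}}{(2i+k)!}\times(\text{combinatorial factor}), \qquad h=t-s.
\]
Summing over the $d^k$ choices of noise letters and the arrangements of the $i$ time letters, and weighting by $r_0^{2i}r_1^{2k}$, produces terms of the form $(C(d+2)rh)^{n}/n!$ with $n=2i+k$. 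This is the origin of the combination $r=\sqrt2 r_0\vee 2r_1^2$ and of the factor $d+2$ in the statement. The index set $\mathcal A_N$ guarantees that every excluded word has $n\ge N+1$. The lemma analogous to the factorial tail bound (as in \autoref{thm:euler1}) then gives
\[
\|R_{s,t}\|_{L^2}\lesssim \frac{(Kh)^{(N+1)/2}}{\sqrt{(N+1)!}}\,e^{Kh},\qquad K=8r(d+2),
\]
together with the a priori bounds $\|\Phi_{s,t}\|_{L^2},\ \|\mathcal E_{s,t}\|_{L^2}\le e^{Kh}$.

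\textbf{Key separation of the remainder.} I will split $R_{s,t}=M_{s,t}+D_{s,t}$, where $M_{s,t}$ collects the words containing at least one Brownian letter (mean zero and conditionally mean zero given $\mathcal F_s$), and $D_{s,t}$ collects the pure-drift terms $k=0$. The condition $2i\le N+1$ in \eqref{eq:AN} is designed so that every excluded pure-drift word has $2i\ge N+2$. Hence
\[
\|D_{s,t}\|\lesssim \frac{(Kh)^{(N+2)/2}}{\sqrt{(N+2)!}},
\]
one half-order better than the martingale part.

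\textbf{Propagation.} Write the error telescopically,
\[
\Phi_{0,T}-\mathcal E_\pi=\sum_k \Phi_{t_k,T}\,R_{t_{k-1},t_k}\,\mathcal E_{\pi[t_0,t_{k-1}]},
\]
or alternatively use the recursion $e_l=\mathcal E_{t_{l-1},t_l}e_{l-1}+R_{t_{l-1},t_l}\Phi_{0,t_{l-1}}$, which keeps the a priori bound on $\Phi$. I will take $\mathbb E|e_l|^2$ and use independence of Brownian increments on $[t_{l-1},t_l]$ from $\mathcal F_{t_{l-1}}$. The cross term between $\mathcal E e_{l-1}$ and $M\Phi_{0,t_{l-1}}$ then reduces to an expectation of $\mathbb E[\mathcal E_{t_{l-1},t_l}^{\top}M_{t_{l-1},t_l}]$, which is $O(h^{(N+2)/2})$-small rather than zero, since $\mathcal E$ contains Brownian terms correlated with $M$. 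I will bound this cross term, and the $D$ contributions, by Cauchy--Schwarz and Young: $2ab\le a^2/h+hb^2$. This gives a recursion of the form
\[
\varepsilon_l^2\le (1+Ch)e^{2Kh}\varepsilon_{l-1}^2+C'\frac{(Kh)^{N+1}}{(N+1)!}e^{2Kt_l}.
\]
The discrete Gr\"onwall inequality \eqref{eq:discreteGronwall1} then yields
\[
\varepsilon_m^2\lesssim m\,\frac{(KT/m)^{N+1}}{(N+1)!}\,e^{2KT+3},
\]
and taking square roots gives the stated bound, including the $e^{3/2}$. I expect the main obstacle to be this orthogonality and cross-term step: the correlation of $M$ with the Brownian part of $\mathcal E$ must be handled so that the sum over $k$ of squared one-step errors contributes only $m\cdot m^{-(N+1)}$ (so that it is not lost to the triangle inequality), while the constants are tracked carefully enough to land exactly on $8Tr(d+2)$.
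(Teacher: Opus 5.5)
Your proposal follows the paper's proof essentially step for step: It\^o-isometry/chaos estimates for the one-step mean-square remainder (of order $\frac{(Kh)^{N+1}}{(N+1)!}$) and the a priori bounds; the recursion $e_l=\mathcal E_{t_{l-1},t_l}e_{l-1}+(\Phi_{t_{l-1},t_l}-\mathcal E_{t_{l-1},t_l})\Phi_{0,t_{l-1}}$ squared in $L^2$ using independence of increments; the cross term $\mathbb E[(\Phi_{t_{l-1},t_l}-\mathcal E_{t_{l-1},t_l})^{\top}\mathcal E_{t_{l-1},t_l}]$ of order $h^{(N+2)/2}$ (your $M/D$ split is the paper's even/odd index count, with the second condition in $\mathcal A_N$ handling the pure-drift words); weighted Young with weight of order $1/m$; and the discrete Gr\"onwall inequality. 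The one point your sketch leaves implicit is that single-word It\^o isometry does not control correlations between distinct words in the same Wiener chaos (e.g.\ $\mathbb E[W_h\int_0^h u\,\d W_u]=h^2/2\neq 0$). The paper instead uses the joint bound $|\mathbb E\,\Sig^{(i,k)}_{s,t}\otimes\Sig^{(j,k)}_{s,t}|\le\frac{(4(t-s))^{i+j+k}}{(i+j+k)!}d^k$ for all $i,j$, which is what yields exactly the constant $8Tr(d+2)$; a triangle-inequality treatment gives the same form but with a worse constant.
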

\begin{proof}

By orthogonality of distinct Wiener chaoses $|\mathbb E\Sig^{(i,k)}_{s,t} \otimes \Sig^{(j,l)}_{s,t}| = 0$ if $k \neq l$ and \cite[4.14]{orth} (and using the notation therein for the inner product $(\,\cdot\,,\,\cdot\,)_{\widehat \shuffle}$ on the quasi-shuffle algebra)
\begin{align*}
&|\mathbb E\Sig^{(i,k)}_{s,t}\otimes\Sig^{(j,k)}_{s,t}| \\
\leq{} &\sum_{\substack{i_0 + \ldots + i_k = i \\ j_0 + \ldots + j_k = j}} \sum_{\alpha_r,\beta_r = 1}^d (\texttt{0}^{i_0}\alpha_1 \texttt{0}^{i_1} \ldots \texttt{0}^{i_{k-1}}\alpha_k \texttt{0}^{i_k}, \texttt{0}^{j_0}\beta_1 \texttt{0}^{j_1} \ldots \texttt{0}^{j_{k-1}}\beta_k \texttt{0}^{j_k})_{\widehat \shuffle} \\
={} &\sum_{\substack{i_0 + \ldots + i_k = i \\ j_0 + \ldots + j_k = j}} \sum_{\alpha_r = 1}^d (\texttt{0}^{i_0}\alpha_1 \texttt{0}^{i_1} \ldots \texttt{0}^{i_{k-1}}\alpha_k \texttt{0}^{i_k}, \texttt{0}^{j_0}\alpha_1 \texttt{0}^{j_1} \ldots \texttt{0}^{j_{k-1}}\alpha_k \texttt{0}^{j_k})_{\widehat \shuffle} \\
={} & \frac{(t-s)^{i+j+k}}{(i+j+k)!} d^k \sum_{\substack{i_0 + \ldots + i_k = i \\ j_0 + \ldots + j_k = j}} \prod_{r = 0}^k \binom{i_r + j_r}{i_r} \\
\leq{} & \frac{(t-s)^{i+j+k}}{(i+j+k)!} d^k \sum_{\substack{i_0 + \ldots + i_k = i \\ j_0 + \ldots + j_k = j}} \prod_{r = 0}^k 2^{i_r + j_r} \\
\leq{} & \frac{(t-s)^{i+j+k}}{(i+j+k)!} d^k 2^{i + j} \big|\{(i_0, \ldots, i_k) \in \mathbb N^{k+1} : i_0 + \ldots +i_k = i\}\big|  \big|\{(j_0, \ldots, j_k) \in \mathbb N^{k+1} : j_0 + \ldots +j_k = j\}\big| \\
={} & \frac{(t-s)^{i+j+k}}{(i+j+k)!} d^k 2^{i + j} \binom{i+k}{i}\binom{j+k}{j} \\
\leq{} & \frac{(4(t-s))^{i+j+k}}{(i+j+k)!} d^k
\end{align*}
where we have used stars and bars. We have
\begin{align*}
|A^{\otimes(i,k)}|^2 = \sum_{\text{$i$ \ \texttt{0}s}} |A_{\epsilon_1}|^2 \cdots |A_{\epsilon_{i+k}}|^2 \leq \binom{i+k}{i} r_0^{2i} r_1^{2k} \leq 2^{i+k}r_0^{2i} r_1^{2k}
\end{align*}	
We now estimate various terms that will appear in the recursion. For the following two estimates the extra condition for the odd case in \eqref{eq:AN} is not needed.
\begin{align*}
	\mathbb E |\Phi_{s,t}|^2 &= \mathbb E \Big| \sum_{i,k \geq 0} A^{\otimes (i,k)}     \Sig^{(i,k)}_{s,t} \Big|^2 \\
	&= \mathbb E \sum_{i,j,k,l \geq 0}  (A^{\otimes(i,k)}    \Sig^{(i,k)}_{s,t}) \cdot (A^{\otimes(j,l)}    \Sig^{(j,l)}_{s,t}) \\
	&\leq  \sum_{i,j,k \geq 0} |A^{\otimes (i,k)}| |A^{\otimes (j,k)}| |\mathbb E \Sig^{(i,k)}_{s,t} \otimes \Sig^{(j,k)}_{s,t}| \\
	&\leq  \sum_{i,j,k \geq 0} 2^{(i+j)/2 + k} r_0^{i+j} r_1^{2k}\frac{(4(t-s))^{i+j+k}}{(i+j+k)!} d^k \\
	&\leq  \sum_{i,j,k \geq 0} \frac{(4(t-s)r)^{i+j+k}}{(i+j+k)!} d^k \\
	&\leq  \sum_{i,j,k \geq 0} \frac{(4(t-s)r)^{i+j+k}}{i!j!k!} d^k \\
	&=   e^{4(t-s)r}e^{4(t-s)r}e^{4(t-s)dr} \\
	&=   \exp(4(t-s)r(d+2))
\end{align*}
The one step Euler error estimate is argued similarly
\begin{align*}
	\mathbb E|\Phi_{s,t} -\cE_{s,t}|^2 &\leq  \sum_{\substack{2i + k \geq N + 1 \\ 2j + k \geq N + 1}} \frac{(4(t-s)r)^{i+j+k}}{(i+j+k)!} d^k \\
	&\leq  \sum_{i+j+k \geq N+1} \frac{(4(t-s)r)^{i+j+k}}{i!j!k!} d^k \\
	&=  \sum_{n = N+1}^\infty \frac{(4(t-s)r(d+2))^n}{n!} \\
	&\leq  \frac{(4(t-s)r(d+2))^{N+1}}{(N+1)!}\exp(4(t-s)r(d+2)).
\end{align*}
We also need to consider the cross term
\begin{align*}
	|\mathbb E[(\Phi_{s, t} - \mathcal E_{s,t}) \cdot \mathcal E_{s,t}]|
	&\leq  \sum_{\substack{(i,k) \in \mathcal A_N^\complement \\ (j,k) \in \mathcal A_N}} |A^{\otimes (i,k)}||A^{\otimes(j,k)}||\mathbb E \Sig^{(i,k)}_{s,t} \otimes \Sig^{(j,k)}_{s,t}| \\
	&\leq  \sum_{\substack{(i,k) \in \mathcal A_N^\complement \\ j \geq 0}} \frac{(4(t-s)r)^{i+j+k}}{i!j!k!} d^k \\
	&\leq \begin{dcases}
		\frac{(4(t-s)r(d+2))^{\frac N2+1}}{\big(\frac N2+1\big)!}\exp(4(t-s)r(d+2)) &N \text{ even} \\
		\frac{(4(t-s)r(d+2))^{\frac{N+1}2+1}}{\big(\frac{N+1}2+1\big)!}\exp(4(t-s)r(d+2)) &N \text{ odd.}
	\end{dcases}
\end{align*}
Here we have used that if $N$ is even, disregarding the second condition in \eqref{eq:AN}, $2(i+k) \geq 2i + k \geq N+1$ implies $i+k \geq \frac N2 + 1$. If $N$ is odd, instead, the same implication only gives $i + k \geq \frac{N+1}2$, with equality only possible if $k = 0$ since $i = (2i+k) - (i+k) \geq N+1 - \frac{N+1}{2} = \frac{N+1}{2}$, precisely the case covered by the extra condition.

In the following recursion we use independence of Brownian increments. Recall the weighted Young inequality: for $a,b \geq 0$ and $\eps > 0$ (we will use it with $\eps = m^{-1}$)
\[
2ab \leq \frac{a^2}{\eps} + \eps b^2.
\]
and the simple form of the discrete Grönwall inequality \eqref{eq:discreteGronwall1}.
\begin{align*}
	&\|\Phi_{t_0,t_k} - \mathcal E_{\pi[t_0,t_k]}\|_{L^2}^2 \\
	={}& \|(\Phi_{t_{k-1}, t_k} - \cE_{t_{k-1}, t_k})\Phi_{t_0,t_{k-1}}\|_{L^2}^2 + \|\mathcal E_{t_{k-1},t_k}(\Phi_{t_0,t_{k-1}} - \mathcal E_{\pi[t_0,t_{k-1}]})\|_{L^2}^2 \\
	&+ 2 \mathbb E\big[\Phi^\intercal_{t_0,t_{k-1}} \mathbb E[(\Phi_{t_{k-1}, t_k} - \mathcal E_{t_{k-1},t_k})^\intercal \mathcal E_{t_{k-1},t_k}] (\Phi_{t_0,t_{k-1}} - \mathcal E_{\pi[t_0,t_{k-1}]}) \big] \\
	\leq{} &\mathbb E |\Phi_{t_{k-1}, t_k} - \cE_{t_{k-1}, t_k}|^2 \mathbb E |\Phi_{t_0,t_{k-1}}|^2 + \mathbb E|\mathcal E_{t_{k-1},t_k}|^2 \|\Phi_{t_0,t_{k-1}} - \mathcal E_{\pi[t_0,t_{k-1}]}\|_{L^2}^2 \\
	&+ 2 \|\Phi_{t_0,t_{k-1}} \|_{L^2} \|\Phi_{t_0,t_{k-1}} - \mathcal E_{\pi[t_0,t_{k-1}]}\|_{L^2}  | \mathbb E [(\Phi_{t_{k-1}, t_k} - \mathcal E_{t_{k-1},t_k}) \cdot \mathcal E_{t_{k-1},t_k}]| \\
	\leq{} &\exp(4t_k r(d+2))\frac{(4 T r(d+2))^{N+1}}{m^{N+1}(N+1)!} + \exp(4T r(d+2)/m) \|\Phi_{t_0,t_{k-1}} - \mathcal E_{\pi[t_0,t_{k-1}]}\|_{L^2}^2 \\
	&+2\exp(4t_k r(d+2))
	\left\{
	\begin{array}{ll}
		\displaystyle
		\frac{(4Tr(d+2))^{\frac N2+1}}
		{m^{\frac N2+1}\big(\frac N2+1\big)!},
		& N \text{ even}\\[1.2em]
		\displaystyle
		\frac{(4Tr(d+2))^{\frac{N+3}{2}}}
		{m^{\frac{N+3}{2}}\big(\frac{N+3}{2}\big)!},
		& N \text{ odd}
	\end{array}
	\right\}
	\|\Phi_{t_0,t_{k-1}} - \mathcal E_{\pi[t_0,t_{k-1}]}\|_{L^2} \\
	\leq{}&
	\exp(4t_k r(d+2))
	\frac{(4 T r(d+2))^{N+1}}{m^{N+1}(N+1)!} + \left(\exp(4T r(d+2)/m)+\frac1m\right)
	\|\Phi_{t_0,t_{k-1}}-\mathcal E_{\pi[t_0,t_{k-1}]}\|_{L^2}^2
	\\
	&+\exp(8t_k r(d+2))
	\left\{
	\begin{array}{ll}
		\displaystyle
		\frac{(4Tr(d+2))^{N+2}}
		{m^{N+1}\big(\frac N2+1\big)!^2},
		& N \text{ even}\\[1.2em]
		\displaystyle
		\frac{(4Tr(d+2))^{N+3}}
		{m^{N+2}\big(\frac{N+3}{2}\big)!^2},
		& N \text{ odd}
	\end{array}
	\right\}\\
	\leq{}&
	\left(\exp(4T r(d+2)/m)+\frac1m\right)
	\|\Phi_{t_0,t_{k-1}}-\mathcal E_{\pi[t_0,t_{k-1}]}\|_{L^2}^2\\
	&+
	\left[
	\exp(4t_k r(d+2))
	+
	\exp(8t_k r(d+2))
	\left\{
	\begin{array}{ll}
		\displaystyle
		4Tr(d+2), & N \text{ even}\\[0.8em]
		\displaystyle
		(4Tr(d+2))^2, & N \text{ odd}
	\end{array}
	\right\}
	\right]
	\frac{(8Tr(d+2))^{N+1}}{m^{N+1}(N+1)!}\\
	\leq{}&
	\exp\!\left(\frac{4Tr(d+2)+1}{m}\right)
	\|\Phi_{t_0,t_{k-1}}-\mathcal E_{\pi[t_0,t_{k-1}]}\|_{L^2}^2\\
	&+
	\exp(8t_k r(d+2))
	\bigl(1+4Tr(d+2)+(4Tr(d+2))^2\bigr)
	\frac{(8Tr(d+2))^{N+1}}{m^{N+1}(N+1)!}\\
	\leq{}&
	\sum_{i=1}^{k}
	\exp(8t_i r(d+2))
	\bigl(1+4Tr(d+2)+(4Tr(d+2))^2\bigr)
	\frac{(8Tr(d+2))^{N+1}}{m^{N+1}(N+1)!}
	\exp\!\left(\frac{(k-i)(4Tr(d+2)+1)}{m}\right)\\
	\leq{}&
	k\exp(16Tr(d+2)+3)
	\frac{(8Tr(d+2))^{N+1}}{m^{N+1}(N+1)!}.
\end{align*}
Note that if we had not required the second condition in \eqref{eq:AN}, the odd case would only have yielded a mean square rate of $m^{-(\frac{N+1}{2} \cdot 2 - 1 - 1)} = m^{-(N-1)}$ (which in particular fails to converge in the Euler-Maruyama case $N = 1$). We have used the bounds $\frac{(N+1)!}{(\frac N2 + 1)!^2} \leq 2^{N+1}$ for $N$ even and $\frac{(N+1)!}{(\frac{N+3}{2})!^2} \leq 2^{N+1}$ for $N$ odd, as well as the elementary bounds $e^{a/m} + 1/m \leq e^{a/m}(1+1/m) \leq e^{(a+1)/m}$ and $1+x+x^2 \leq e^{x+2}$. Taking $k = m$ finishes the proof.
\end{proof}

We briefly study the analogue of the relaxed constrained optimisation problem \eqref{eq:relaxed} in the Brownian/$L^2$ setting. We can replace the error estimate in the constraint with the bound from \autoref{thm:L2error}
\[
\| \Phi_{0,T} - \cE_{N;\pi} \|_{L^2} \leq B e^R \frac{R^{\frac{N+1}{2}}}{m^{N/2} \sqrt{(N+1)!}},\qquad R = R(r,d,T) \coloneqq CTr(d+2),
\]
with universal constants $B, C$. For the purposes of the size of the log-signature, we can consider $d+1$ as the dimension of the path, even though this slightly overcounts the dimension due to the parabolic scaling (also note that the second condition in \eqref{eq:AN} does not entail the saving of extra signature terms, since it only affects the universal terms $\int_{\Delta^k[s,t]} \d u_1 \cdots \d u_k = \frac{(t-s)^k}{k!}$). Reasoning analogously to the $1$-variation case, we obtain
\begin{proposition}
\begin{align*}
	N^* &\sim \sqrt{\frac{\lambda(\eps,R)}{\log(d+1)}}, \qquad m^* \sim R \sqrt{\frac{\log (d+1)}{\lambda(\eps,R)}} \exp\big( \sqrt{\lambda(\eps,R) \log(d+1)} \big) \\
	\text{with } \lambda(\eps, R) &= \log \frac{B^2Re^{2R}}{\eps^2}
\end{align*}
as $r \to \infty$ or $\eps \to 0^+$ with the other parameters fixed.
\end{proposition}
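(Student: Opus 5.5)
The plan is to reduce the problem exactly to the relaxed $1$-variation problem \eqref{eq:relaxed}, after a change of parameters, and then reuse \autoref{eq:minphi} and the proof of \autoref{thm:asympN*m*}. Squaring the constraint $Be^R R^{\frac{N+1}{2}} m^{-N/2}(N+1)!^{-1/2} = \eps$ gives the equivalent condition
\[
\frac{R^{N+1}e^R}{(N+1)!\,m^N} = \eps' , \qquad \eps' \coloneqq \frac{\eps^2}{B^2 e^{R}} .
\]
This is precisely $g_{R}(N,m) = \eps'$, i.e.\ the constraint of \eqref{eq:relaxed} with $r$ replaced by $R$ and $\eps$ replaced by $\eps'$. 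The objective is $f_{d+1}(N,m)$, with $d+1$ in place of $d$ as explained just before the statement. As in \eqref{eq:relaxed}, the constraint is active because it is strictly decreasing in $m$. Finally, the parameter \eqref{eq:lambda} becomes
\[
\lambda(\eps',R) = \log\frac{Re^R}{\eps'} = \log\frac{B^2Re^{2R}}{\eps^2},
\]
which is exactly the $\lambda(\eps,R)$ in the statement.

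Next I would check the hypothesis of \autoref{eq:minphi}, namely $\eps' \le Re^R$, equivalently $\lambda \ge 0$. It holds in both asymptotic regimes. If $\eps\to0^+$ then $\eps'\to0$. If $r\to\infty$ then $R = CTr(d+2)\to\infty$, so $\eps'\to 0$ while $Re^R \to \infty$. Hence \autoref{eq:minphi} gives a unique optimiser $(N^*,m^*)$ for all parameters close enough to the limit.

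The one point needing care, and the main obstacle, is that \autoref{thm:asympN*m*} is stated for $r\to\infty$ or $\eps\to0^+$ with the other fixed, whereas here $\eps'$ depends on $R$ and so moves when $r$ does. I would handle this by observing that the proof of \autoref{thm:asympN*m*} only uses the following two facts.
\begin{itemize}
\item $N^*$ is the root of $\alpha(N,\lambda)=0$ in \eqref{eq:alphaNlambda}. This depends on the parameters only through $\lambda$ (and $d$), so
\[
N^*(\lambda)\sim\sqrt{\lambda/\log d}, \qquad N^* = \sqrt{\lambda/\log d}+1/\log d+\mathrm o(1)
\]
whenever $\lambda\to\infty$, irrespective of how $\lambda\to\infty$.
\item $\log m^* = \log r + \sqrt{\lambda\log d} - \tfrac12\log\lambda + \tfrac12\log\log d + \mathrm o(1)$, where the prefactor $r$ appears exactly and the remainder is again a function of $\lambda$ alone.
\end{itemize}
So I would restate the conclusion of that proof as: $N^*\sim\sqrt{\lambda/\log d}$ and $m^*\sim r\sqrt{\log d/\lambda}\,\exp(\sqrt{\lambda\log d})$ as $\lambda\to\infty$, uniformly in the way the limit is taken. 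Substituting $r\mapsto R$ and $d\mapsto d+1$ then gives the claimed formulas.

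It then only remains to verify that $\lambda(\eps,R) = \log(B^2R/\eps^2) + 2R \to\infty$ in both regimes. As $\eps\to0^+$ with $r$ fixed, the term $-2\log\eps$ diverges. As $r\to\infty$ with $\eps$ fixed, $R\to\infty$ linearly in $r$, so $\lambda \sim 2R$ diverges.
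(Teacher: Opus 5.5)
Your proposal is correct and follows the paper's route. The paper's only justification is ``reasoning analogously to the $1$-variation case'', and you carry that analogy out exactly: you substitute $r \mapsto R$, $d \mapsto d+1$ and $\varepsilon \mapsto \varepsilon^2/(B^2 e^R)$ into the relaxed problem, and you observe that in the $1$-variation analysis $N^*$ and $m^*/r$ depend on the parameters only through $\lambda$, so the fact that $\varepsilon'$ moves with $r$ does no harm.
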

These asymptotics are structurally similar to those derived for the bounded variation case, the main differences lying the squared exponential and $\eps$ in the definition of $\lambda$, as well as the dependence of $R$ on $d$, a consequence of the fact that Brownian motion depends on the ambient dimension.

\begin{remark}
It should be pointed out that computing the signature of Brownian motion from the time series at a reasonable accuracy is challenging, since the $m^{-\frac 12}$ rate predicted by the Euler scheme is the best possible \cite{CC80}. However, since \autoref{thm:L2error} largely relies on martingale techniques, we believe it should be possible to prove a similar bound for the piecewise linear approximations of Brownian motion, which are always what is used in applications (e.g.\ below).
\end{remark}

We conclude with an empirical study of $d$-dimensional fractional brownian motion with Hurst parameter $H$ ($H$-fBm), a self-similar Gaussian process with stationary increments that has received a lot of attention in the literature, see \cite{CQ02} for the rough path lift of this process with $d = 4$ and $H > \frac 14$. We sample $100$ paths i.i.d.\ from this measure with $H = 0.3, 0.5, 0.7, 0.9$, and $10$ random matrices of norm $r = 1$. We use $10^4$ grid points on the interval $[0,1]$ for sampling the noise, and we further discretise $10$-fold for computing the signatures, which guarantees we are computing the Stratonovich signatures. When applying the Euler scheme for $N = 1,2$, we additionally add the remaining terms in the level-$3$ Taylor expansion
\[
\sum_{j = N+1}^3 \frac{1}{j!}A^{\otimes j}Y_{t_{k-1}}X_{t_{k-1},t_k}^{\otimes j}:
\]
for Brownian motion adding the term for $j = 2$ is equivalent to converting to It\^o form and applying the ordinary Euler scheme, and for $\frac 14 < H < \frac 12$ is necessary for convergence (while for $H > \frac 12$ is still needed for the optimal rate) \cite{BFRS16, HLN16}; note that this does not require any extra signature terms and therefore does not impact storage considerations. Our numerical results show that, for several \say{typical} choices of the parameters, taking high values of $N$, often higher than the intrinsic roughness $\lfloor H^{-1} \rfloor$ which is qualitatively necessary for rough path analysis, dramatically improves storage cost.

\begin{figure}[h!]
	\centering
	\setlength{\tabcolsep}{1pt}
	\renewcommand{\arraystretch}{0}
	
	\begin{tabular}{@{}ccc@{}}
		\begin{minipage}{0.325\textwidth}
			\centering
			\includegraphics[width=\linewidth]{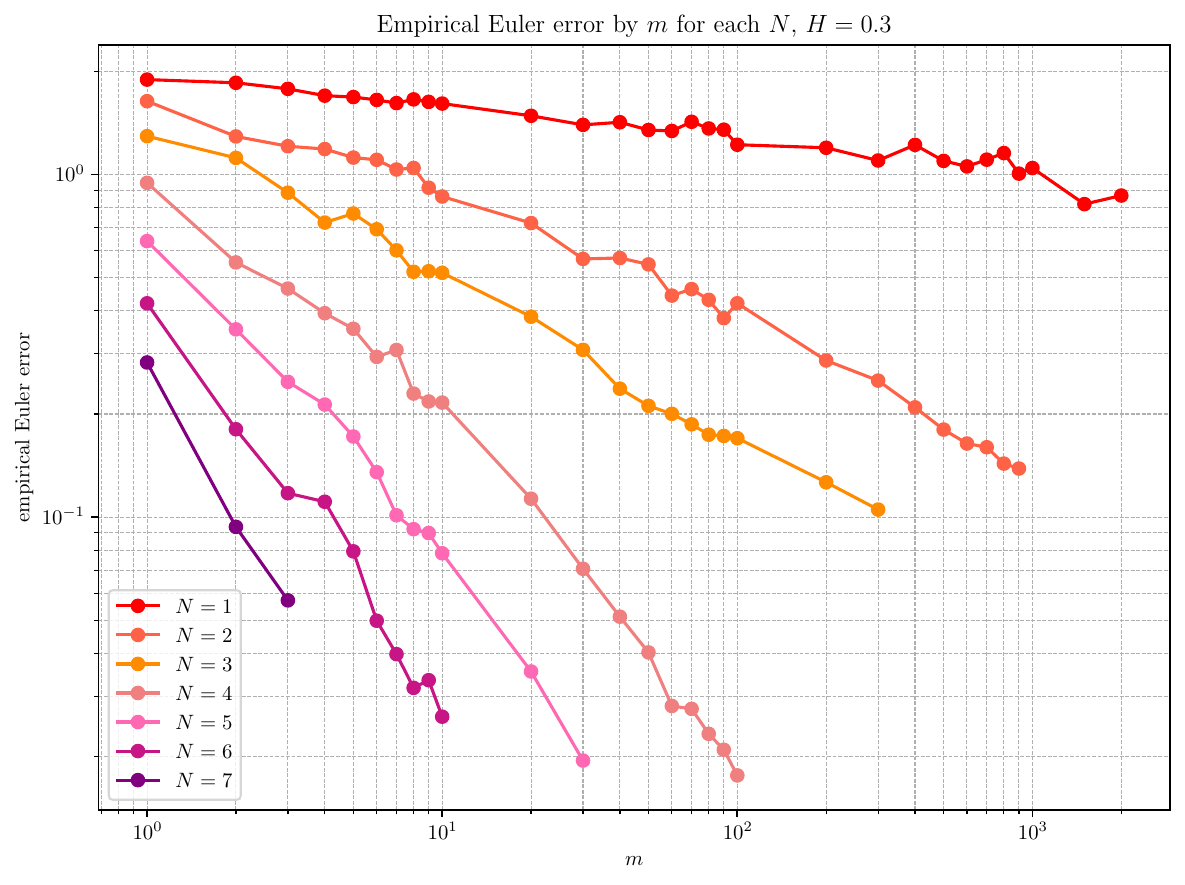}
		\end{minipage} &
		\begin{minipage}{0.325\textwidth}
			\centering
			\includegraphics[width=\linewidth]{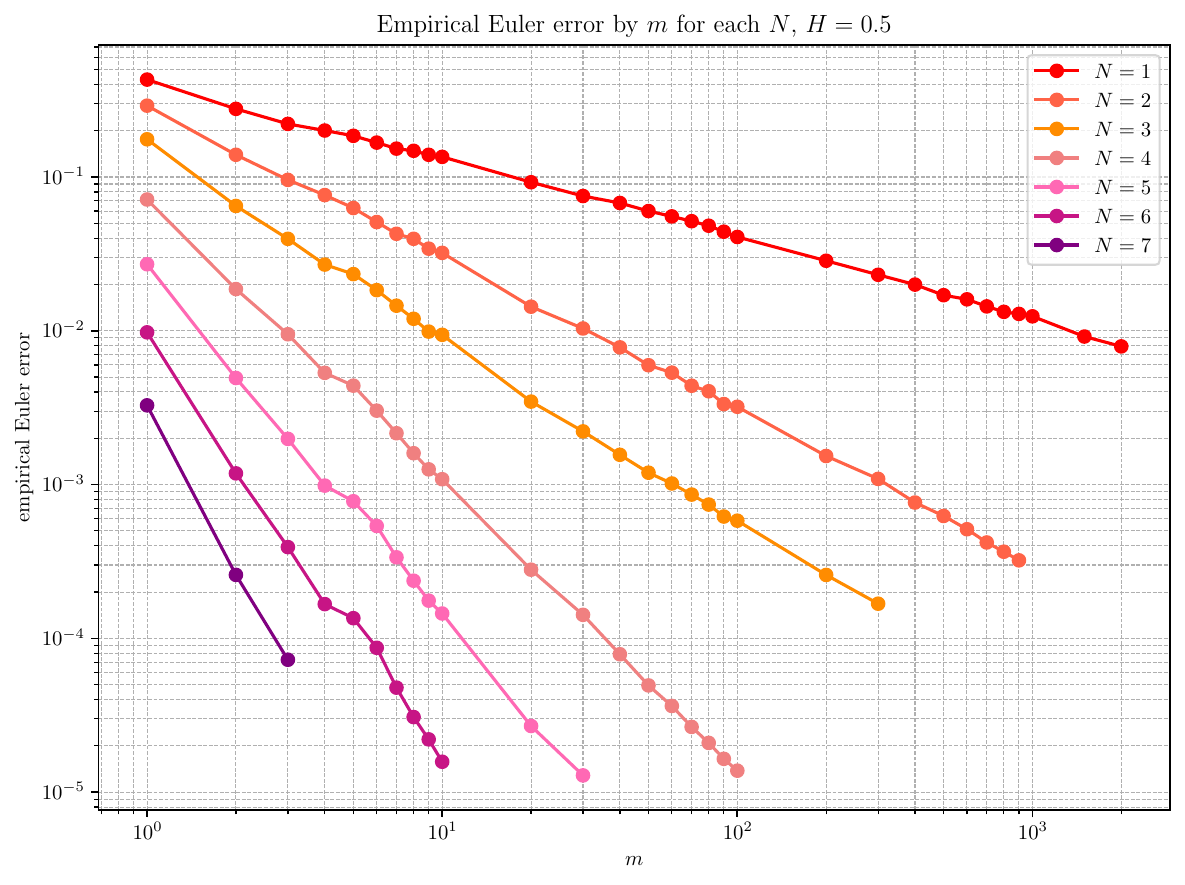}
		\end{minipage} &
		\begin{minipage}{0.325\textwidth}
			\centering
			\includegraphics[width=\linewidth]{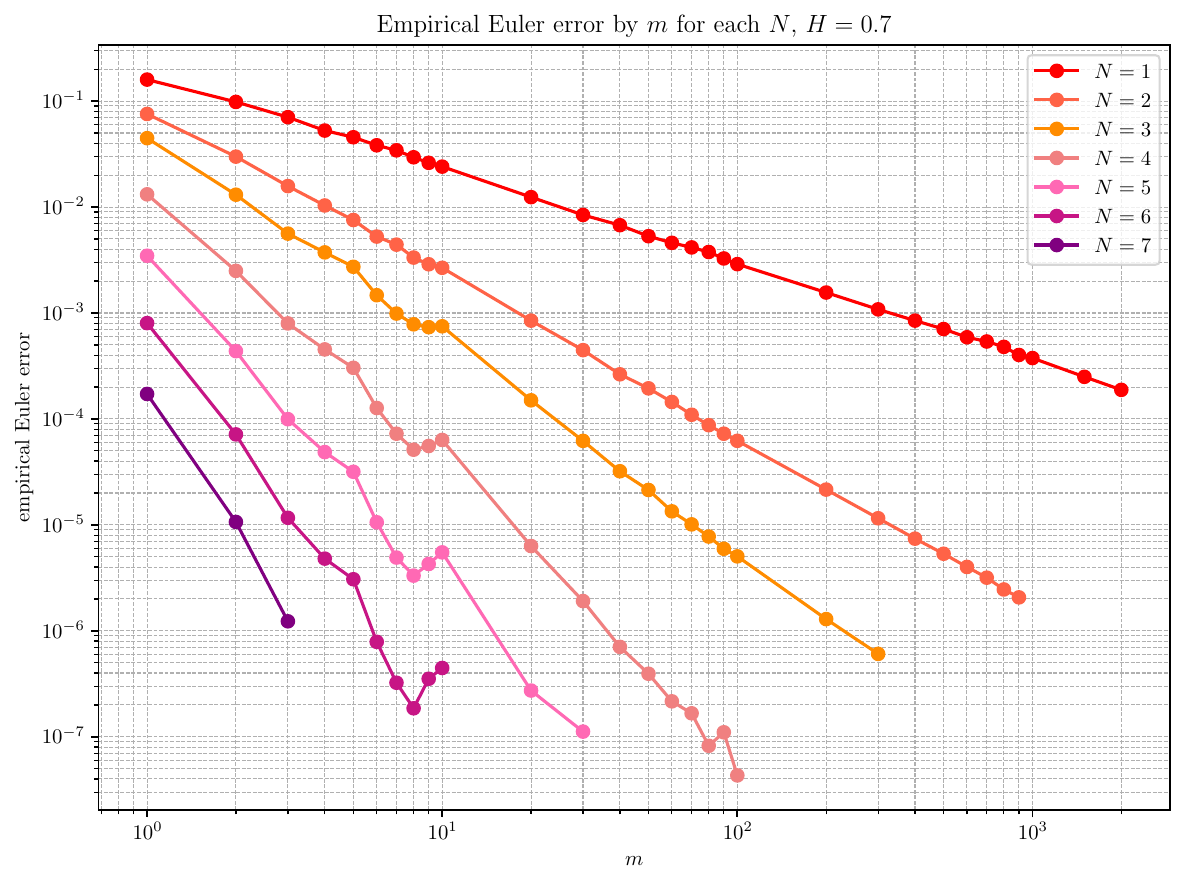}
		\end{minipage}
		\\[-0.1em]
		\begin{minipage}{0.325\textwidth}
			\centering
			\includegraphics[width=\linewidth]{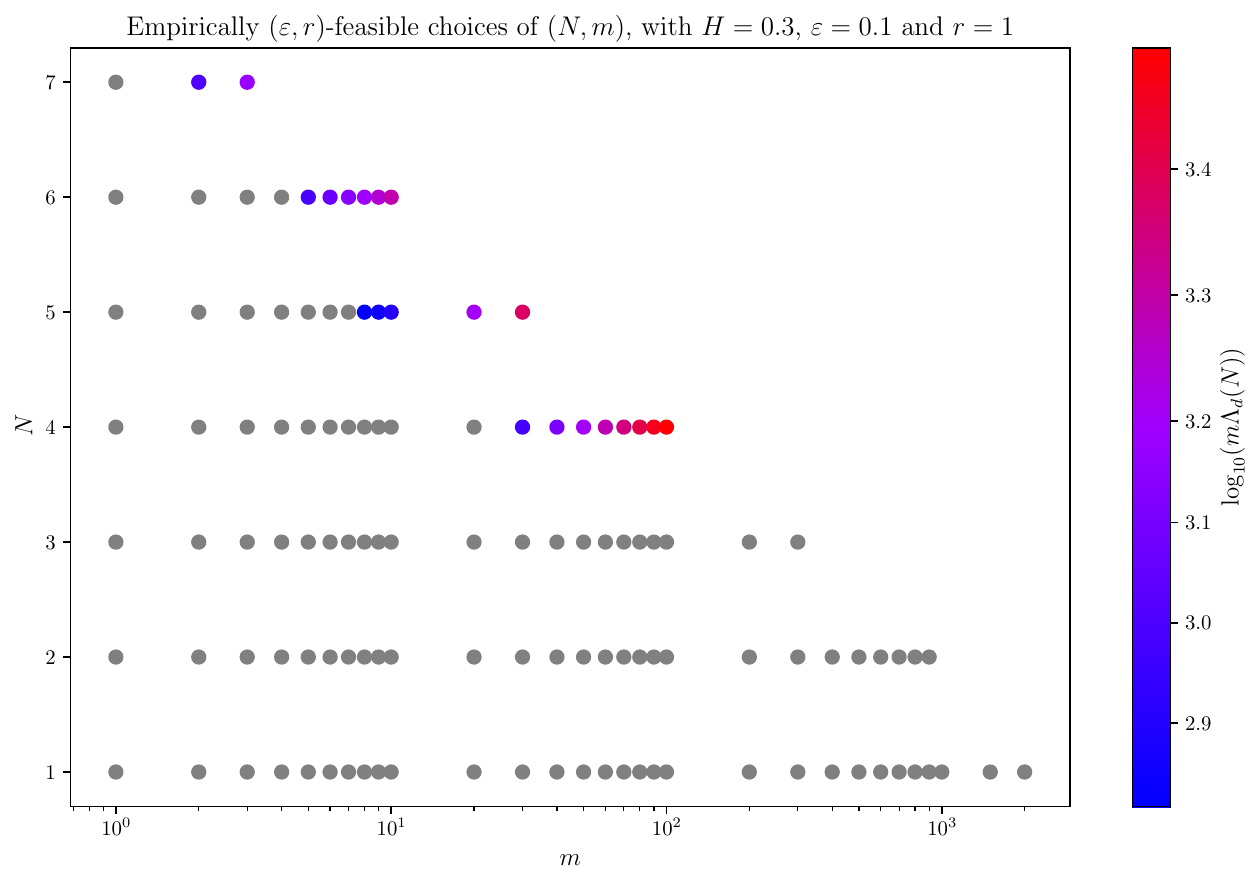}
		\end{minipage} &
		\begin{minipage}{0.325\textwidth}
			\centering
			\includegraphics[width=\linewidth]{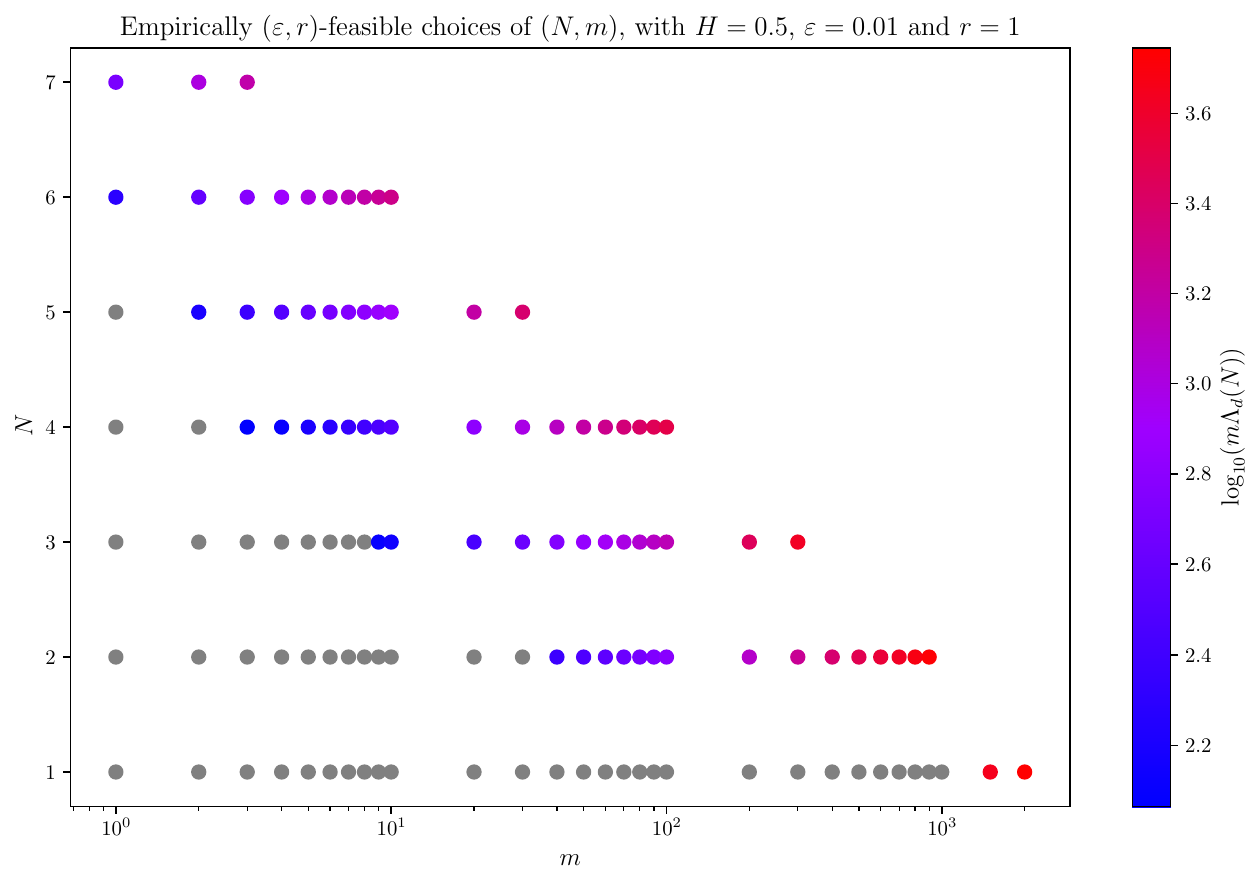}
		\end{minipage} &
		\begin{minipage}{0.325\textwidth}
			\centering
			\includegraphics[width=\linewidth]{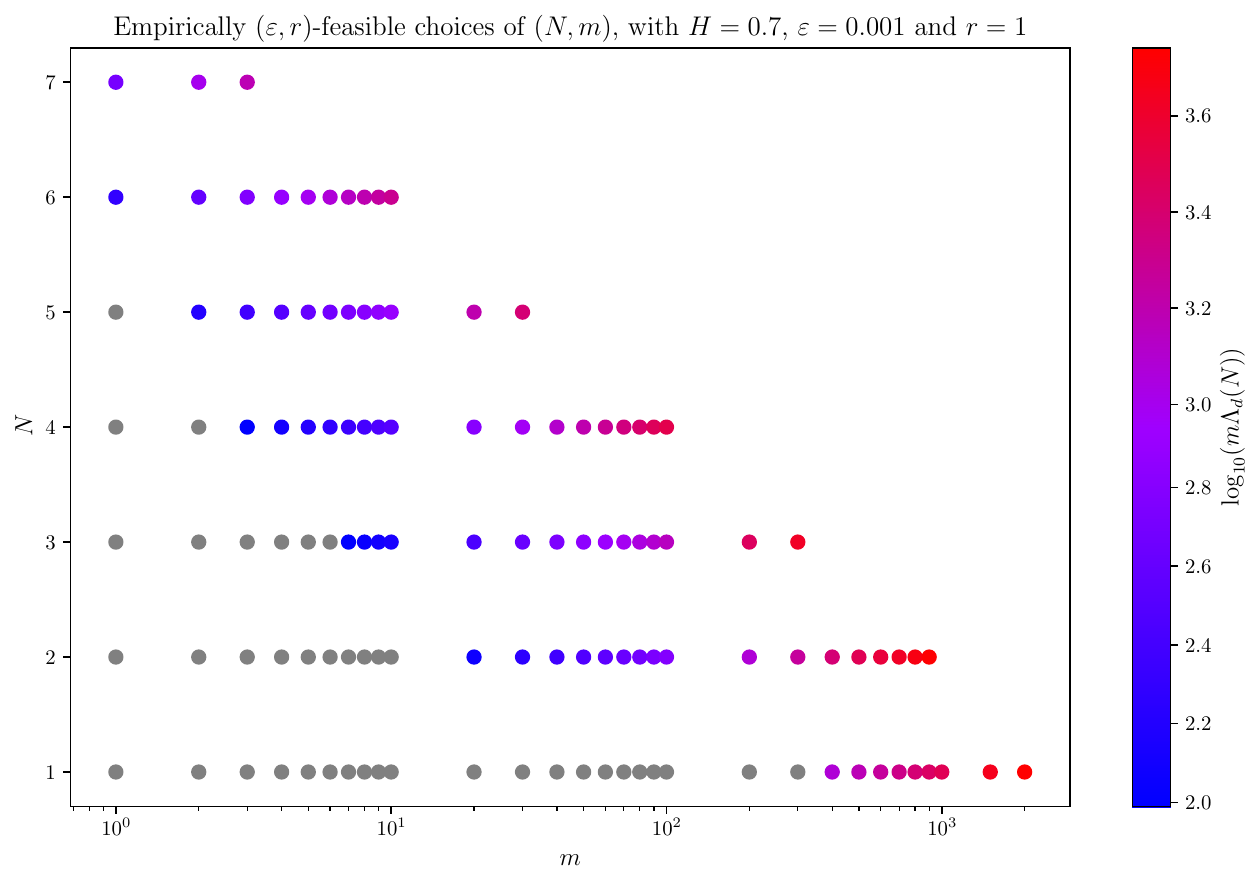}
		\end{minipage}
	\end{tabular}
	
	\vspace{-0.5em}
	\caption{Error plots and storage cost for $(\eps,r)$-admissible choices of $(N,m)$ with $m \Lambda_d(N) \leq 10000$, for $r = 1$ and $(H, \eps) \in \{(0.3,0.1), (0.5, 0.01), (0.7,0.001)\}$.}
	\label{fig:six_figures}
\end{figure}

\begin{figure}[h!]
	\centering
	\setlength{\tabcolsep}{1pt}
	\renewcommand{\arraystretch}{0}
	
	```
	\begin{tabular}{@{}ccc@{}}
		\begin{minipage}{0.32\textwidth}
			\centering
			\includegraphics[width=\linewidth]{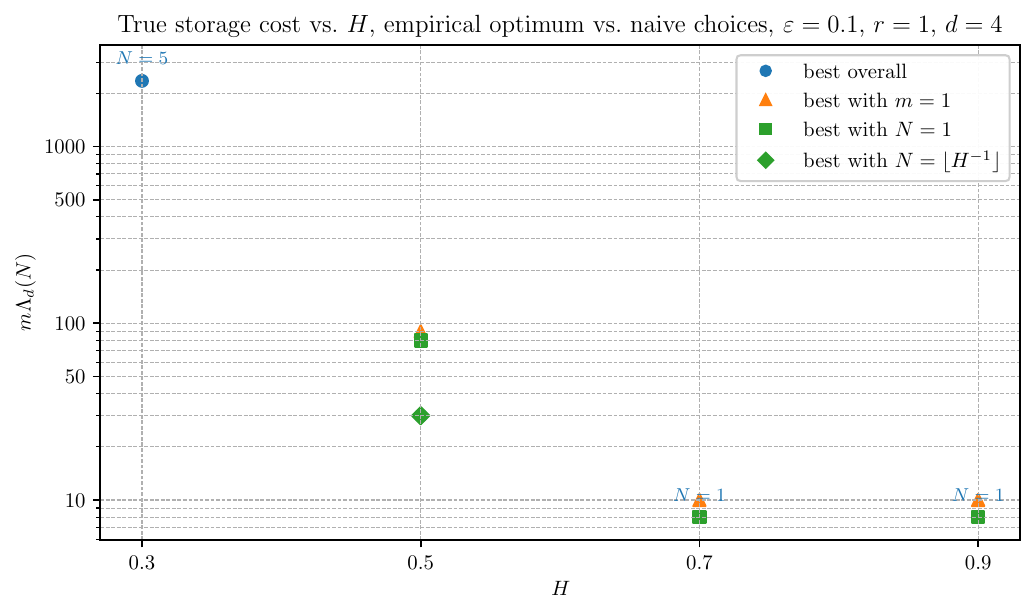}
		\end{minipage} &
		\begin{minipage}{0.32\textwidth}
			\centering
			\includegraphics[width=\linewidth]{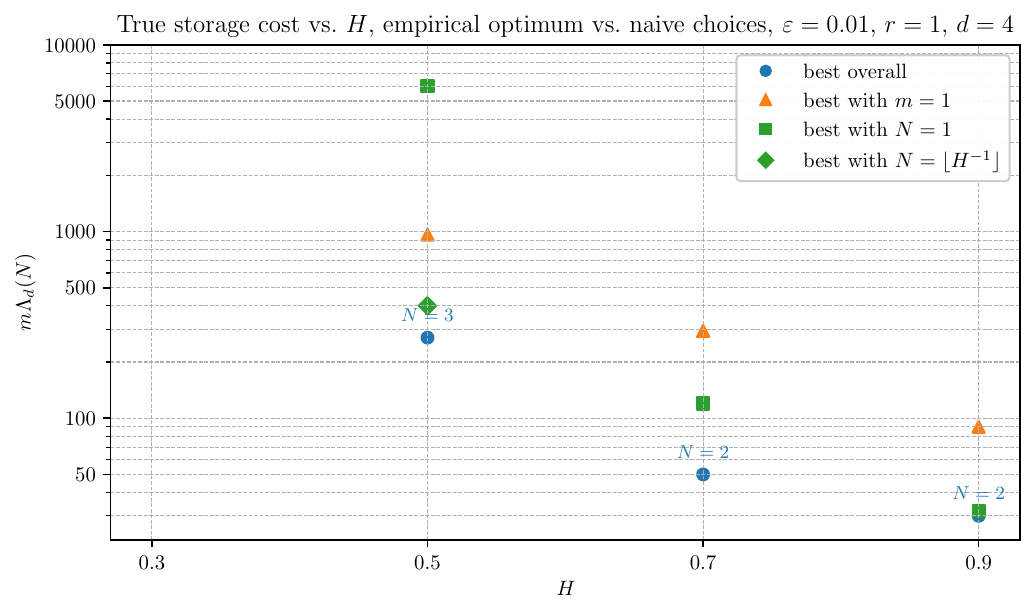}
		\end{minipage} &
		\begin{minipage}{0.32\textwidth}
			\centering
			\includegraphics[width=\linewidth]{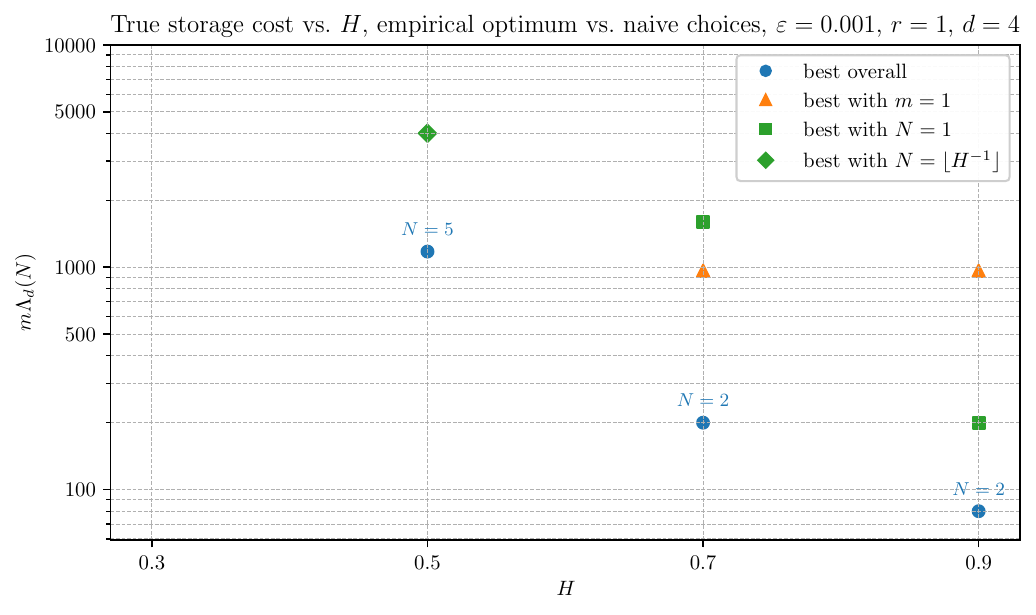}
		\end{minipage}
		\\[-0.1em]
		\begin{minipage}{0.32\textwidth}
			\centering
			\includegraphics[width=\linewidth]{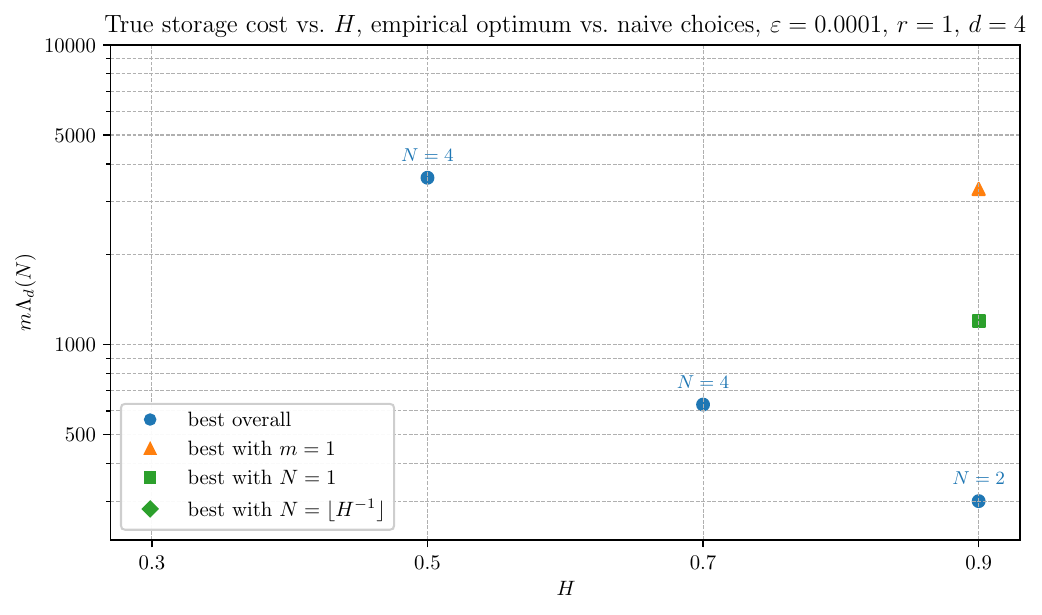}
		\end{minipage} &
		\begin{minipage}{0.32\textwidth}
			\centering
			\includegraphics[width=\linewidth]{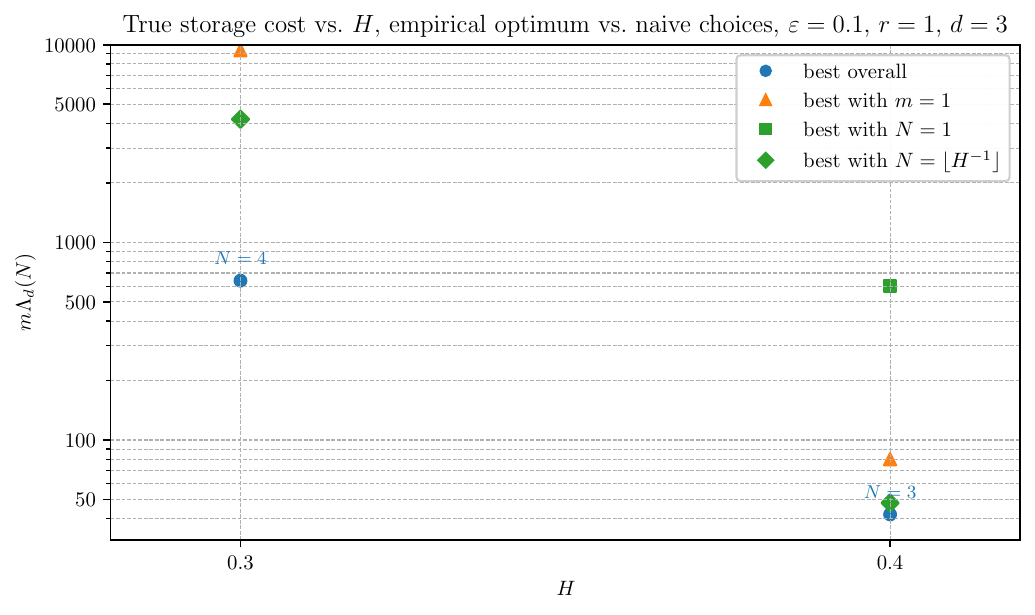}
		\end{minipage} &
		\begin{minipage}{0.32\textwidth}
			\centering
			\includegraphics[width=\linewidth]{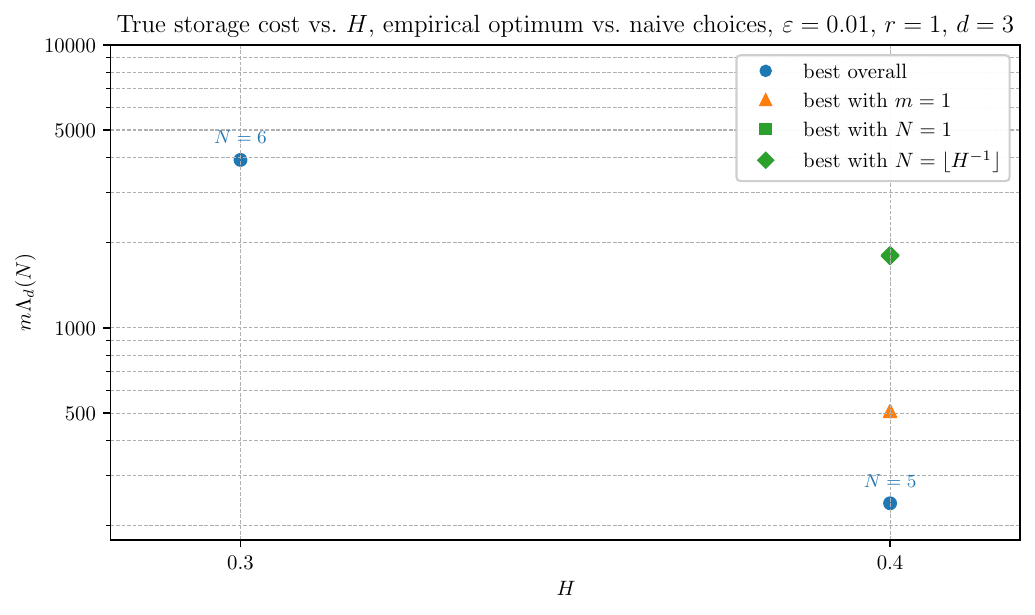}
		\end{minipage}
	\end{tabular}
	
	\vspace{-0.5em}
	\caption{Storage cost plotted vs.\ $H$ for $(\eps,r)$-admissible choices of $(N,m)$ with $m \Lambda_d(N) \leq 10000$. We consider both $d = 4$ for a broader range of $H$ and in the last two plots focus on $d = 3$ for $H = 0.3,0.4$ (lowering the dimension with fixed storage budget allows for higher $N, m$). We plot both the empirical optimum vs.\ that of the naive choices with $N = 1$ or $m = 1$, as well as the choice $N = \lfloor H^{-1} \rfloor$ which is important from the point of view of rough path theory. If a label is not present it means no $(\eps,r)$-admissible choice of $(N,m)$ is present.}
	\label{fig:four_figures}
\end{figure}

\bibliographystyle{alpha} 
\bibliography{refs}\addcontentsline{toc}{chapter}{References}

\end{document}